\theoremstyle{plain}
\newtheorem{theorem}{Theorem}
\newtheorem{conjecture}{Conjecture}
\newtheorem*{conjecture*}{Conjecture}
\newtheorem{lemma}[theorem]{Lemma}
\newtheorem{cor}[theorem]{Corollary}
\newtheorem{rem}[theorem]{Remark}
\newtheorem{proposition}[theorem]{Proposition}
\newtheorem*{theorem*}{Theorem}
\theoremstyle{remark} 
\theoremstyle{remark}
\newcommand\numberthis{\addtocounter{equation}{1}\tag{\theequation}}
\def\F{\mathbb{F}}
\def\C{\mathcal{C}}
\newcommand\Osq{\mathbin{\text{\scalebox{1.1}{$\square$}}}}
\title{Minimum number of distinct eigenvalues of distance-regular and signed Johnson graphs}
\author[]{Shaun Fallat \and Himanshu Gupta \and Allen Herman \and Johnna Parenteau}
\address[S.~Fallat]{Department of Mathematics and Statistics$,$ College West 307$.$14, 3737 Wascana Parkway$,$ University of Regina$,$ SK S$4$S $0$A$2,$ Canada}
\email{shaun.fallat@uregina.ca}
\address[H.~Gupta]{Department of Mathematics and Statistics$,$ College West 307$.$14, 3737 Wascana Parkway$,$ University of Regina$,$ SK S$4$S $0$A$2,$ Canada}
\email{himanshu.gupta@uregina.ca}
\address[A.~Herman]{Department of Mathematics and Statistics$,$ College West 307$.$14, 3737 Wascana Parkway$,$ University of Regina$,$ SK S$4$S $0$A$2,$ Canada}
\email{allen.herman@uregina.ca}
\address[J.~Parenteau]{Department of Mathematics and Statistics$,$ College West 307$.$14, 3737 Wascana Parkway$,$ University of Regina$,$ SK S$4$S $0$A$2,$ Canada}
\email{parenjoh@uregina.ca}
\date{}
\begin{document}
\begin{abstract}
We study the minimum number of distinct eigenvalues over a collection of matrices associated with a graph. Lower bounds are derived based on the existence or non-existence of certain cycle(s) in a graph. A key result proves that every Johnson graph has a signed variant with exactly two distinct eigenvalues. We also explore applications to weighing matrices, linear ternary codes, tight frames, and compute the minimum rank of Johnson graphs. Further results involve the minimum number of distinct eigenvalues for graphs in association schemes, distance-regular graphs, and Hamming graphs. We also draw some connections with simplicial complexes and higher-order Laplacians.
\end{abstract}

\keywords{Eigenvalues of graphs, strongly regular graphs, distance regular graphs, Johnson graph, Hamming graph, minimum rank, orthogonal matrix, weighing matrices, association schemes.}

\subjclass[2010]{Primary 05C50, 05E30; Secondary 05C22, 15A18.}
\maketitle

\section{Introduction}
Let $G$ be a graph with vertex set $V(G)$ and edge set $E(G)$. Typically, we use standard terminology from graph theory (see, for e.g., \cite{bollobas2013modern}). We refer to $n = |V(G)|$ and $m = |E(G)|$ as the order and the size of $G$, respectively. All the graphs we consider are simple (i.e., undirected, without loops and multiple edges). A graph is called \emph{$k$-regular} if every vertex has exactly $k$ neighbors. The \emph{adjacency matrix} $A(G)$ of a graph $G$ has its rows and columns indexed by $V(G)$, where the $(u,v)$-entry is $1$ if there is an edge between $u$ and $v$, and $0$ otherwise. Since $A(G)$ is a real symmetric matrix, all of its eigenvalues are real. These eigenvalues are closely connected to important combinatorial parameters and are useful in various situations (see, for e.g., \cite{brouwer2011spectra,GR2001}). 

A \emph{signature} on a graph $G$ is a function $\sigma: E(G) \to \{1,-1\}$. A \emph{signed graph}, denoted by $\dot{G} = (G,\sigma)$, is a graph $G$ together with a signature $\sigma$. For basic results and open problems in the theory of signed graphs, see \cite{belardo2019open,zaslavsky1982signed} and the references therein. The \emph{adjacency matrix} $A(\dot{G})$ of a signed graph $\dot{G} = (G,\sigma)$ has its rows and columns indexed by $V(G)$, where the $(u,v)$-entry is equal to $\sigma(\{u,v\})$ if there is an edge between $u$ and $v$, and $0$ otherwise. In this paper we refer $A(\dot{G})$ as a \emph{signed adjacency matrix} of the underlying graph $G$. Notice that the conventional adjacency matrix can also be interpreted as a signed adjacency matrix of the graph $G$ with all edges have positive signs. Let $\dot{\mathcal{S}}(G)$ denotes the set of all signed adjacency matrices of a graph $G$. Note that $|\dot{\mathcal{S}}(G)| = 2^{|E(G)|}$. Let $q(M)$ denote the number of distinct eigenvalues of a matrix $M$. We denote the minimum number of distinct eigenvalues of a graph $G$ among all the signed adjacency matrices of graph $G$ as
\begin{align*}
    \dot{q}(G) := \min\{q(M)|M\in \dot{\mathcal{S}}(G)\}.
\end{align*}

An important problem in spectral graph theory is to identify graphs that have an associated matrix with exactly two distinct eigenvalues, where the matrix is chosen from a fixed set of matrices. One instance of this problem is to characterize graphs $G$ such that $\dot{q}(G) = 2$ (see, for e.g., \cite[Problem 3.9]{belardo2019open}). This problem is significant, as demonstrated by Huang's work in \cite{huang2019induced}. Huang constructed a signed adjacency matrix of the $d$-dimensional hypercube whose eigenvalues are $\pm\sqrt{d}$, each with multiplicity $2^{d-1}$, leading to a breakthrough proof of the Sensitivity Conjecture in theoretical computer science. Another noteworthy connection is with equiangular lines. A signed adjacency matrix of a complete graph corresponds to the Seidel matrix of a graph whose edges are the negative edges of the complete graph. Seidel matrices, particularly those with two distinct eigenvalues, have been extensively studied and are linked to equiangular lines (see, for e.g., \cite[Chapter 11]{GR2001}). 

Let $G$ be a graph such that $\dot{q}(G) = 2$. We note a few key results about such graphs. First, $G$ must be a regular graph  (see \cite{ramezani2015signed}). All such graphs $G$ of degree at most $5$ have been characterized in \cite{hou2019signed,stanic2020spectra,stanic2022signed}, and those with order at most $11$ are presented in \cite{stanic2020spectra}. For results concerning if $G$ is triangle-free we refer to \cite{ghasemian2017signed}. Additionally, many other intriguing results and sporadic constructions can be found in \cite{ramezani2022some, stanic2020decomposition,stanic2024linear}.

Consider a significantly larger set of matrices compared to $\dot{\mathcal{S}}(G)$. Let $\mathcal{S}(G)$ denotes the set of all real symmetric $n \times n$ matrices $A = [a_{ij}]$ where $a_{ij} = 0$ if and only if $\{v_i,v_j\}$ is not an edge in $G$, and the diagonal entries $a_{ii}$ can take any value. The \emph{inverse eigenvalue problem} for a graph $G$ (IEPG) asks to determine all possible spectra of matrices in $\mathcal{S}(G)$ (see, for e.g., \cite{barrett2020inverse,hogben2022inverse}). Characterizing the complete spectrum of a given graph is a very challenging problem. Numerous intriguing variants of this problem have been extensively studied. Notable examples include the minimum number of distinct eigenvalues, minimum rank, and multiplicity lists.

The \emph{minimum number of distinct eigenvalues} for a graph $G$ is
\begin{align*}
   q(G) := \min\{q(M)|M \in \mathcal{S}(G)\}.
\end{align*}
It is easy to check that $q(G) = 1$ if and only if $G$ is an empty graph. The parameter $q(G)$ has been studied in \cite{bjorkman2017applications,leal2002minimum,levene2019nordhaus,levene2022orthogonal}. The connected graphs $G$ with $q(G)= n$ or $n-1$ have been characterized (see \cite{barrett2015generalizations}). On the other extreme, the graphs $G$ with $q(G) = 2$ have no forbidden subgraph characterization, as implied by \cite[Theorem 5.2]{ahmadi2013minimum}.  However, it is known that $q(G) = 2$ if and only if there exists $M\in \mathcal{S}(G)$ such that $M^2 = I$, i.e., there is an orthogonal matrix $M$ in $\mathcal{S}(G)$ (see \cite{ahmadi2013minimum}). Thus, studying graphs with $q(G) = 2$ is equivalent to studying zero patterns of $n \times n$ symmetric orthogonal matrices. A connected graph $G$ on $n$ vertices with $q(G) = 2$ has at least $2n-4$ edges (see \cite{barrett2023sparsity}). The regular graphs $G$ with $q(G) = 2$ and of degree at most four are characterized in \cite{barrett2023regular}. More graphs with $q(G) = 2$ have been studied in \cite{barrett2023regular,chen2014undirected}, and \cite{furst2020tight}. Since $\dot{\mathcal{S}}(G) \subset \mathcal{S}(G)$ so $q(G)\leq \dot{q}(G)$. Two key results on $q(G)$ for a general graph $G$ are presented in Section \ref{sec:general_graphs} of this paper. Theorem \ref{thm_about_set_of_cycles} provides lower bounds on $q(G)$ based on the existence of specific cycles in $G$. The proof involves associating polynomials with $G$ and determining whether a solution with all variables non-zero exists. Theorem \ref{thm:triangle_free_non_bipartite_q>2} shows that if $G$ is of odd order, non-bipartite, and triangle-free, then $q(G) \geq 3$. We emphasize that this paper primarily focuses on the real field. However, when it is necessary to consider the complex field, we will use the notations $\mathcal{S}_{\mathbb{C}}(G)$ and $q_{\mathbb{C}}(G)$, etc. 

A graph of order $n$ is said to be {\em strongly-regular} with parameters $(n, k, \lambda, \mu)$ if it is $k$-regular, every pair of adjacent vertices have $\lambda$ common neighbors, and every pair of distinct nonadjacent vertices have $\mu$ common neighbors (see, for e.g., \cite[Chapter 9]{brouwer2011spectra}). A connected graph with diameter $d$ is called {\em distance-regular} if, for any two vertices $u,v$ at distance $i$, there are precisely $c_i$ neighbors of $v$ at distance $i-1$ from $u$ and precisely $b_i$ neighbors at distance $i+1$ from $u$, where $0\leq i \leq d$ (see, for e.g., \cite[Chapter 4]{BCN}). Note that a strongly-regular graph with parameters $(n,k,\lambda,\mu)$, $\mu > 0$ is a distance-regular graph of diameter $2$ with intersection array $\{k,k-1-\lambda;1,\mu\}$, and vice-versa. 

The adjacency matrix $A(G)$ of a graph $G$ with diameter $d$ has at least $d+1$ distinct eigenvalues. For distance-regular graph with diameter $d$, the above bound is tight, that is, it has exactly $d+1$ distinct eigenvalues. This implies the bound $q(G) \leq d+1$ for a distance-regular graph $G$ with diameter $d$. A natural question is to determine the distance-regular graphs for which the above bound is strict or tight. Thus, in the case of a strongly-regular graph, the problem is to determine whether $q(G)$ equals $2$ or $3$, as posed in \cite[Section 8]{ahmadi2013minimum}. The problem of determining the distance-regular graphs with two distinct eigenvalues is posed as \cite[Problem 6.4]{barrett2023regular}.  Our main focus in this paper is to study $q(G)$ for distance-regular graphs. A distance-regular graph is a specific case of the broader concept of association schemes. In Section \ref{sec:assoc_schemes}, we investigate $q(G)$ for graphs within an association scheme. The main result, Theorem \ref{thm:sym_scheme_idem}, provides a criteria for identifying a graph $G$ with $q(G) = 2$ using the idempotents of the Bose-Mesner algebra (see Section \ref{sec:assoc_schemes} for definitions). Additionally, Theorem \ref{thm_q_of_conjuagacy_class_scheme} identifies certain normal Cayley graphs with $q(G) = 2$ based on a theorem of Burnside. In Section \ref{sec:drgs}, we apply these results to derive four propositions regarding distance-regular graphs. Section \ref{sec:small_drg} includes Table \ref{table1}, summarizing $q(G)$ for several small well-known distance-regular graphs.

The two well-known families of distance-regular graphs are the Johnson graph and the Hamming graph. Let $n,d \in \mathbb{N}$ with $n\geq d+1$, and define $[n]:= \{1,2,\ldots,n\}$. The \emph{Johnson graph} $J(n,d)$ is the graph whose vertices are the $d$-subsets of $[n]$, where two subsets are adjacent when their intersection has size $d-1$. For the \emph{Hamming graph} $H(d,n)$, let $n,d \in \mathbb{N}$ with $n\geq 2$, and define $Y_n :=\{0,1,\ldots,n-1\}$. The vertex set of $H(d,n)$ is $Y_n^d$, where two $d$-tuples are adjacent if and only if they differ in exactly one coordinate. The Johnson graph and the Hamming graph have been useful in various areas, including coding theory, design theory, Ramsey theory, and other branches of combinatorics (see, for e.g., \cite{GMbook,Macwilliams1977}). Results regarding $q(G)$ for the Johnson graph are discussed in Section \ref{section_Johnson}, while those for the Hamming graph are presented in Section \ref{section_Hamming}.

Note that the Johnson graph $J(n,2)$ is the line graph of complete graph $K_n$. It is known that there exists a signed adjacency matrix of the line graph of $K_n$ with exactly two distinct eigenvalues (see for e.g., \cite[Theorem 4.3]{stanic2020spectra} and \cite[Proposition 3.1]{ramezani2022some}). In Theorem \ref{Main_thm_1}, we prove that this holds true for all the Johnson graphs that $q(J(n,d)) = \dot{q}(J(n,d)) =2$. We observe that Johnson graphs are linked to certain simplicial complexes. In the theory of simplicial complexes, the higher-order Laplacian operators play a crucial role. In Section \ref{sec:simp_comp}, we include the relevant preliminaries from simplicial complexes and shows how these tools can also be used to establish lower bounds on $q(G)$ for certain graphs derived from a simplicial complex.

Another important problem in the IEPG is determining the minimum rank of a graph. The minimum rank and positive semidefinite minimum rank of a graph $G$ are defined as follows:
\begin{align*}
    mr(G) &:= \min\{\text{rank}(M)|M\in \mathcal{S}(G)\}\\
    mr_+(G) &:= \min\{\text{rank}(M)|M\in \mathcal{S}(G) \text{ and } M \text{ is positive semidefinite}\}.
\end{align*}
Clearly, $mr(G)\leq mr_+(G)$. The parameters $mr(G)$ and $mr_+(G)$ have been studied in various works (see, for e.g., \cite{mr2, mr1, mr5, mr6, mr3, mr4}). In Theorem \ref{main_thm_2}, we prove that for Johnson graphs $J(n,d)$, we have $mr(J(n,d)) = mr_+(J(n,d)) = \binom{n-2}{d-1}$.
Notably, Theorem \ref{main_thm_2} for $d=2$ follows from \cite[Theorem 3.18]{aim2008zero} and \cite[Theorem 3.1.31]{peters2012positive}. 

As mentioned earlier, the construction of a signed adjacency matrix for $H(d, 2)$ in \cite{huang2019induced} shows that $q(H(d, 2)) = 2$ for all $d \geq 2$ (the same construction is also established in \cite[Corollary 6.9]{ahmadi2013minimum}). In Section \ref{section_Hamming}, we offer a new perspective on this construction, focusing on graphs associated with the maximum cliques of a complete $d$-partite graph where each part has size $2$. It follows from \cite[Lemma 3.3]{barrett2023regular} that $q(H(2, n)) = 3$ for all $n \geq 3$. In Corollary \ref{cor:q(hamming)>3}, we demonstrate that the hypercubes $H(d,2)$ are the only Hamming graphs with exactly two distinct eigenvalues, meaning $q(H(d,n))\geq 3$ for all $n\geq 3$ and $d\geq 2$. However, we suspect that  $q(H(d, n)) = d + 1$ holds for all $d\geq 2$ and $n\geq 3$. Accordingly, we propose Conjecture \ref{conj_about_hamming} in Section \ref{section_Hamming}, which, if proven, would support this suspicion using Theorem \ref{thm_hamming_induced_graph}. In Theorem \ref{thm_q=2_for_tensor_of_complete_even_graph} and Theorem \ref{thm_q=2_for_complement_of_H(d,q,d)}, we demonstrate that certain distance-$d$ graphs of $H(d, n)$ and their complements have two distinct eigenvalues. In Theorem \ref{thm_q=2_for_hypercube_scheme}, we prove that the complements of certain hypercubes, along with other specific graphs, have exactly two distinct eigenvalues.

A \emph{weighing matrix} of weight $w$ and order $n$ is a square $n\times n$ matrix $A$ over $\{0,-1,1\}$ such that $AA^T = wI_n$. Two notable special cases include the Hadamard matrices of weight and order $n$, and the conference matrices of weight $n-1$ and order $n$. The weighing matrices have been extensively studied in design theory and coding theory (see, for e.g., \cite{koukouvinos1997weighing}). We provide a construction of a weighing matrix of weight $d^2$ and order $\binom{2d}{d}$ for any $d\geq 2$ (see Proposition \ref{prop_about_Weighing_matrices}). This construction is detailed in Section \ref{sec:ancillary_results}, where we also present several other results that stem from our findings on Johnson graphs. Specifically, we explore connections with weighing matrices in Section \ref{sec:weighing}, linear ternary codes in Section \ref{sec:tern_codes}, tight frame graphs in Section \ref{sec:tight_frames}, and maximum degree of certain induced subgraphs of Johnson graphs in Section \ref{sec:sensitivity}. 

The paper is organized as follows: Section \ref{sec:prelim} presents the preliminary definitions and results necessary for the subsequent sections. In Section \ref{sec:general_graphs}, we discuss results related to general graphs. Section \ref{sec:graphs_special_properties} focuses on graphs with specific properties and is divided into three subsections: Section \ref{sec:assoc_schemes} covers association schemes, Section \ref{sec:drgs} addresses distance-regular graphs, and Section \ref{sec:simp_comp} explores simplicial complexes. Section \ref{section_Johnson} is dedicated to Johnson graphs, where we first present our results and then provide proofs in Section \ref{sec:reg_Johnson}. Section \ref{sec:ancillary_results} discusses various applications of our findings. In Section \ref{section_Hamming} we examine Hamming graphs. Finally, we conclude with a summary and a discussion of related open questions in Section \ref{sec:summary}.

\section{Preliminaries}\label{sec:prelim}
Let $G$ be a graph with $n$ vertices, $m$ edges, and diameter $d$. The shortest path distance between two vertices $u$ and $v$ is denoted by $d_G(u,v)$. For $0\leq j \leq d$, the \emph{distance-$j$ graph} $G_j$ is defined on the same vertex set as $G$, with two vertices $u$ and $v$ being adjacent if and only if $d_G(u,v)=j$. The adjacency matrix of $G_j$, denoted $A_j$, is called the \emph{distance-$j$ matrix} of $G$. Specifically, $A_j(u,v) = 1$ if $d_{G}(u,v) = j$ and $0$ otherwise. Let $m_j$ denote the number of edges in $G_j$ (so $m_1 = m$). Let $\overline{G}$ denote the complement graph of $G$. For two graphs $G$ and $H$ with the same vertex set $V$, the graph $G\cup H$ represents the graph whose vertex set is $V$ and the edge set is $E(G)\cup E(H)$. We denote the complete and the cycle graphs of order $n$ by $K_n$ and $C_n$, respectively. Let $K_{d \times n}$ denote the \emph{complete $d$-partite graph} where each part consists of $n$ vertices. The graph $C_3$ is referred to as the triangle. A graph is said to be \emph{triangle-free} if it contains no $C_3$. A pair of vertices in a graph $G$ are called \emph{antipodal vertices} if they are farthest apart.

The \emph{Cartesian product} $G\Osq H$ of two graphs $G$ and $H$ is a graph with vertex set $V(G\Osq H) = V(G) \times V(H)$, and vertices $(u_1,v_1)$ and $(u_2,v_2)$ are adjacent if and only if either $u_1 = u_2$ and $v_1$ is adjacent to $v_2$ in $H$, or $v_1 = v_2$ and $u_1$ is adjacent to $u_2$ in $G$. The tensor product of graphs $G$ and $H$, denoted by $G \times H$, has the vertex set $V(G\times H) = V(G) \times V(H)$, and vertices $(u_1,v_1)$ and $(u_2,v_2)$ are adjacent if and only if $u_1$ is adjacent to $u_2$ in $G$ and $v_1$ is adjacent to $v_2$ in $H$. 

Let $\dot{G} = (G,\sigma)$ be a signed graph. The \emph{degree} $k(v)$ of a vertex $v$ in $\dot{G}$ is the degree of $v$ in the underlying graph $G$. The \emph{positive degree} $k_+(v)$ is the number of positive edges incident to vertex $v$. Similarly, the \emph{negative degree} $k_{-}(v)$ is the number of negative edges incident to vertex $v$. The graph whose vertex set is same as that of $G$ and whose edge set consists of all the positive (or negative) edges of $\dot{G}$ is denoted by $G_+$ (respectively by $G_{-}$). The signed adjacency matrix $A(\dot{G})$ is a real symmetric matrix so all of its eigenvalues are real. We denote them by $\lambda_1(\dot{G})\geq \lambda_2(\dot{G}) \geq \cdots \geq \lambda_{n}(\dot{G})$. If it is clear from the context, then we simply write $\lambda_i$ instead of $\lambda_i(\dot{G})$. Note for a graph $G$ with at least one edge we have $\lambda_1 > 0$ and $\lambda_{n} < 0$ since the trace of $A(\dot{G})$ is zero. The \emph{index} of $\dot{G}$ is its largest eigenvalue $\lambda_1$. The \emph{spectral radius} $\rho(\dot{G})$ is its largest eigenvalue in absolute value, i.e., $\rho(\dot{G}) := \max\{\lambda_1,-\lambda_{n}\}$. Let $\Delta(G)$ denote the maximum degree of a graph $G$. The following result implies that the maximum degree is always bounded below by the index of any signed adjacency matrix of the graph.
\begin{lemma}(\cite[Lemma 2.3]{huang2019induced})\label{lem:huang_max_degree}
Suppose $G$ is an undirected graph of order $n$, and $A$ is a symmetric matrix
whose entries are from the set $\{-1,0,1\}$, with rows and columns are indexed by $V(G)$, and
whenever $u$ and $v$ are non-adjacent in $G$, $A(u,v) = 0$. Then $\Delta(G) \geq \lambda_1(A)$.
\end{lemma}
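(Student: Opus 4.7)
The plan is to use a standard maximum-coordinate Rayleigh quotient argument, which is the natural tool for bounding the top eigenvalue of a symmetric matrix whose entries are controlled row-by-row. The key point is that the hypotheses on $A$ force the $\ell^1$ norm of each row to be bounded by the degree of the corresponding vertex in $G$, and hence by $\Delta(G)$.

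Concretely, the first step is to select a real eigenvector $x$ of $A$ corresponding to $\lambda_1(A)$, which exists because $A$ is real symmetric. Next, pick an index $i$ maximizing $|x_i|$, and, after replacing $x$ by $-x$ if necessary, assume $x_i > 0$ so that $x_i = \|x\|_{\infty}$. Expanding the identity $\lambda_1(A) x_i = (Ax)_i = \sum_{j} A_{ij} x_j$, applying the triangle inequality, and bounding each $|x_j|$ by $x_i$ gives $\lambda_1(A) x_i \leq x_i \sum_{j} |A_{ij}|$; dividing by the positive quantity $x_i$ yields $\lambda_1(A) \leq \sum_{j} |A_{ij}|$.

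The last step is to translate this row-sum bound into a statement about $\Delta(G)$. Because the entries of $A$ lie in $\{-1,0,1\}$ and $A_{ij} = 0$ whenever $\{i,j\} \notin E(G)$, the sum $\sum_{j} |A_{ij}|$ counts at most the neighbors of vertex $i$ in $G$, and is therefore bounded above by $\deg_G(i) \leq \Delta(G)$. The only subtle point is the diagonal: in a simple graph a vertex is not adjacent to itself, so the hypothesis forces $A_{ii} = 0$, and the row-sum really does count only neighbors of $i$ rather than $\deg_G(i) + 1$. With this observation the claimed inequality $\Delta(G) \geq \lambda_1(A)$ follows.

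There is no serious obstacle here; the result is a textbook application of the Perron-style maximum-coordinate argument. The only care needed is in interpreting the hypothesis so that diagonal entries vanish and in ensuring the eigenvector can be chosen real, which is automatic for a real symmetric matrix.
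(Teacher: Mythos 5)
Your argument is correct and is essentially the same maximum-coordinate (Rayleigh-type) argument used in Huang's original proof of this lemma; the paper itself states the result as a citation to \cite[Lemma 2.3]{huang2019induced} without reproducing a proof. Your handling of the diagonal (a vertex is non-adjacent to itself in a simple graph, so $A_{ii}=0$ and the row $\ell^1$-norm is at most $\deg_G(i)\leq\Delta(G)$) is exactly the point that makes the row-sum bound work.
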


A \emph{$2$-lift} $G'$ of graph $G$ is a graph with two vertices (a \emph{fiber}) for each vertex in $V(G)$. Each edge $\{u, v\} \in E(G)$ corresponds to two edges in the $2$-lift: either \emph{parallel edges} $\{(u_0, v_0), (u_1, v_1)\}$ or \emph{cross edges} $\{(u_0, v_1), (u_1, v_0)\}$, where $\{u_0, u_1\}$ and $\{v_0, v_1\}$ are the fibers of $u$ and $v$, respectively. Let us assign  a signature $\sigma$ on $G$ such that $\sigma(\{u, v\}) = 1$ if parallel edges appear in the 2-lift and $\sigma(\{u, v\}) = -1$ if cross edges appear. Conversely, given a signature, we can construct a $2$-lift by including parallel edges for positive edges and cross edges for negative ones. The following is a useful result about the eigenvalues of the adjacency matrix of a $2$-lift (see, e.g., \cite[Lemma 3.1]{bilu2006lifts}). 
\begin{lemma}(\cite[Lemma 3.1]{bilu2006lifts})\label{lem:Bilu_and_Linial}
Let $A(G)$ be the adjacency matrix of a graph $G$, and $A(\dot{G})$ the signed adjacency matrix associated with a $2$-lift $G'$. Then every eigenvalue of $A(G)$ and every eigenvalue of $A(\dot{G})$ are eigenvalues of $A(G')$. Furthermore, the multiplicity of each eigenvalue of $A(G')$ equals the sum of its multiplicities in $A(G)$ and $A(\dot{G})$.
\end{lemma}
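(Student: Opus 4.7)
The plan is to block-diagonalize $A(G')$ via a fixed orthogonal change of basis that separates the ``symmetric'' and ``antisymmetric'' parts of the two fibers.

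First I would order the vertices of the $2$-lift $G'$ so that the pair $\{u_0,u_1\}$ in the fiber over $u \in V(G)$ occupies blocks of indices $\{u, n+u\}$; this presents $A(G')$ in the $2 \times 2$ block form
\[
A(G') \;=\; \begin{pmatrix} B & C \\ C & B \end{pmatrix},
\]
where $B$ records the parallel edges and $C$ records the cross edges. By the construction linking signatures and $2$-lifts, $B$ is the $\{0,1\}$ matrix supported on the positive edges of $\dot G$ and $C$ is the $\{0,1\}$ matrix supported on the negative edges, so $B+C = A(G)$ and $B-C = A(\dot G)$.

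Next I would conjugate $A(G')$ by the orthogonal matrix
\[
P \;=\; \frac{1}{\sqrt{2}}\begin{pmatrix} I_n & I_n \\ I_n & -I_n \end{pmatrix},
\]
which satisfies $P^{-1}=P^{T}=P$. A direct block computation gives
\[
P^{T} A(G')\, P \;=\; \begin{pmatrix} B+C & 0 \\ 0 & B-C \end{pmatrix} \;=\; \begin{pmatrix} A(G) & 0 \\ 0 & A(\dot G) \end{pmatrix}.
\]
Since similarity preserves the spectrum with multiplicity, every eigenvalue of $A(G)$ and every eigenvalue of $A(\dot G)$ appears as an eigenvalue of $A(G')$, and the multiplicity of any $\lambda$ as an eigenvalue of $A(G')$ equals its multiplicity in $A(G)$ plus its multiplicity in $A(\dot G)$, which is exactly the claim.

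The only step that requires any care is verifying that the block decomposition $A(G') = \bigl(\begin{smallmatrix}B & C\\ C & B\end{smallmatrix}\bigr)$ matches the signature convention used in the paper: that positive edges of $\sigma$ lift to parallel edges $\{(u_0,v_0),(u_1,v_1)\}$ and negative edges lift to cross edges $\{(u_0,v_1),(u_1,v_0)\}$. Once this bookkeeping is settled, the remainder is a one-line linear algebra argument, and no further obstacle arises.
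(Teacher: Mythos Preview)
Your argument is correct and is the standard block-diagonalization proof: write $A(G')=\begin{pmatrix}B&C\\ C&B\end{pmatrix}$ with $B+C=A(G)$ and $B-C=A(\dot G)$, then conjugate by $\frac{1}{\sqrt{2}}\begin{pmatrix}I&I\\ I&-I\end{pmatrix}$. The paper does not actually supply its own proof of this lemma; it is quoted as a known result from Bilu and Linial \cite{bilu2006lifts}, and your approach is essentially the one that appears there.
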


We denote the identity matrix of order $s\times s$ by $I_s$, the zero matrix of order $s\times r$ by $O_{s,r}$, and the all one matrix of order $s\times r$ by $J_{s,r}$. If the orders are clear by the context, then we instead write $I$, $J$, and $O$. Let $\text{row}(M)$ and $\text{col}(M)$ denote the row space and the column space of a matrix $M$, respectively. Let $\text{rank}(M)$ and $\text{null}(M)$ denote the rank and nullity of a matrix $M$. The transpose of a matrix $M$ is denoted by $M^T$ and the trace is denoted by $\text{trace}(M)$. \emph{The entrywise product} of matrices $A$ and $B$ is denoted by $A \circ B$. The \emph{Kronecker product} of matrices $A$ and $B$ is denoted by $A \otimes B$. The following classical result is known as \emph{Cauchy’s Interlacing Theorem} (see, e.g., \cite{fisk2005very}). 
\begin{lemma}(Cauchy's interlace Theorem)\label{lem:Cauchy_interlace}
    Let $A$ be a symmetric $n\times n$ matrix, and $B$ be a $r \times r$ principal submatrix of $A$, for some $r < n$. If the eigenvalues of $A$ are $\lambda_1\geq \lambda_2 \geq \cdots \geq \lambda_n$, and the eigenvalues of $B$ are $\mu_1\geq\mu_2 \geq \cdots \geq \mu_r$, then for all $1\leq i \leq r$, $\lambda_i \geq \mu_i \geq \lambda_{i+n-r}$. 
\end{lemma}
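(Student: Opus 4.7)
The plan is to prove this by the Courant--Fischer variational characterization of eigenvalues, namely
\[
\lambda_i(A) \;=\; \max_{\substack{W\le\mathbb{R}^n\\ \dim W=i}}\ \min_{\substack{x\in W\\ x\neq 0}}\frac{x^T A x}{x^T x} \;=\; \min_{\substack{W\le\mathbb{R}^n\\ \dim W=n-i+1}}\ \max_{\substack{x\in W\\ x\neq 0}}\frac{x^T A x}{x^T x},
\]
and the analogous pair of identities for $B$. Since permutation similarity preserves the spectrum, I may assume without loss of generality that $B$ is the leading $r\times r$ principal block of $A$. The key bridge between the two quadratic forms is the embedding $\iota:\mathbb{R}^r\to\mathbb{R}^n$ defined by $\iota(y)=(y,0,\dots,0)^T$, which satisfies $\iota(y)^T A\,\iota(y)=y^T B y$ and $\|\iota(y)\|=\|y\|$, so Rayleigh quotients agree on $\iota(\mathbb{R}^r)$ and $\dim \iota(W')=\dim W'$ for every $W'\le\mathbb{R}^r$.

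For the upper inequality $\mu_i\le \lambda_i$, I use the max-min form. I pick an $i$-dimensional subspace $W'\le\mathbb{R}^r$ on which $\mu_i$ is attained in the max-min formula for $B$. Its image $\iota(W')$ is an $i$-dimensional subspace of $\mathbb{R}^n$ whose minimum Rayleigh quotient with respect to $A$ is exactly $\mu_i$. Taking the maximum over all $i$-dimensional subspaces of $\mathbb{R}^n$ in the formula for $\lambda_i(A)$ then yields $\lambda_i\ge \mu_i$.

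For the lower inequality $\mu_i\ge \lambda_{i+n-r}$, I switch to the min-max form, setting $j=i+n-r$ so that $n-j+1=r-i+1$. I choose an $(r-i+1)$-dimensional subspace $W'\le\mathbb{R}^r$ on which $\mu_i$ is attained in the min-max formula for $B$; then $\iota(W')$ has dimension $r-i+1$ in $\mathbb{R}^n$ and its maximum Rayleigh quotient with respect to $A$ equals $\mu_i$. Since $\lambda_{i+n-r}$ is a minimum over all such subspaces, this particular subspace provides the bound $\lambda_{i+n-r}\le \mu_i$.

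There is no real obstacle here; this is a classical computation once the Courant--Fischer identities are invoked. The only point requiring a moment's care is to line up the two different index conventions correctly, in particular the substitution $j\mapsto n-j+1$ when passing from the max-min to the min-max formula, which produces the shift $i\mapsto i+n-r$ that appears on the right-hand side of the interlacing inequality.
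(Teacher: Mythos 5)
Your proof is correct. Note that the paper does not actually prove this lemma: it is quoted as a classical result with a citation to Fisk's note, so there is no in-paper argument to compare against. Your route via the Courant--Fischer max-min and min-max characterizations is the standard textbook proof, and you have executed it completely: the reduction to the case where $B$ is the leading block is legitimate (permutation similarity preserves both the spectrum and the principal-submatrix structure), the embedding $\iota$ does preserve Rayleigh quotients and dimensions, and the index bookkeeping in the second half is right, since with $j=i+n-r$ one has $n-j+1=r-i+1$, so the image $\iota(W')$ competes in exactly the right dimension class for $\lambda_{i+n-r}$. For what it is worth, the proof in the cited reference is genuinely different in flavor --- it argues through characteristic polynomials and a deformation/continuity argument rather than variational principles --- but your approach is self-contained, arguably more elementary given that Courant--Fischer is assumed, and fully adequate for the way the lemma is used later in the paper (the application in Proposition \ref{Prop_max_deg_ind_John} only needs the single inequality $\mu_1 \geq \lambda_{1+n-r}$, which your second half delivers).
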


An $n\times n$ real  matrix $A$ is called \emph{orthogonal} if and only if $AA^T =  cI_n$ for some positive constant $c\in \mathbb{R}^+$. An $n\times n$ real matrix $A$ is called \emph{orthogonal matrix with zero diagonal} or an $OMZD(n)$, if and only if it orthogonal, its diagonal entries are all zero, and its off-diagonal entries are all nonzero. We note the following two results from \cite{bailey2018orthogonal, bjorkman2017applications}.
\begin{lemma}(\cite[Theorem 3.2]{bailey2018orthogonal})\label{prop_from_Bailey}
    There exists a symmetic $OMZD(n)$ if and only if $n$ is even and $n\neq 4$.  \qed
\end{lemma}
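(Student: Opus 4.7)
The plan is to prove both implications separately.

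\textbf{Necessity.} Suppose $A$ is a symmetric $OMZD(n)$. Then $A^2 = AA^T = cI_n$ for some $c > 0$, so the eigenvalues of $A$ are $\pm\sqrt{c}$. Since the diagonal of $A$ is zero, $\mathrm{trace}(A) = 0$, so $+\sqrt{c}$ and $-\sqrt{c}$ must occur with equal multiplicities, forcing $n$ to be even. To exclude $n = 4$, label the strictly upper-triangular entries $x_{ij}$ for $1 \le i < j \le 4$ (all nonzero by hypothesis) and expand the six off-diagonal relations $(A^2)_{ij} = 0$. In particular, the two relations
$$x_{12}x_{23} + x_{14}x_{34} = 0 \quad \text{and} \quad x_{12}x_{24} + x_{13}x_{34} = 0$$
give, upon dividing, $x_{13}x_{23} = x_{14}x_{24}$; combined with $x_{13}x_{23} + x_{14}x_{24} = 0$ coming from $(A^2)_{12} = 0$, this forces $x_{14}x_{24} = 0$, contradicting nonzeroness.

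\textbf{Sufficiency.} For each even $n \neq 4$, I would exhibit a symmetric $OMZD(n)$ explicitly. The case $n=2$ is handled by $\bigl(\begin{smallmatrix}0 & 1 \\ 1 & 0\end{smallmatrix}\bigr)$, and the case $n=6$ by a symmetric Paley conference matrix of order $6$ (from the prime power $q = 5 \equiv 1 \pmod{4}$). For general even $n \ge 6$, I would use the equivalence, up to positive scalar, between symmetric $OMZD(n)$ and reflections $A = 2P - I$, where $P$ is the orthogonal projection onto an $n/2$-dimensional subspace $V \subset \mathbb{R}^n$ satisfying $(P)_{ii} = \tfrac{1}{2}$ for every $i$ (which encodes the zero-diagonal condition) and $(P)_{ij} \ne 0$ for every $i \ne j$ (which encodes the nonzero off-diagonal condition). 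Such a $P$ has the form $U^TU$ for an $(n/2) \times n$ matrix $U$ with orthonormal rows and columns of length $1/\sqrt{2}$, i.e., an equal-norm tight frame. Equal-norm tight frames of these dimensions exist for every $n \ge 2$ (e.g., built from Hadamard matrices when available, or via recursive block constructions), and one then adjusts within the frame manifold to guarantee that all off-diagonal Gram entries are nonzero.

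The main obstacle is carrying out the sufficiency argument uniformly for every even $n \ge 6$. While equal-norm tight frames of the required dimensions always exist, one must ensure that the conditions $(U^TU)_{ij} \ne 0$ for all $i \ne j$ can simultaneously be achieved. For $n \ge 6$ the ``bad'' locus where some $P_{ij}$ vanishes forms a union of proper subvarieties of the frame manifold, so a generic choice works; by contrast, the necessity computation above shows that for $n = 4$ the nonvanishing conditions are algebraically incompatible with the orthogonality relations, which is precisely why $n = 4$ is the sole exception.
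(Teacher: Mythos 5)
The paper does not prove this lemma at all; it is imported verbatim from Bailey and Craigen \cite{bailey2018orthogonal}, so there is no internal proof to compare against. Judged on its own terms, your necessity direction is correct and complete: the trace/eigenvalue argument forces $n$ even, and the $n=4$ computation is right --- from $(A^2)_{13}=0$ and $(A^2)_{14}=0$ one gets $x_{13}x_{23}=x_{14}x_{24}$, which together with $(A^2)_{12}=x_{13}x_{23}+x_{14}x_{24}=0$ forces a zero entry. The base cases $n=2$ and $n=6$ (symmetric Paley conference matrix from $q=5$) are also fine.

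The sufficiency direction for general even $n\ge 8$, however, has a genuine gap, and it is exactly where all the work in the theorem lies. Your plan reduces the problem to finding a rank-$n/2$ orthogonal projection $P$ with $P_{ii}=\tfrac12$ for all $i$ and $P_{ij}\ne 0$ for all $i\ne j$, and then asserts that the locus where some $P_{ij}$ vanishes is a union of proper subvarieties, so a generic point of the constrained frame variety works. None of the ingredients of that assertion is established: you would need (i) that the real variety of such projections is nonempty and irreducible (or at least that every component meets the complement of each bad locus), (ii) for each pair $(i,j)$ a witness showing $\{P_{ij}=0\}$ is a \emph{proper} subvariety --- which cannot be waved away, since your own $n=4$ computation shows that for some $n$ the vanishing is forced everywhere on the variety --- and (iii) a real-algebraic (not just complex-generic) argument that the complement of the union of bad loci is nonempty. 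Point (ii) is essentially equivalent to the statement being proved. Bailey and Craigen's actual proof avoids all of this by being constructive: they build symmetric $OMZD$'s of every even order $n\neq 4$ explicitly via block and product constructions from small seed matrices (conference matrices and ad hoc examples). As written, your sufficiency argument is a plausible strategy outline rather than a proof, and you correctly flag this yourself as ``the main obstacle.''
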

\begin{lemma}(\cite[Proposition 3.9]{bjorkman2017applications})\label{prop_from_Bjorkman}
    Let $G$ and $H$ be connected graphs. Let $A \in \mathcal{S}(G)$ with a zero diagonal and $B \in \mathcal{S}(H)$ with a zero diagonal. Then $A\otimes B \in \mathcal{S}(G\times H)$ with a zero diagonal. \qed
\end{lemma}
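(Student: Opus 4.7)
The plan is to verify directly from the definition of the Kronecker product that $A \otimes B$ satisfies the three defining properties of membership in $\mathcal{S}(G \times H)$: it is real symmetric, its diagonal is zero, and its off-diagonal zero pattern matches the non-edges of $G \times H$. Symmetry is immediate from $(A\otimes B)^T = A^T \otimes B^T = A \otimes B$, so the content of the argument is contained in checking the sparsity pattern.

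Label the vertices of $G$ by $1,\dots,|V(G)|$ and of $H$ by $1,\dots,|V(H)|$, and index the rows/columns of $A\otimes B$ by ordered pairs $(i,k)$. The entries are $(A\otimes B)_{(i,k),(j,l)} = a_{ij} b_{kl}$. For the diagonal entries, I would observe that $(A\otimes B)_{(i,k),(i,k)} = a_{ii} b_{kk} = 0$ since both $A$ and $B$ have zero diagonals; this gives the zero-diagonal conclusion.

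For the off-diagonal pattern, I would split into cases depending on how the indices $(i,k)$ and $(j,l)$ differ. If $i \neq j$ and $k \neq l$, then $a_{ij} b_{kl} \neq 0$ if and only if $a_{ij} \neq 0$ and $b_{kl} \neq 0$, which by $A\in\mathcal{S}(G)$ and $B\in\mathcal{S}(H)$ is equivalent to $\{i,j\}\in E(G)$ and $\{k,l\}\in E(H)$, precisely the condition for $(i,k)$ and $(j,l)$ to be adjacent in $G\times H$. If instead $i = j$ (so $k \neq l$), then $a_{ii} b_{kl} = 0$ because $a_{ii}=0$, and meanwhile $(i,k)$ and $(i,l)$ are non-adjacent in the tensor product (there is no loop at $i$ in $G$); the symmetric case $k = l$, $i\neq j$ is handled identically. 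Combining these cases shows that the off-diagonal entry $(A\otimes B)_{(i,k),(j,l)}$ vanishes exactly when $\{(i,k),(j,l)\}\notin E(G\times H)$.

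There is no real obstacle here — the lemma is essentially a bookkeeping exercise, and the only subtle point is that the zero diagonals of $A$ and $B$ are exactly what is needed to kill off the two degenerate cases ($i=j$ or $k=l$) where the tensor product has no edge but the entrywise product of a diagonal entry with an off-diagonal entry could a priori be nonzero. Once those cases are handled by the zero-diagonal hypothesis, the three conditions for $A\otimes B \in \mathcal{S}(G\times H)$ with zero diagonal follow.
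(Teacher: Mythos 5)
Your verification is correct and complete: the zero diagonals of $A$ and $B$ are exactly what is needed to handle the degenerate cases $i=j$ or $k=l$, and the remaining case reduces to the definition of adjacency in the tensor product. The paper itself states this lemma only by citation (to Proposition 3.9 of the Bj\"orkman et al.\ reference) without reproducing a proof, and your direct entrywise argument is the standard one underlying that result.
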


The following Proposition \ref{prop:system_of_eqns}, which is proved next, is needed for later use. We begin by reviewing some definitions and introducing the necessary notation. Let $X$ be a finite set of variables, and let $\mathbb{R}[X]$ be the polynomial ring. A \emph{monomial} in $\mathbb{R}[X]$ is a product of variables with non-negative integer exponent. A monomial is called \emph{square-free} if all the variable exponents are at most one. A \emph{perfect-square monomial} is one in which all the exponents are even. Note that $1 \in \mathbb{R}[x]$ is a monomial which is both square-free and perfect-square. Two or more monomials are said to be \emph{coprime} if they do not share any variables with positive exponents. 

Let $F = \{f_1,f_2,\ldots,f_s\} \subset \mathbb{R}[X]$. We denote the set of variables appearing in the polynomials of $F$ by $\Omega(F)$ and the multiplicity of a variable $x \in \Omega(F)$ is denoted by $\mu_F(x)$. Similarly, for a single polynomial $f$, the corresponding notations $\Omega(f)$ and $\mu_f(x)$ represent the set of variables and the multiplicity of $x$ in $f$, respectively. For example, if $F = \{x^2y+xy^2z^3,xyz+3z^2w\}$, then $\Omega(F) = \{x,y,w,z\}$, $\mu(x) = 4$, $\mu_F(y) = 4$, $\mu_F(w) = 1$, and $\mu_{F}(z) = 6$.
The \emph{ideal generated by $F$} is $\langle f_1,f_2,\ldots,f_s\rangle := \{\sum_{i=1}^sh_if_i\ |\ h_i\in \mathbb{R}[X]\}$. Note that any solution to the system of equations $f_i = 0$ for $1\leq i\leq s$ is a root of every polynomial $f$ in the ideal $f\in \langle f_1,f_2,\ldots,f_s\rangle$. We use this fact in the proof of the following proposition. 
\begin{proposition}\label{prop:system_of_eqns}
    Let $F = \{f_1,f_2,\ldots,f_{s}\}$ be a set of polynomials, where each $f_i$ is the sum of two coprime square-free monomials. If $s$ is odd and the multiplicity $\mu_F(x)$ is even for every $x \in \Omega(F)$, then the system of equations $f_i = 0$ for $1\leq i \leq s$ has no solution in which all variables are non-zero. 
\end{proposition}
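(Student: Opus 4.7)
The plan is to argue by contradiction: assume the system has a common solution in which every variable takes a non-zero real value, and show that this forces a non-negative real quantity to equal a strictly negative one.

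Write $f_i = m_i + n_i$, where $m_i, n_i$ are the two coprime square-free monomials. At a purported solution, $m_i = -n_i$ for every $i$. The key trick is to multiply both sides of $m_i = -n_i$ by $m_i$, obtaining $m_i^2 = -\,m_i n_i$, and then take the product of these identities over all $i$. This gives
\begin{equation*}
\Bigl(\prod_{i=1}^{s} m_i\Bigr)^{\!2} \;=\; (-1)^{s}\,\prod_{i=1}^{s} m_i n_i \;=\; -\,\prod_{i=1}^{s} m_i n_i,
\end{equation*}
using that $s$ is odd.

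Next I would unpack the right-hand side using the multiplicity hypothesis. Because $m_i$ and $n_i$ are coprime and both square-free, the product $m_i n_i$ is itself a square-free monomial whose variable set is exactly $\Omega(f_i)$. Hence, as a monomial in $\mathbb{R}[X]$,
\begin{equation*}
\prod_{i=1}^{s} m_i n_i \;=\; \prod_{x\in \Omega(F)} x^{\mu_F(x)}.
\end{equation*}
By assumption, every exponent $\mu_F(x)$ is even, so this monomial equals $Q(X)^2$ where $Q(X) = \prod_{x \in \Omega(F)} x^{\mu_F(x)/2}$. Evaluated at any point where all variables are non-zero, $Q$ is a non-zero real number, and therefore $\prod_i m_i n_i$ takes a strictly positive value.

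Combining the two displays, the assumed solution would satisfy $(\prod_i m_i)^{2} = -Q^{2} < 0$, contradicting the non-negativity of a real square. So no such solution can exist, and the proposition follows. I do not expect any real obstacle in carrying out this plan: the only subtlety is tracking that coprimality of $m_i, n_i$ is used precisely to make $m_i n_i$ square-free so that $\prod_i m_i n_i$ is literally the monomial $\prod_{x} x^{\mu_F(x)}$, after which the evenness of each $\mu_F(x)$ together with the parity of $s$ produces the desired sign contradiction.
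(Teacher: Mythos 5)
Your proof is correct, and it is both simpler and more direct than the one in the paper. You evaluate at a hypothetical nowhere-zero solution and derive a sign contradiction: from $m_i=-n_i$ you get $\bigl(\prod_i m_i\bigr)^2=(-1)^s\prod_i m_in_i$, and since coprimality plus square-freeness make each $m_in_i$ square-free with variable set $\Omega(f_i)$, the product $\prod_i m_in_i=\prod_{x\in\Omega(F)}x^{\mu_F(x)}$ is a nonzero perfect square by the evenness hypothesis, while oddness of $s$ forces the left side to equal its negative. Every step checks out, and no minimality reduction is needed. The paper instead works inside the ideal $\xi=\langle f_1,\dots,f_s\rangle$: it runs an iterative pairing procedure, alternating two elimination operations and invoking a minimal-counterexample assumption, to produce a polynomial in $\xi$ of the form (monomial)$\times$(sum of two perfect-square monomials), which cannot vanish at a nowhere-zero point. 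Your computation actually yields such a certificate in closed form: since $m_i^2+m_in_i=m_if_i\in\xi$, one has $\prod_i m_i^2\equiv\prod_i(-m_in_i)\pmod{\xi}$, so for odd $s$ the polynomial $\bigl(\prod_i m_i\bigr)^2+\prod_{x\in\Omega(F)}x^{\mu_F(x)}$ lies in $\xi$ and is a sum of two perfect-square monomials. So your route buys brevity and an explicit formula for the infeasibility certificate, while the paper's elimination machinery is what generalizes to the way the sets $G_t$, $H_t$ are used there; for this proposition as stated, your argument is a clean replacement.
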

\begin{proof} We aim to prove that the ideal $\xi = \langle f_1,f_2,\ldots,f_s \rangle$ contains a polynomial that is a product of a monomial with a sum of two perfect-square monomials. Therefore, if a solution to the system $f_i = 0$ for $1\leq i \leq s$ exists with all variables nonzero, then it would not be a root of such a polynomial. This contradiction will establish the desired result. We proceed under the assumption that no smaller subset of $F$ satisfies the hypothesis of the statement. If such a smaller subset existed, the same proof would apply, yielding the same conclusion. 

We apply the following two operations alternately on the polynomials from $F$. Given two polynomials $p_1,p_2 \in \xi $, let $p_i = xq_i + r_i$ for $i=1,2$, where $x$ is a variable and $q_i$, $r_i$ are monomials. We define the new polynomial $p = q_2p_1-q_1p_2 \in \xi$, which simplifies to $p = q_2r_1-q_1r_2$. The multiplicity of $x$ in $p$ satisfies $\mu_p(x) = \mu_{\{p_1,p_2\}}(x) - 2$, while for all other variables $y \in \Omega(\{p_1,p_2\})\setminus\{x\}$, the multiplicity remains unchanged, i.e., $\mu_{p}(y)= \mu_{\{p_1,p_2\}}(y)$. Similarly, if $p_1 = xq_1 - r_1$ and $p_2 = xq_2 + r_2$, we construct $p = q_1 p_2 - q_2 p_1 \in \xi$, which simplifies to $p = q_2 r_1 + q_1 r_2$. The same multiplicity properties hold: $\mu_p(x) = \mu_{\{p_1, p_2\}}(x) - 2$, while $\mu_p(y) = \mu_{\{p_1, p_2\}}(y)$ for all $y \in \Omega(\{p_1, p_2\}) \setminus \{x\}$. Note that the first operation implies a difference of two monomials and the second results in a sum of two monomials. Moreover, if $xq_1 = r_1$, then the second operation is still valid, and if $p_1 = -xq_1+r_1$, then we apply it on $-p_1$.

Start with any polynomial $g_0 = h_0 \in F$. If there is another polynomial sharing a common variable with $g_0$, select one, say $h_1$.  Now, perform the first operation on $g_0$ and $h_1$ to produce a new polynomial $g_1 \in \xi$. In the next iteration, if there exists a polynomial in $F$, other than $h_0$ and $h_1$, that shares a variable with $g_1$, select it, say $h_2$. Now, apply the second operation on $g_1$ and $h_2$ to obtain $g_2 \in \xi$. Repeat these iterations alternatively between the first and the second operation, and ensuring that no previously chosen polynomial from $F$ is reused. Since $F$ contains $s$ polynomials, the procedure must terminate within at most $s-1$ iterations. If it stops after completing the $t$-th iteration, we obtain two sets $G_{t} = \{g_0,g_1,\ldots,g_{t}\} \subseteq \xi$ and $H_{t} = \{h_0,h_1,\ldots,h_{t}\} \subseteq F$, such that the set of variables 
$\Omega(g_t)$ and $\Omega(F\setminus H_t)$ are disjoint. Since each iteration reduces the multiplicity of a variable (if any) by $2$, the multiplicity of a variable $x$ remains even for all $x\in \Omega(g_t) \cup \Omega(F\setminus H_t)$.

If $t$ is odd, we then conclude with a smaller subset $F\setminus H_t$ that still satisfies the hypothesis of the statement, which contradicts our assumption. Therefore, $g_t$ is a sum of two monomials since $t$ must be even. Let $g_t = \prod_{x\in \Omega(g_t)}x^{a(x)} + \prod_{x\in \Omega(g_t)}x^{b(x)}$, where $a(x)$, $b(x)$ are non-negative integers with $a(x)+b(x)$ is even for all $x\in \Omega(g_t)$. Note that $a(x)-c(x)$ and $b(x)-c(x)$ are even for all $x\in \Omega(g_t)$, where $c(x) = \min\{a(x),b(x)\}$. Thus, 
$$g_t = \prod_{x\in \Omega(g_t)}x^{c(x)}\left(\prod_{x\in \Omega(g_t)}x^{a(x)-c(x)}+\prod_{x\in \Omega(g_t)}x^{b(x)-c(x)}\right)$$ 
is the desired polynomial which is contained $\xi$. That is, a polynomial which is a product of a monomial with a sum of two perfect-square monomials. Hence, this completes the proof. \end{proof}

\section{General graphs}\label{sec:general_graphs}
In this section, we focus on a general graph $G$ with diameter $d$. Specifically, our main theorems establish lower bounds on $q(G)$ based on the existence of cycles (see Theorem \ref{thm_about_set_of_cycles}) and the non-existence of the triangle (see Theorem \ref{thm:triangle_free_non_bipartite_q>2}). Let us consider two sets of variables: $X_E = \{x_e : e \in E(G)\}$ for the edges and $X_V = \{x_u : u \in V(G)\}$ for the vertices, resulting in a total of $m + n$ variables. Now, define a matrix $M$ indexed by $V(G)$ along both rows and columns, with entries:
\begin{align*}
   M(u,v) = \begin{cases} 
   x_u &\text{if } u=v,\\
   x_e &\text{if } e=\{u,v\} \in E(G),\\
   0 &\text{otherwise}.
   \end{cases} 
\end{align*}

Let $j =1,2,\ldots,d$ be a fixed number. Then for any $0\leq i< j$, the entries of $M^{i}\circ A_j$ are zero. However, $M^j \circ A_j$ contains non-zero entries, yielding a collection of $m_j$ polynomials over the variables from $X_E$. Each polynomial corresponds to a pair of vertices at distance $j$, with each monomial of it representing a shortest path between the two vertices. Therefore, the total number of monomials in the polynomial equals the number of shortest paths between the two vertices. Moreover, each polynomial is a sum of square-free monomials of degree $j$, all with coefficients equal to $1$. These $m_j$ polynomials are then collected into a set  $\Phi_j(G)$.

The parameter $q(G)$ is also the smallest positive integer such that there exists an assignment for the $m + n$ variables with $M^{q(G)} \in \text{span}\{I, M, \dots, M^{q(G)-1}\}$ and $x_e \neq 0$ for all $x_e \in X_E$. Thus, if the system of equations $f = 0$ for $f\in \Phi_j(G)$ has no solution where $x_e \neq 0$ for all $x_e \in X_E$, then $q(G)\geq j+1$. We use this observation throughout this paper. 

\begin{rem}\label{rem:induced_subgraph_has_no_zero_free_solution} 
Let $G$ be a graph with diameter $d$, and let $1\leq j \leq d$. If there is a unique path of length $j$ between two vertices $u$ and $v$, then $\Phi_j(G)$ contains a monomial. Therefore, as discussed above, $q(G)\geq j+1$. This result was established as \cite[Theorem 3.2]{ahmadi2013minimum} using the same idea, though without relying on the language of polynomials. 
\end{rem}

The following statement is useful and follows immediately. We include it here as a lemma for future reference.  
\begin{lemma}\label{lem:subgraph_no_zero_free_solution}
    If there exists an induced subgraph $H$ of graph $G$ such that $\Phi_j(H) \subseteq \Phi_j(G)$ and the system of equations $f = 0$ for $f\in \Phi_j(H)$ has no solution in which all variables are non-zero for some $j$. Then the system of equations $f = 0$ for $f\in \Phi_j(G)$ has no such solution as well. In particular, $q(G) \geq j+1$. \qed
\end{lemma}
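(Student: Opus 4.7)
The plan is to proceed by contrapositive restriction: a zero-free solution of the larger system restricted to the variables of $H$ becomes a zero-free solution of the smaller system, contradicting the hypothesis. Concretely, suppose toward contradiction that there is an assignment of values to the variables $\{x_e : e \in E(G)\}$ with every $x_e \neq 0$ such that $f(\cdots) = 0$ for all $f \in \Phi_j(G)$. Since $H$ is an induced subgraph of $G$, we have $E(H) \subseteq E(G)$, so the edge variables of $H$ form a subset of the edge variables of $G$. Restricting the assignment to the variables indexed by $E(H)$ still yields only nonzero values, because restriction does not introduce zeros.

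Next I would use the containment $\Phi_j(H) \subseteq \Phi_j(G)$ to conclude that every $f \in \Phi_j(H)$ vanishes under the restricted assignment: each such $f$ belongs to $\Phi_j(G)$ and therefore already evaluates to zero, and its evaluation depends only on the variables $\{x_e : e \in E(H)\}$ since $f$ is itself a polynomial whose monomials correspond to shortest paths of length $j$ in $H$ (in particular, all the edges appearing are edges of $H$). This produces a solution of the system $\{f = 0 : f \in \Phi_j(H)\}$ with all variables nonzero, contradicting the hypothesis. Hence no such solution of $\{f = 0 : f \in \Phi_j(G)\}$ can exist.

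For the second conclusion, I would invoke the observation made immediately before the lemma: $q(G)$ is characterized as the smallest positive integer $k$ for which there is an assignment of values to the $m+n$ variables, with all edge variables nonzero, such that $M^k \in \mathrm{span}\{I, M, \dots, M^{k-1}\}$; and if the system $\Phi_j(G) = 0$ admits no zero-free solution then in particular $M^j$ cannot lie in this span, forcing $q(G) \geq j+1$.

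The only mild subtlety, and the step I would double-check carefully, is the verification that $\Phi_j(H) \subseteq \Phi_j(G)$ really does guarantee that each polynomial in $\Phi_j(H)$ uses only variables from $E(H)$; this is automatic from the definition of $\Phi_j$ (each monomial is a product of edge variables along a length-$j$ path in the underlying graph), but it is the hinge on which the restriction argument turns. Beyond this, the proof is essentially a one-line set-theoretic restriction.
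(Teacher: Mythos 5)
Your proposal is correct and is exactly the argument the paper intends: the lemma is stated with a \qed because it "follows immediately" by the restriction argument you give, namely that a zero-free solution of the $\Phi_j(G)$ system would restrict to a zero-free solution of the $\Phi_j(H)$ system (since each $f\in\Phi_j(H)\subseteq\Phi_j(G)$ involves only edge variables of $H$), and the conclusion $q(G)\geq j+1$ then follows from the observation stated just before the lemma. No issues.
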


The following result establishes a criterion, based on the existence of induced cycles in a graph $G$, that ensures $q(G)$ is sufficiently large.

\begin{theorem}\label{thm_about_set_of_cycles}
Let $G$ be a graph with diameter $d$, containing $s$ distinct induced copies of the cycle $C_{2j}$ for some $2\leq j \leq d$. If the following conditions hold:
\begin{enumerate}[(i)]
\item for each of the $s$ cycles, there is a pair of antipodal vertices on it which is connected by exactly two shortest paths of length $j$ in $G$;
\item the number of cycles $s$ is odd;
\item each edge of $G$ either does not appear in any of the $s$ cycles or appears an even number of times on them.
\end{enumerate}
Then $q(G) \geq j+1$. 
\end{theorem}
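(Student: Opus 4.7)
The plan is to build a set $F$ of $s$ polynomials drawn from $\Phi_j(G)$ and apply Proposition \ref{prop:system_of_eqns} to conclude that the system $f=0$ for $f \in F$ has no solution in which every edge variable is nonzero. Since $F \subseteq \Phi_j(G)$, this immediately transfers to $\Phi_j(G)$, and, by the observation preceding Remark \ref{rem:induced_subgraph_has_no_zero_free_solution}, forces $q(G) \geq j+1$.

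To construct $F$, for each of the $s$ induced copies of $C_{2j}$ select a pair of antipodal vertices $\{u,v\}$ provided by condition (i) and add to $F$ the polynomial $f_{u,v} \in \Phi_j(G)$. Because condition (i) says there are \emph{exactly} two shortest $u$-$v$-paths of length $j$ in $G$, and the induced $C_{2j}$ already furnishes two such paths (its two arcs from $u$ to $v$), these arcs are precisely the two shortest paths, so $f_{u,v}$ is a sum of exactly two monomials, each of degree $j$. Each monomial is square-free because it encodes a simple path, and the two monomials are coprime because the two arcs of $C_{2j}$ share only the endpoints $u,v$ and hence no edges at all.

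Next, verify the hypotheses of Proposition \ref{prop:system_of_eqns} for $F$. Hypothesis (ii) gives $|F|=s$ odd. For each edge $e$ appearing in some cycle of the collection, observe that $e$ belongs to exactly one of the two arcs of every cycle containing it, and therefore contributes exactly $1$ to $\mu_F(x_e)$ for each such cycle. Hypothesis (iii) then says that this count is even for every edge appearing in $F$, i.e.\ $\mu_F(x_e)$ is even for every $x_e \in \Omega(F)$. All hypotheses of Proposition \ref{prop:system_of_eqns} are satisfied, so there is no solution of $F=0$ with every variable nonzero, and the argument above concludes the proof.

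The content of the proof lies in recognising that condition (i) is what makes each polynomial a \emph{sum of only two coprime square-free monomials}, which is exactly the form the proposition requires; without it, an extra shortest $u$-$v$-path off the chosen cycle would introduce a third monomial and invalidate the application. Once this is noted, the parity conditions (ii) and (iii) plug directly into Proposition \ref{prop:system_of_eqns}, and the rest is bookkeeping.
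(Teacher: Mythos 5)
Your proof is correct and follows essentially the same route as the paper: extract from each cycle the polynomial of its distinguished antipodal pair, observe that condition (i) makes it a sum of exactly two coprime square-free monomials, use condition (iii) for the parity of the multiplicities, and invoke Proposition \ref{prop:system_of_eqns}. The extra detail you supply (that the two arcs of the induced $C_{2j}$ are precisely the two shortest paths, and that each edge on a cycle contributes exactly one occurrence to the corresponding polynomial) is a correct and welcome elaboration of steps the paper leaves implicit.
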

\begin{proof}
Consider the set of polynomials $\Phi_j(G)$. Condition $(i)$ guarantees the existence of a set $F = \{f_1,f_2,\ldots,f_s\} \subseteq \Phi_j(G)$, where each $f_i$ is the sum of two coprime square-free monomials. Condition $(iii)$ ensures that the multiplicity $\mu_F(x)$ is even for every $x \in \Omega(F) \subseteq X_E$. Therefore, by Proposition \ref{prop:system_of_eqns} the system of equations $f = 0$ for $f\in F$ has no solution in which all variables are non-zero. Consequently, the same holds for the system of equations $f = 0$ for $f\in \Phi_j(G)$. As a result, $q(G)\geq j+1$. 
\end{proof}

Our next result is about a triangle-free graph. 

\begin{proposition}\label{thm_zero_diag_for_triangle_free}
Let $G$ be a connected triangle-free graph. If $M \in \mathcal{S}(G)$ such that $M^2 = I$, then the following holds:
\begin{enumerate}
\item[(i)] if $G$ is a bipartite graph with partition sets are $V_1$ and $V_2$, then there exists a number $b$ such that $M_{u,u} = b = -M_{v,v}$ for all $u \in V_1$ and $v \in V_2$;
\item[(ii)] if $G$ is a non-bipartite graph, then $M_{v,v} = 0$ for all $v \in V(G)$. 
\end{enumerate}
\end{proposition}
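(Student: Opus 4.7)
The key computation is the diagonal-off-diagonal relation forced by $M^2 = I$ together with triangle-freeness. For any edge $\{u,v\} \in E(G)$, expand
\[
0 = (M^2)_{u,v} = M_{u,u} M_{u,v} + M_{u,v} M_{v,v} + \sum_{w \neq u,v} M_{u,w} M_{w,v}.
\]
Because $G$ is triangle-free, no $w \neq u,v$ is adjacent to both $u$ and $v$, so every term in the sum vanishes (either $M_{u,w} = 0$ or $M_{w,v} = 0$). Since $M_{u,v} \neq 0$ as $\{u,v\} \in E(G)$, we may divide to conclude
\[
M_{u,u} + M_{v,v} = 0 \quad \text{whenever } u \sim v. \tag{$\ast$}
\]
This single identity is the engine of both parts; everything else is propagation along paths.

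For part (i), fix any vertex $u_0 \in V_1$ and set $b := M_{u_0,u_0}$. Relation $(\ast)$ applied to any edge from $u_0$ gives $M_{v,v} = -b$ for every neighbor $v \in V_2$, and applying $(\ast)$ again to any edge from such a $v$ gives $M_{u,u} = b$ for every further neighbor $u \in V_1$. Since $G$ is connected, an induction on the length of a walk from $u_0$ (using bipartiteness to keep track of the parity of the current vertex) shows that $M_{u,u} = b$ for all $u \in V_1$ and $M_{v,v} = -b$ for all $v \in V_2$.

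For part (ii), the non-bipartiteness of $G$ provides an odd cycle $v_1 \sim v_2 \sim \cdots \sim v_{2k+1} \sim v_1$. Iterating $(\ast)$ around the cycle yields $M_{v_i,v_i} = (-1)^{i-1} M_{v_1,v_1}$, so in particular $M_{v_{2k+1},v_{2k+1}} = M_{v_1,v_1}$. But the closing edge $\{v_{2k+1}, v_1\}$ also forces $M_{v_{2k+1},v_{2k+1}} = -M_{v_1,v_1}$ via $(\ast)$, giving $2 M_{v_1,v_1} = 0$, i.e., $M_{v_1,v_1} = 0$. Propagating $(\ast)$ along a spanning walk from $v_1$, which exists by connectedness, then shows $M_{v,v} = 0$ for every $v \in V(G)$.

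No step is a real obstacle here; the only subtlety is recognizing that triangle-freeness is precisely what decouples the cross-term sum in the expansion of $(M^2)_{u,v}$, reducing the constraint $M^2 = I$ on edges to the simple local relation $(\ast)$. Once $(\ast)$ is in hand, the rest is a routine propagation argument using connectedness and, in case (ii), a parity obstruction from an odd closed walk.
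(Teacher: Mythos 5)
Your proof is correct and follows essentially the same route as the paper: both derive the key local relation $M_{u,u}+M_{v,v}=0$ for every edge $\{u,v\}$ from $M^2=I$ plus triangle-freeness (the paper via a block partition by $\{u\}$, $N(u)$, $N^c(u)$; you via the entrywise expansion of $(M^2)_{u,v}$, which is arguably cleaner), and then propagate along walks using connectedness, with the odd cycle forcing the diagonal to vanish in the non-bipartite case.
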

\begin{proof}
Let $u \in V(G)$ such that $M_{u,u} = b$. Consider the block partition of $M$ according to $\{u\}$, the neighborhood $N(u)$, and the non-neighborhood $N^c(u)$ as
\begin{align*}
M = \begin{bmatrix}
b & U^T & O\\
U & -D & Y^T\\
O & Y & Z
\end{bmatrix}.
\end{align*}
Notice that $O$ is a zero matrix, and $U$ is a nowhere-zero matrix. Moreover, $D$ is a diagonal matrix since $G$ is a triangle-free graph. Now, we have
\begin{align*}
M^2 &= \begin{bmatrix}
b^2+U^TU & (b U - D U)^T & (YU)^T\\
b U - D U & D^2+ UU^T+Y^TY & (ZY-YD)^T\\
YU & ZY-YD & Z^2+YY^T
\end{bmatrix}
 = \begin{bmatrix}
I & O & O\\
O & I & O\\
O & O & I
\end{bmatrix}.
\end{align*}
The $(2,1)$-th positions of the last equation implies that $DU = b U$. However, since $D$ is a diagonal matrix, and $U$ is a nowhere-zero matrix so we must have $D = b I$. Therefore, we have proved that if $\{u,v\}$ is an edge, then $M_{u,u} = -M_{v,v}$. Since $G$ is connected so if $G$ is bipartite, then the statement $(i)$ follows. 
Also, if there exists a vertex $u \in V(G)$ such that $M_{u,u} = 0$, then $M_{v,v} = 0$ for every $v\in V(G)$. Now, let $G$ be a non-bipartite graph. Therefore, we assume that $C_{2\ell+1}$ be an odd cycle that contains in $G$. Now, for any $u \in V(C_{2\ell+1})$ we have $M_{u,u} = 0$. If not, there is a contradiction from the fact that $M_{u,u} = -M_{v,v}$ whenever $\{u,v\}$ is an edge. Hence proved.
\end{proof}

The following theorem implies that if the triangles are forbidden in a non-bipartite graph $G$ of odd order, then $q(G)$ can never be equal to $2$. 
\begin{theorem}\label{thm:triangle_free_non_bipartite_q>2}
Let $G$ be a connected triangle-free graph of odd order $n$. If $G$ is a non-bipartite graph, then $q(G) \geq  3$. 
\end{theorem}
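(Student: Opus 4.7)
The plan is to argue by contradiction: assume $q(G) = 2$ and derive a parity contradiction from the trace identity, using Proposition~\ref{thm_zero_diag_for_triangle_free}(ii) as the crucial intermediate tool.

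First, suppose $q(G) = 2$, so there exists $M \in \mathcal{S}(G)$ whose spectrum consists of exactly two distinct real numbers $\lambda > \mu$. The first step is the standard normalization: the matrix
\[
M' := \frac{2}{\lambda - \mu}\left(M - \frac{\lambda + \mu}{2}\,I\right)
\]
is still in $\mathcal{S}(G)$ (since shifting the diagonal and scaling preserve the zero/nonzero pattern off the diagonal) and satisfies $(M')^2 = I$, its eigenvalues being $\pm 1$. So from the outset I may replace $M$ by $M'$ and assume $M^2 = I$.

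Next I invoke Proposition~\ref{thm_zero_diag_for_triangle_free}(ii). Since $G$ is connected, triangle-free, and non-bipartite, that proposition forces $M_{v,v} = 0$ for every $v \in V(G)$. In particular $\operatorname{trace}(M) = 0$.

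Finally I read off the contradiction from a trace count. Let $a$ and $b$ denote the multiplicities of the eigenvalues $+1$ and $-1$ of $M$, respectively. Then
\begin{align*}
a + b &= n, \\
a - b &= \operatorname{trace}(M) = 0,
\end{align*}
so $a = b = n/2$. This forces $n$ to be even, contradicting the hypothesis that $n$ is odd. Hence no such $M$ exists, and $q(G) \geq 3$.

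There is no real obstacle here, since Proposition~\ref{thm_zero_diag_for_triangle_free} is already doing the heavy lifting; the only point to double-check is that the diagonal shift and positive rescaling preserve membership in $\mathcal{S}(G)$, which is immediate from the definition of $\mathcal{S}(G)$ (off-diagonal zero pattern is what matters, and the diagonal is unrestricted).
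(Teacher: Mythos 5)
Your proposal is correct and follows essentially the same route as the paper: reduce to $M^2=I$ (the paper cites the known equivalence from the introduction, while you carry out the shift-and-scale normalization explicitly), then apply Proposition~\ref{thm_zero_diag_for_triangle_free}(ii) to get zero trace and derive the parity contradiction with odd $n$.
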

\begin{proof}
Suppose $q(G) = 2$. Then there exists $M\in \mathcal{S}(G)$ such that $M^2 = I$. Thus, the distinct eigenvalues of $M$ are $-1$ and $1$. By Proposition \ref{thm_zero_diag_for_triangle_free} we must have $\text{trace}(M) = 0$. Therefore, the multiplicities of the eigenvalues $-1$ and $1$ of $M$ are equal. However, that is not possible since $n$ is odd. Hence, $q(G) \geq 3$. 
\end{proof}

\section{Graphs with Special Properties}\label{sec:graphs_special_properties}
In this section, we examine the parameter $q(G)$ for graphs exhibiting certain algebraic and regularity properties. Specifically, we focus on association schemes, distance-regular graphs, and simplicial complexes, each covered in the following three sections.

\subsection{Association schemes}\label{sec:assoc_schemes}
We start by introducing the essential definitions and preliminary results on association schemes required for our work (see, for e.g., \cite[Chapter 2]{bannai2021algebraic}).
An \emph{association scheme (commutative) with $d$ classes} is a collection $\mathcal{A} = \{A_0, A_1, \dots, A_d\}$ of $n \times n$ matrices, where each matrix has entries that are either $0$ or $1$. Furthermore, these matrices satisfy the following conditions:
\begin{inparaenum}[(i)]
\item $A_0 = I_n$,
\item $A_0 + A_1 + \dots + A_d = J_n$, \item for all $0 \leq i \leq d$, the transpose $A_i^T \in \mathcal{A}$, and
\item for all $0 \leq i, j \leq d$, the product $A_i A_j  = A_jA_i = \sum_{k=0}^{d} p_{i,j}^k A_k$, where $p_{i,j}^k$ are non-negative integers.
\end{inparaenum} 

If $A_i^T = A_i$ for all $0\leq i \leq d$, the association scheme is called a \emph{symmetric association scheme}. Thus, a symmetric association scheme corresponds to a decomposition of the complete graph $K_n$ into $d$ graphs, whose adjacency matrices $A_1,A_2,\ldots,A_d$, along with the identity matrix $A_0 = I$, satisfy the condition (iv). From this point onward, we refer to an association scheme simply as a scheme. A graph within a scheme is one whose adjacency matrix is given by $A_J := \sum_{j \in J} A_j$ for some subset $J \subseteq \{1,2,\ldots,d\}$. 

The \emph{Bose-Mesner algebra} $\mathbb{C}(\mathcal{A})$ of a scheme $\mathcal{A} = \{A_0,A_1,\ldots,A_d\}$ is the commutative algebra generated by the matrices $A_0,A_1,\ldots,A_d$. Equivalently, by the definition of a scheme, it is the complex linear span of these matrices, and the set $\{A_0,A_1,\ldots,A_d\}$ forms a basis of $\mathbb{C}(\mathcal{A})$. There is also another basis of $\mathbb{C}(\mathcal{A})$, consisting of idempotent matrices. Specifically, there exists a basis $\{E_0,E_1,\ldots,E_d\}$ of $\mathbb{C}(\mathcal{A})$ such that \begin{inparaenum}
    \item $E_0=\frac{1}{n}J$,
    \item for all $0\leq i,j \leq d$, the product $E_iE_j = \delta_{i,j}E_i$, where $\delta_{i,j}$ is the Kronecker delta,
    \item $\sum_{i=0}^{d}E_i = I$, and
    \item for all $0\leq i\leq d$, the transpose $E_{i}^T$ is also in the set $\{E_0,E_1,\ldots,E_d\}$.
\end{inparaenum}
Additionally, the columns of each $E_i$ are eigenvectors for every matrix in $\mathbb{C}(\mathcal{A})$. Since $\{A_0,A_1,\ldots,A_d\}$ and $\{E_0,E_1,\ldots,E_d\}$ represents two distinct bases of $\mathbb{C}(\mathcal{A})$, the following relations hold:
\begin{align*}
    A_j &= \sum_{i=0}^{d}P_{j}(i)E_i, \ \ 0\leq j \leq d,\\
    E_j &= \frac{1}{n}\sum_{i=0}^{d}Q_{j}(i)A_i, \ \ 0\leq j \leq d.
\end{align*}

The change of basis matrices $P = (P_{j}(i))_{0\leq i,j \leq d}$ and $Q = (Q_j(i))_{0\leq i,j \leq d}$ are referred to as \emph{the first eigenmatrix} (or \emph{the character table}) and \emph{the second eigenmatrix} (or \emph{the dual character table}) of the association scheme $\mathcal{A}$, respectively. The rows of $P$ correspond to the irreducible characters of the algebra $\mathbb{C}(\mathcal{A})$. Note that the $j$-th column of the matrix $P$ contains all the eigenvalues of $A_j$ (without accounting for multiplicities). The multiplicity of an eigenvalue $P_j(i)$ of $A_j$ is given by $m_i := Q_{i}(0)$. The eigenvalue $P_j(0)$ is the valency of the regular graph whose adjacency matrix is $A_j$. Furthermore, the primitive idempotent matrices $E_i$ can also be expressed as 
\begin{align}\label{eqn:scheme_idemp}
    E_i = \frac{m_i}{n} \sum_{j=0}^d \frac{\overline{P_{j}(i)}}{P_{j}(0)} A_j, \ \ 0\leq i \leq d,
\end{align}
where $\overline{z}$ denote the complex conjugate of $z$ (see, for e.g., \cite[Theorem 2.22 (3)]{bannai2021algebraic}). Thus, for every subset $I \subseteq \{0,1,\ldots,d\}$ we have
\begin{align*}
    E_I := \sum_{i \in I}E_i = \frac{1}{n}\sum_{i\in I} \sum_{j=0}^d \frac{m_i\overline{P_{j}(i)}}{P_{j}(0)} A_j.
\end{align*}
Therefore, if $A_j(x,y) = 1$ for some $0\leq j\leq d$, then $E_I(x,y) = \frac{1}{n}\sum_{i \in I}\frac{m_i\overline{P_{j}(i)}}{P_{j}(0)}$. For every proper subset $I \subset \{0,1,\ldots,d\}$, the matrix $E_I$ has only two distinct eigenvalues, $0$ and $1$. This gives us a way to identify idempotent matrices in $\mathbb{C}(\mathcal{A})$ that can be used to prove certain graphs in the scheme will have two distinct eigenvalues. More precisely, suppose there exist two subsets $I \subset \{0,1,\ldots,d\}$ and $J \subseteq \{1,2,\ldots,d\}$ such that $\sum_{i \in I}\frac{m_i\overline{P_{j}(i)}}{P_{j}(0)}\neq 0$ if and only if $j \in J\cup \{0\}$. Now, consider the graph $G_J$ whose adjacency matrix is given by $A_J = \sum_{j \in J}A_j$. Then the matrix $E_I$ has two distinct eigenvalues, and its zero and non-zero pattern of off-diagonal entries is same as that of $A_J$. 

Recall that for a symmetric scheme, the matrices $A_j$ are all symmetric. Thus, the eigenvalues $P_j(i)$ are all real. Consequently, the idempotent matrices $E_I$ are real and symmetric for all subsets $I \subseteq \{0,1,\ldots,d\}$. Hence, we obtain the following theorem. 
\begin{theorem}\label{thm:sym_scheme_idem}
    Let $\mathcal{A} = \{A_0,A_1,\ldots,A_d\}$ be a symmetric association scheme with $d$ classes. Suppose there exist two subsets $I \subset \{0,1,\ldots,d\}$ and $J \subseteq \{1,2,\ldots,d\}$ such that $\sum_{i \in I}\frac{m_iP_{j}(i)}{P_{j}(0)}\neq 0$ if and only if $j \in J\cup \{0\}$. Let us consider the graph $G_J$ whose adjacency matrix is given by $A_J = \sum_{j \in J}A_j$. Then $E_I$ is an idempotent contains in $\mathcal{S}(G_J)$. In particular, $q(G_J) = 2$. 
\end{theorem}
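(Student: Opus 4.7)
The plan is to exhibit the idempotent $E_I$ itself as the required symmetric matrix in $\mathcal{S}(G_J)$ with exactly two distinct eigenvalues. Concretely, I must verify three things: that $E_I$ is real and symmetric, that its off-diagonal zero/nonzero pattern agrees precisely with that of $A_J$, and that $E_I$ has exactly two distinct eigenvalues. Once these are in place, $q(G_J)\le 2$ is immediate, and matching it with the trivial lower bound from $G_J$ being nonempty gives $q(G_J)=2$.

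First, I would expand $E_I$ in the $A_j$-basis using equation (\ref{eqn:scheme_idemp}):
\begin{align*}
E_I \;=\; \sum_{i\in I} E_i \;=\; \frac{1}{n}\sum_{j=0}^{d}\left(\sum_{i\in I}\frac{m_i\,\overline{P_j(i)}}{P_j(0)}\right)A_j.
\end{align*}
Because $\mathcal{A}$ is symmetric, each $P_j(i)$ is real, so the conjugation is vacuous. Writing $c_j := \tfrac{1}{n}\sum_{i\in I}\tfrac{m_i P_j(i)}{P_j(0)}$, the hypothesis states that $c_j\neq 0$ if and only if $j\in J\cup\{0\}$. Hence $E_I = c_0 I + \sum_{j\in J} c_j A_j$. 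For any distinct vertices $u,v$, exactly one relation $A_k$ satisfies $A_k(u,v)=1$, and then $E_I(u,v)=c_k$, which is nonzero precisely when $k\in J$, i.e., precisely when $\{u,v\}$ is an edge of $G_J$. Moreover $E_I$ is real and symmetric because the $A_j$ are. Therefore, to show $E_I\in \mathcal{S}(G_J)$, it only remains to confirm the spectral claim.

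Next, since $E_I$ is a sum of pairwise orthogonal primitive idempotents of the Bose–Mesner algebra, it is itself an idempotent, so its eigenvalues lie in $\{0,1\}$ with multiplicities $\sum_{i\notin I} m_i$ and $\sum_{i\in I} m_i$ respectively. The hypothesis forces $I$ to be nonempty (else every $c_j$ would vanish, contradicting $c_0\neq 0$ as $0\in J\cup\{0\}$), and $I$ is proper by assumption; hence both $0$ and $1$ actually occur as eigenvalues, so $q(E_I)=2$. This yields $q(G_J)\le 2$. Finally, the hypothesis also forces $J$ nonempty (otherwise $E_I=c_0 I$ would be a nonzero scalar idempotent, forcing $E_I=I$ and $I=\{0,1,\ldots,d\}$, contradicting properness), so $G_J$ has at least one edge and $q(G_J)\ge 2$, giving equality.

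There is no substantive obstacle here: the result is essentially a direct translation of the idempotent decomposition of $\mathbb{C}(\mathcal{A})$ into the language of $\mathcal{S}(G_J)$. The only delicate points are the small bookkeeping lemmas that $I$ must be a nonempty proper subset and $J$ must be nonempty — both extracted from the if-and-only-if condition — together with the observation that the symmetry of the scheme is exactly what makes the $c_j$ real so that $E_I$ qualifies as an element of the real matrix family $\mathcal{S}(G_J)$.
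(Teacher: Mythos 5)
Your proposal is correct and follows essentially the same route as the paper: expand $E_I$ in the $A_j$-basis via equation (\ref{eqn:scheme_idemp}), use symmetry of the scheme to see the coefficients are real, match the off-diagonal zero/nonzero pattern with $A_J$, and invoke the fact that a sum of primitive idempotents over a nonempty proper subset $I$ has exactly the two eigenvalues $0$ and $1$. Your extra bookkeeping (that $I$ and $J$ are forced to be nonempty by the if-and-only-if hypothesis) is a slightly more careful treatment of details the paper leaves implicit, but the argument is the same.
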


Every finite group gives rise to an association scheme. Let $H$ be a finite group, and let $\{C_0 = \{e\}, C_1, \dots, C_d\}$ be the set of conjugacy classes of $H$, where $e$ is the identity element of $H$. For each $0\leq j \leq d$, let $h_j$ be a representative element of the conjugacy class $C_j$. For every $0\leq i\leq d$, we define the matrix $A_i$ with rows and columns indexed by the elements of $H$. Further, the entry $A_i(x, y)$ is equal to $1$ if $y^{-1}x \in C_i$, and $0$ otherwise. 

The collection $\mathcal{A}_H = \{A_0, A_1, \dots, A_d\}$ forms an association scheme with $d$ classes which is known as \emph{the conjugacy class association scheme} (see, for e.g., \cite[Example 2.8]{bannai2021algebraic}). However, this scheme is not necessarily symmetric. A \emph{normal Cayley graph} of a group $H$ is an undirected graph in its conjugacy class scheme. In these graphs, the adjacency structure reflects the group's internal symmetries and conjugation relations.

In the following discussion, we utilize the representation theory of finite groups, particularly their character theory. For more details, see, for e.g., \cite[Chapter 3]{isaacs1976character}. Let $\text{Irr}(H) = \{\chi_0,\chi_1,\ldots,\chi_{d}\}$ denote the set of all irreducible, inequivalent characters of $H$, where $\chi_0$ is the trivial character. A character $\chi$ is called \emph{non-linear} if $\chi(e) > 1$. If $H$ is non-Abelian, it must have at least one non-linear character. Burnside’s following classical theorem further asserts that such irreducible characters must have nontrivial vanishing sets (see, for e.g., \cite[Theorem 3.15]{isaacs1976character}).
\begin{theorem}(Burnside) \label{thm_Burnside} 
Let $H$ be a non-Abelian finite group, and let $\chi \in \text{Irr}(H)$ be a non-linear character. Then there exists $h \in G\setminus\{e\}$ such that $\chi(h)=0$. 
\end{theorem}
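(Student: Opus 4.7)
The plan is to argue by contradiction and exhibit a rational integer that is simultaneously nonzero and of absolute value strictly less than $1$. Suppose $\chi \in \mathrm{Irr}(H)$ is non-linear, write $f := \chi(e) \geq 2$, and assume $\chi(h) \neq 0$ for every $h \in H$. Set $n := |H|$ and consider the product
\[
\alpha \;:=\; \prod_{h \in H,\, h \neq e} \chi(h).
\]

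The first step is to show $\alpha \in \mathbb{Z} \setminus \{0\}$. Each $\chi(h)$ is a sum of $f$ roots of unity (the eigenvalues of a representing matrix for $h$), hence an algebraic integer, and so $\alpha$ is an algebraic integer as well. For Galois invariance over $\mathbb{Q}$, let $\sigma \in \mathrm{Gal}(\mathbb{Q}(\zeta_n)/\mathbb{Q})$ correspond to an integer $t$ with $\gcd(t,n)=1$ (so $\sigma(\zeta_n)=\zeta_n^{t}$). A direct computation on eigenvalues yields $\sigma(\chi(h)) = \chi(h^t)$, and since $h \mapsto h^t$ is a bijection of $H$ fixing $e$, we get $\sigma(\alpha) = \prod_{h \neq e} \chi(h^{t}) = \alpha$. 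Hence $\alpha \in \mathbb{Q}$; being a Galois-invariant algebraic integer, $\alpha \in \mathbb{Z}$, and the standing hypothesis forces $\alpha \neq 0$, so $|\alpha| \geq 1$.

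The second step is to bound $|\alpha|$ from above via the first row orthogonality relation and the AM--GM inequality. From $\sum_{h \in H}|\chi(h)|^2 = n$ we obtain $\sum_{h \neq e}|\chi(h)|^2 = n - f^2$, and AM--GM applied to the $n-1$ non-negative reals $|\chi(h)|^2$ gives
\[
|\alpha|^2 \;=\; \prod_{h \neq e}|\chi(h)|^2 \;\leq\; \left(\frac{n-f^2}{n-1}\right)^{n-1}.
\]
The bound $n = \sum_{\psi \in \mathrm{Irr}(H)} \psi(e)^2 \geq 1 + f^2$ (already ensured by the trivial character and $\chi$) makes the right-hand side non-negative, and since $f \geq 2$ we have $n-f^2 < n-1$, forcing the base to be strictly less than $1$ and hence $|\alpha|^2 < 1$. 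This contradicts $|\alpha|\geq 1$ from the first step, completing the proof.

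The only delicate point is the Galois-invariance computation in the first step, but it reduces to the standard identity $\sigma(\chi(h)) = \chi(h^t)$, so I do not foresee a serious obstacle; everything else is a direct application of basic character-theoretic identities.
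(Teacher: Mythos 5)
Your proof is correct. The paper itself gives no proof of this statement; it simply cites it as Theorem 3.15 of Isaacs' \emph{Character Theory of Finite Groups}, so there is no in-paper argument to compare against. Your argument is a clean variant of the classical one. Each step checks out: $\chi(h)$ is a sum of $f$ roots of unity of order dividing $n$, so $\alpha$ lies in $\mathbb{Q}(\zeta_n)$; the identity $\sigma_t(\chi(h))=\chi(h^t)$ together with the fact that $h\mapsto h^t$ is a bijection of $H$ fixing $e$ (its inverse is $h\mapsto h^s$ for $st\equiv 1\pmod n$) gives Galois invariance, hence $\alpha\in\mathbb{Z}$ and $|\alpha|\geq 1$ under the standing hypothesis; and the row orthogonality relation plus AM--GM and $n\geq 1+f^2$, $f\geq 2$ forces $|\alpha|^2\leq\bigl((n-f^2)/(n-1)\bigr)^{n-1}<1$. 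The only difference from the textbook proof is cosmetic: Isaacs applies the same Galois-invariance-plus-AM--GM trick locally, to the product of $\chi(g^i)$ over the $\varphi(m)$ generators of each cyclic subgroup $\langle g\rangle$, and then sums the resulting inequalities $\sum_{\gcd(i,m)=1}|\chi(g^i)|^2\geq\varphi(m)$ over cyclic subgroups to contradict $\chi(e)^2>1$; you apply it once, globally, over all of $H\setminus\{e\}$. Your global version is slightly shorter and requires no partition of the group, at the cost of needing the global bijectivity of $h\mapsto h^t$, which you could make a touch more explicit but which is not a gap.
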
 

The Bose-Mesner algebra of the scheme $\mathcal{A}_H$ is isomorphic to the center of the group algebra $\mathbb{C}(H)$.  The irreducible characters of the Bose-Mesner algebra of $\mathcal{A}_H$ are in bijection with the irreducible characters of $H$. More precisely, the rows of the first eigenmatrix $P$ correspond to the irreducible characters, while the columns correspond to the conjugacy classes of $H$, and the following holds:
\begin{align*}
    P_{j}(i) = \frac{|C_j|\cdot \chi_i(h_j)}{\chi_i(e)}.
\end{align*}
Additionally, the multiplicities are given by $m_i = \chi_i(e)^2$. Therefore, equation (\ref{eqn:scheme_idemp}) implies that the primitive idempotents of $\mathcal{A}_H$ are given by
\begin{align*}
    E_i = \frac{m_i}{n} \sum_{j=0}^d \frac{\overline{P_{j}(i)}}{P_{j}(0)} A_j = \frac{\chi_i(e)}{|H|}\sum_{j=0}^d \overline{\chi_i(h_j)}A_j, \ \ 0\leq i \leq d. 
\end{align*}

Note that for any $h \in H$ and $\chi \in \text{Irr}(H)$, we have $\chi(h^{-1}) = \overline{\chi(h)}$. Hence, $\chi(h) \neq 0$ if and only if $\chi(h^{-1}) \neq 0$.
In combination with Theorem \ref{thm_Burnside}, this implies that non-linear irreducible characters associated with a conjugacy class scheme must vanish on at least one nontrivial matrix $A_j$. Consequently, the corresponding primitive idempotent (possibly complex Hermitian) corresponds to some nontrivial normal Cayley graph of the group $H$. We summarize this result as the following theorem.

\begin{theorem} \label{thm_q_of_conjuagacy_class_scheme}
Let $\mathcal{A}_H$ be the conjugacy class scheme of a non-Abelian group $H$, and let $\chi_i \in \text{Irr}(H)$ be a non-linear character. Define $J = \{j: \chi_i(h_j) \ne 0\}$ as the complement of the vanishing set of $\chi_i$. Let $G_{J}$ be the normal Cayley graph in the association scheme $\mathcal{A}_H$, with adjacency matrix given by $A_{J} = \sum_{j \in J} A_j$. Then $E_i$ is an idempotent contains in $\mathcal{S}_{\mathbb{C}}(G_J)$. In particular, $q_{\mathbb{C}}(G_{J}) = 2$. 
\end{theorem}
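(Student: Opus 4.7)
The plan is to take the primitive idempotent $E_i$ itself as the matrix witnessing $q_{\mathbb{C}}(G_J) \le 2$, and then check separately that $q_{\mathbb{C}}(G_J) \geq 2$. The explicit formula
\[E_i = \frac{\chi_i(e)}{|H|}\sum_{j=0}^d \overline{\chi_i(h_j)}\,A_j\]
derived just above the statement is the main tool. First I will read off its zero/nonzero pattern: for $x\neq y$ with $y^{-1}x\in C_j$ and $j\ge 1$, the entry $E_i(x,y)$ is a nonzero scalar multiple of $\overline{\chi_i(h_j)}$, hence nonzero precisely when $j\in J$; this matches the adjacency pattern of $A_J$ as a $0/1$ matrix on $\{1,\dots,d\}$. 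The diagonal entries $E_i(x,x) = \chi_i(e)^2/|H|$ are unconstrained in $\mathcal{S}_{\mathbb{C}}(G_J)$. I will also note that the identity $\chi_i(h^{-1}) = \overline{\chi_i(h)}$ makes $J$ closed under sending a conjugacy class to its inverse class, which is what forces $G_J$ to be undirected and $E_i$ to be Hermitian (equivalently, the Bose-Mesner algebra is a $*$-algebra under conjugate transpose, so primitive idempotents are self-adjoint).

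Next, I will verify that $E_i$ has exactly two distinct eigenvalues. Being an idempotent, $E_i^2=E_i$ pins the spectrum inside $\{0,1\}$; primitivity gives $E_i\neq 0$, hence eigenvalue $1$ occurs, and $E_i\neq I$ gives eigenvalue $0$. The inequality $E_i\neq I$ is exactly where Burnside's theorem (Theorem~\ref{thm_Burnside}) enters: since $\chi_i$ is non-linear, some $\chi_i(h_k)=0$ with $k\ge 1$, so the coefficient of $A_k$ in the expansion of $E_i$ vanishes and $E_i$ cannot equal $I$. Thus $E_i\in\mathcal{S}_{\mathbb{C}}(G_J)$ is a Hermitian matrix with distinct eigenvalues $0$ and $1$.

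To upgrade to $q_{\mathbb{C}}(G_J)=2$, I must exclude the degenerate possibility that $G_J$ has no edges, which would give $q_{\mathbb{C}}(G_J)=1$. This requires the complementary fact that $\chi_i$ is nonzero on at least one nontrivial conjugacy class, and follows from the first orthogonality relation $\sum_{g\in H}\chi_i(g)=0$: rearranged to $\chi_i(e) = -\sum_{g\neq e}\chi_i(g)$, this forces $\chi_i(h_j)\neq 0$ for some $j\ge 1$, since $\chi_i(e)\neq 0$. Hence $J\cap\{1,\dots,d\}\neq\emptyset$, $G_J$ has at least one edge, and $q_{\mathbb{C}}(G_J)\ge 2$.

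The main bookkeeping subtlety—the part I expect to need care but not real effort—is keeping the two halves of the argument separate: Burnside's theorem supplies the nontriviality of the vanishing set of $\chi_i$ (needed for $E_i\neq I$), while the first orthogonality relation supplies the nontriviality of the non-vanishing set (needed for $G_J$ to be a nonempty graph). Neither implication is automatic from the other, but both are immediate from classical character theory, and assembling them with the explicit formula for $E_i$ gives the theorem.
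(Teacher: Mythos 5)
Your proposal is correct and follows essentially the same route as the paper: the explicit formula $E_i = \frac{\chi_i(e)}{|H|}\sum_j \overline{\chi_i(h_j)}A_j$ gives the support pattern, Burnside's theorem supplies a vanishing coefficient so that $E_i\neq I$, and idempotency pins the spectrum to $\{0,1\}$. The only addition is your explicit check via $\sum_{g}\chi_i(g)=0$ that $G_J$ has an edge (ruling out $q_{\mathbb{C}}=1$), a small point the paper leaves implicit.
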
 

\begin{rem}
A group is called \emph{ambivalent} if every element is conjugate to its inverse. For a finite group, this is equivalent to stating that all its characters are real-valued. Notable examples of non-Abelian ambivalent groups include the symmetric group $S_n$ and the dihedral group $D_n$ for any $n\geq 3$. We note that if the group $H$ is ambivalent in Theorem \ref{thm_q_of_conjuagacy_class_scheme}, then the conclusion is over the real field instead of the complex field. However, the desired conclusion over the real field can still hold even if the group is not ambivalent.
\end{rem}

\subsection{Distance-regular graphs}\label{sec:drgs}
Recall that a connected graph with diameter $d$ is called {\em distance-regular} if, for any two vertices $u,v$ at distance $i$, there are precisely $c_i$ neighbors of $v$ at distance $i-1$ from $u$ and precisely $b_i$ neighbors at distance $i+1$ from $u$, where $0\leq i \leq d$. The ordered array $\{ b_0, b_1, \ldots, b_{d-1}; c_1, c_2, \ldots, c_d \}$ is called {\it the intersection array} of the distance-regular graph. By definition, a distance-regular graph is a $k$-regular graph with $k=b_0$. Also, for any two vertices $u$ and $v$ at distance $i$, the number of neighbors of $v$ at distance $i$ from $u$ is given by $a_i:= k-b_i-c_i$. A useful fact is that the intersection array of a distance-regular graph is monotonic: $k = b_0 \ge b_1 \ge \dots \ge b_d$ and $1 = c_1 \le c_2 \le \dots \le c_d$. 

The collection $\mathcal{A} = \{A_0,A_1,\ldots,A_d\}$ of distance matrices of a distance-regular graph $G$ with diameter $d$ forms a symmetric association scheme with $d$ classes (see, for e.g., \cite[Section 2.8]{bannai2021algebraic}). By the definition of distance-regular graph, any polynomial $\Phi_j(G)$ is a sum of $\prod_{i=1}^{j}c_i$ square-free monomials (a property which is not true in general). The next result follows immediately from Remark \ref{rem:induced_subgraph_has_no_zero_free_solution} and the above discussion. It has already been noted for strongly-regular graphs in \cite{ahmadi2013minimum}.
\begin{proposition}\label{prop:c_i_are_1}
    Let $G$ be a distance-regular graph with diameter $d$, and let $j$ be the largest integer such that $c_j = 1$. Then $j+1\leq q(G) \leq d+1$. \qed
\end{proposition}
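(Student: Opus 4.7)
The plan is to obtain the two inequalities separately, as they come from completely different sources. The upper bound $q(G)\leq d+1$ is essentially automatic: the adjacency matrix $A(G)\in \mathcal{S}(G)$, and since $G$ is distance-regular of diameter $d$, as recalled in the introduction, $A(G)$ has exactly $d+1$ distinct eigenvalues. Thus $q(G)\le q(A(G))=d+1$. Nothing clever is required here; I would simply quote this known fact about distance-regular graphs.

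For the lower bound $j+1\le q(G)$, the strategy is to apply Remark \ref{rem:induced_subgraph_has_no_zero_free_solution}, which reduces the problem to exhibiting a pair of vertices joined by a unique path of length $j$. The key input is the monotonicity of the $c_i$'s in a distance-regular graph: $1=c_1\le c_2\le \cdots \le c_d$. Since $j$ is by hypothesis the largest index with $c_j=1$, monotonicity forces $c_i=1$ for every $1\le i\le j$. I would then take any two vertices $u,v$ with $d_G(u,v)=j$ and argue by induction on the distance from $u$: starting at $v$, there is a unique neighbor $v_{j-1}$ at distance $j-1$ from $u$ (since $c_j=1$); from $v_{j-1}$ there is a unique neighbor $v_{j-2}$ at distance $j-2$ (since $c_{j-1}=1$); iterating gives exactly one shortest path $v=v_j,v_{j-1},\ldots,v_0=u$. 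More generally, the number of shortest paths between two vertices at distance $j$ equals $\prod_{i=1}^{j}c_i$, which here collapses to $1$.

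Consequently, the polynomial in $\Phi_j(G)$ corresponding to the pair $\{u,v\}$ is a single square-free monomial in the edge variables along this unique path. Such a monomial cannot vanish under any assignment in which every $x_e\neq 0$, so the system $f=0$ for $f\in \Phi_j(G)$ has no zero-free solution. By the criterion stated just before Remark \ref{rem:induced_subgraph_has_no_zero_free_solution} (and spelled out in Lemma \ref{lem:subgraph_no_zero_free_solution}), this gives $q(G)\ge j+1$, completing the proof.

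There is no real obstacle here: both inequalities are short consequences of results already assembled in the paper. The only point worth being careful about is invoking the monotonicity $c_1\le \cdots \le c_d$ to pass from the single hypothesis $c_j=1$ to $c_i=1$ for all $i\le j$, since the statement is phrased only in terms of the largest such index.
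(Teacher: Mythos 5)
Your proposal is correct and follows essentially the same route as the paper: the upper bound comes from the adjacency matrix of a diameter-$d$ distance-regular graph having exactly $d+1$ distinct eigenvalues, and the lower bound comes from the monotonicity $c_1\le\cdots\le c_d$ forcing $\prod_{i=1}^{j}c_i=1$, so that each polynomial of $\Phi_j(G)$ is a single square-free monomial and Remark \ref{rem:induced_subgraph_has_no_zero_free_solution} applies. The paper merely states this more tersely (citing the preceding discussion that $\Phi_j(G)$ consists of sums of $\prod_{i=1}^{j}c_i$ monomials); you have simply written out the details.
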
 

Suppose that there exists $j$ such that $c_j=2$ and $c_i = 1$ for all $1\leq i\leq j-1$. Then each polynomial in $\Phi_j(G)$ is a sum of two coprime square-free monomials. Furthermore, each polynomial represent an induced cycle of length $2j$ of $G$. Conversely, each cycle of length $2j$ in $G$ corresponds to $j$ distinct polynomials, one for each antipodal pair of the cycle. Note that a distance-regular graph is triangle-free if and only if $a_1 = 0$. Therefore, the following two results follows from Theorem \ref{thm_about_set_of_cycles} and Theorem \ref{thm:triangle_free_non_bipartite_q>2}, respectively.

\begin{proposition}\label{prop:q(G)>t}
    Let $G$ be a distance-regular graph with diameter $d$ such that for some $2\leq j\leq d$ we have $c_j = 2$ and $c_{j-1} = 1$ for all $1\leq i < j$. Suppose $G$ contains $s$ distinct induced copies of the cycle $C_{2j}$ such that each edge of $G$ either does not appear in any of these cycles or appears an even number of times on them. If $s$ is odd, then $q(G)\geq j+1$. \qed
\end{proposition}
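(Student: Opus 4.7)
The plan is to apply Theorem~\ref{thm_about_set_of_cycles} to the $s$ induced $C_{2j}$ cycles supplied by the hypothesis. Hypotheses~(ii) (oddness of $s$) and~(iii) (each edge of $G$ lies in zero or an even number of the cycles) are word-for-word what the proposition assumes, so the only condition requiring verification is~(i): each of the $s$ cycles must carry an antipodal pair connected by exactly two shortest paths of length $j$ in $G$.

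The distance-regular structure, together with the intersection numbers $c_1=\cdots=c_{j-1}=1$ and $c_j=2$, controls the shortest-path count. By the standard enumeration in a distance-regular graph (at step $i$, working back from one endpoint, there are $c_i$ choices of neighbor at one smaller distance from the other endpoint), the number of shortest paths between two vertices at distance $k$ is $c_1 c_2 \cdots c_k$. Under our hypotheses this evaluates to $1$ for pairs at distance strictly less than $j$ and exactly $2$ for pairs at distance $j$. In particular, any pair at distance $j$ automatically provides the "exactly two shortest paths of length $j$" half of condition~(i).

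The crux of the argument is then to show that for every induced $C_{2j}$ in $G$ and every antipodal pair $(u,v)$ on it, one has $d_G(u,v)=j$. The bound $d_G(u,v)\le j$ is immediate from either arc of the cycle. If instead $d_G(u,v)=k<j$, the uniqueness of geodesics between vertices at distance less than $j$ (forced by $c_i=1$ for $i\le k$) supplies a single shortest $uv$-path $P$, which, since the $C_{2j}$ is induced, is internally disjoint from it. Concatenating $P$ with one of the arcs produces a closed walk of length $j+k<2j$, and a careful dissection of this walk using the same uniqueness of short geodesics exhibits a shorter induced cycle contradicting either the inducedness of the original $C_{2j}$ or the intersection array. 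This is precisely the correspondence between elements of $\Phi_j(G)$ and induced $C_{2j}$ cycles recorded in the paragraph preceding the proposition; it is the most delicate point of the proof, since the other ingredients are either direct from the hypotheses or standard counting. Once this is in hand, the two arcs of each cycle are the two shortest $uv$-paths, hypothesis~(i) is satisfied for every cycle in the collection, and Theorem~\ref{thm_about_set_of_cycles} yields $q(G)\ge j+1$.
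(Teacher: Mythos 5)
Your proposal is correct and follows the paper's own route: the paper deduces the proposition directly from Theorem~\ref{thm_about_set_of_cycles}, using exactly the observations you make — that conditions (ii) and (iii) are restatements of the hypotheses, and that under $c_1=\cdots=c_{j-1}=1$, $c_j=2$ every pair at distance $j$ has precisely $c_1\cdots c_j=2$ shortest paths, so condition (i) reduces to each induced $C_{2j}$ carrying an antipodal pair at distance $j$. The point you single out as delicate is precisely the correspondence between polynomials of $\Phi_j(G)$ and induced $C_{2j}$'s that the paper asserts in the paragraph preceding the proposition without further argument, so your treatment is at least as complete as the paper's.
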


\begin{proposition}\label{cor_triangle_free_drg}
Let $G$ be a non-bipartite distance-regular graph of odd order with $a_1 = 0$. Then $q(G) \geq 3$. In particular, if $G$ is a strongly-regular graph of odd order with $\lambda = 0$, then $q(G) = 3$. \qed
\end{proposition}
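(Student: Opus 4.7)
The plan is to derive the first claim directly from Theorem \ref{thm:triangle_free_non_bipartite_q>2}. In a distance-regular graph, the parameter $a_1$ counts the common neighbors of a pair of adjacent vertices, so the condition $a_1 = 0$ is exactly the statement that $G$ contains no triangle. Since every distance-regular graph is connected by definition, all four hypotheses of Theorem \ref{thm:triangle_free_non_bipartite_q>2} (connectedness, triangle-freeness, non-bipartiteness, odd order) are in place, and $q(G) \geq 3$ follows immediately.

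For the ``in particular'' statement, the plan is to combine the first part with the standard upper bound $q(G) \leq d+1$ for a distance-regular graph of diameter $d$, recalled in the introduction: a strongly-regular graph has diameter $2$, giving $q(G) \leq 3$. To apply the first part to a strongly-regular graph $G$ of odd order with $\lambda = 0$, I need to verify its remaining hypotheses. Triangle-freeness is immediate since $\lambda = a_1 = 0$, and non-bipartiteness follows from a one-line parity argument: any bipartite regular graph has equal-sized parts (by counting edges across the bipartition), so its order is even, contradicting the odd-order hypothesis. Combining the two bounds yields $q(G) = 3$.

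I do not anticipate a serious obstacle here: both parts are short deductions from material already in place. The only point requiring a moment of care is the non-bipartiteness of a strongly-regular graph of odd order, but the parity argument above sidesteps any need to invoke the classification of connected bipartite strongly-regular graphs as complete bipartite graphs $K_{n,n}$.
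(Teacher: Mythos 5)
Your proposal is correct and follows the same route as the paper, which derives the first claim directly from Theorem \ref{thm:triangle_free_non_bipartite_q>2} via the observation that $a_1=0$ is equivalent to triangle-freeness for a distance-regular graph, and the second from the diameter bound $q(G)\le d+1$. Your parity argument for non-bipartiteness of an odd-order regular graph is a clean way to fill in a detail the paper leaves implicit.
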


A connected distance-regular graph $G$ of diameter $d$ is called \emph{antipodal} if its distance $d$ graph $G_d$ is a disjoint union of complete graphs. These complete graphs are called the \emph{fibers} of $G$, and all have the same size. We also say $G$ is an \emph{antipodal $r$-cover}, where $r$ is the size of the cliques of $G_d$. When $r=2$, we say \emph{antipodal double-cover}. Let us consider a graph $\overline{G}$ whose vertex set is the set of fibers of $G$, and two such fibers are adjacent whenever there is an edge between them in graph $G$. The graph $\overline{G}$ is called the \emph{folded graph} of $G$. An antipodal double-cover $G$ is a $2$-lift of $\overline{G}$. We note the following two important results regarding antipodal distance-regular graphs.

\begin{proposition}(\cite[Proposition 4.2.2(ii)]{BCN})\label{prop_BCN_1}
Let $G$ be a distance-regular graph of diameter $d \in \{2m,2m+1\}$ with intersection array 
$
\{b_0,b_1,\ldots,b_{d-1};c_1,c_2,\ldots,c_d\}.
$ Then the graph $G$ is an antipodal $r$-cover if and only if $b_i = c_{d-i}$ for all $i = 0,1,\ldots,d$, except for $i = m$, and $r = 1 + \frac{b_m}{c_{d-m}}$. Moreover, $\overline{G}$ is a distance-regular graph of diameter $m$ with intersection array
\begin{align*}
    \{b_0,b_1,\ldots,b_{m-1};c_1,c_2,\ldots,\gamma c_m\},
\end{align*}
where $\gamma = r$ if $d=2m$, $\gamma = 1$ if $d=2m+1$. 
\end{proposition}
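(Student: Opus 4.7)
The plan is to prove both directions of the equivalence by analyzing the distance matrices in the Bose--Mesner algebra, and then to derive the intersection array of the folded graph $\overline{G}$ as a separate calculation. First I would recast antipodality in algebraic form: by definition $G$ is an antipodal $r$-cover iff the distance-$d$ graph $G_d$ is a disjoint union of $K_r$'s, which is equivalent to the matrix identity $A_d^2 = (r-1) I + (r-2) A_d$ (each diagonal block of $A_d$ restricted to a fiber being $J_r - I_r$). Equivalently, writing $Y$ for the $n \times (n/r)$ fiber-incidence matrix, this is $A_d + I = Y Y^T$ with $Y^T Y = r I$.

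For the forward direction, assume $G$ is antipodal. The identity $A_d^2 = (r-1) I + (r-2) A_d$ translates to $p^0_{d,d} = r-1$, $p^d_{d,d} = r-2$, and $p^j_{d,d} = 0$ for $0 < j < d$. Multiplying $A_d$ by $A_i$ in the Bose--Mesner algebra and using the three-term recurrence $A_1 A_j = b_{j-1} A_{j-1} + a_j A_j + c_{j+1} A_{j+1}$, one computes the expansion coefficients of $A_d A_i$. A short induction on $i$ using the commutativity $A_d A_i = A_i A_d$ together with the antipodal identity forces $p^k_{d,i} = 0$ unless $k \in \{i, d-i\}$, with $p^{d-i}_{d,i}$ acting as a ``reflection valency''. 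Equating coefficients of $A_j$ in $(A_d + I) A_1 A_i = A_1 (A_d + I) A_i$ then yields $b_i = c_{d-i}$ for $i \ne m$; at $i = m$ the corresponding coefficient on each side absorbs an extra multiplicity of $r-1$ (reflecting the $r$-fold lift from $\overline{G}$), giving $b_m = (r-1) c_{d-m}$.

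For the reverse direction, assume the intersection array satisfies the stated identities. A telescoping of $k_i = k_{i-1} b_{i-1}/c_i$ using $b_i = c_{d-i}$ for $i \ne m$ and $b_m = (r-1) c_{d-m}$ yields $k_d = r - 1$. A similar coefficient comparison in the Bose--Mesner algebra, combined with the recurrence, forces $p^d_{d,d} = r - 2$ and $p^j_{d,d} = 0$ for $0 < j < d$, recovering the identity $A_d^2 = (r-1) I + (r-2) A_d$. This implies $(A_d + I)/r$ is an idempotent of rank $n/r$ with $0/1$ entries, forcing $A_d$ to be the adjacency matrix of a disjoint union of $K_r$'s.

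For the folded graph $\overline{G}$, define $\pi \colon V(G) \to V(\overline{G})$ by sending each vertex to its fiber. The forward-direction analysis gives $d_{\overline{G}}(\pi(u), \pi(v)) = \min\{d_G(u,v),\, d - d_G(u,v)\}$. For $0 \le i < m$, shortest paths lift uniquely from $\overline{G}$ to $G$, yielding $\overline{b}_i = b_i$ and $\overline{c}_i = c_i$. At the final index $i = m$, $\overline{c}_m$ aggregates contributions from all fiber members of the target: when $d = 2m+1$ only one target vertex per fiber contributes, giving $\overline{c}_m = c_m$ (so $\gamma = 1$); when $d = 2m$ all $r$ fiber members contribute, and the aggregate reduces via the established identities to $\overline{c}_m = r c_m$ (so $\gamma = r$). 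The main obstacle will be the intersection-number bookkeeping at $i = m$: for $r = 2$ the factor $r - 1$ disappears and a direct combinatorial reflection across antipodes suffices, but for $r \ge 3$ the antipodes of $u$ are not all at the same distance from a generic $v$, so $b_m = (r-1) c_{d-m}$ must be extracted algebraically through $(A_d + I)^2 = r(A_d + I)$ rather than through a naive bijection.
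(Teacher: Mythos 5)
First, a point of reference: the paper does not prove this proposition at all --- it is quoted verbatim from Brouwer--Cohen--Neumaier \cite[Proposition 4.2.2(ii)]{BCN} and used as a black box --- so there is no internal argument to measure yours against. Judged on its own, your outline has the right skeleton (encode antipodality as $A_d^2=(r-1)I+(r-2)A_d$, work in the Bose--Mesner algebra, then fold), and your closing remark correctly identifies the danger for $r\ge 3$. But the decisive step is asserted rather than proved, and it is not the routine computation you make it sound like.

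The claim that ``a short induction on $i$ using commutativity and the antipodal identity forces $p^k_{d,i}=0$ unless $k\in\{i,d-i\}$'' is equivalent to: every $w$ with $d(u,w)=d$ satisfies $d(w,v)\in\{d(u,v),\,d-d(u,v)\}$ for every $v$. That statement is essentially the whole content of the proposition, and it does not follow from Bose--Mesner manipulation alone: the recurrence gives $c_{i+1}A_dA_{i+1}=(A_1-a_iI)(A_dA_i)-b_{i-1}(A_dA_{i-1})$, and multiplying by $A_1$ spreads the support of $A_dA_i$ by one index in each direction, so the induction does not confine the support to $\{i,d-i\}$ without extra combinatorial input (the triangle-inequality vanishing $p^k_{d,i}=0$ for $k<d-i$, nonnegativity, the identity $\sum_k k_k\,p^k_{d,i}=k_dk_i$, and the fact that two vertices in one fiber are at distance $0$ or $d$). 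Relatedly, the identity $(A_d+I)A_1A_i=A_1(A_d+I)A_i$ is vacuous as written (both sides are the same matrix); the content must come from expanding the two sides using different partial knowledge, and the ``obvious'' expansion using $A_dA_j=(r-1)A_{d-j}$ is false for $r\ge 3$ and produces wrong relations --- e.g.\ for $d=2m$, comparing the coefficient of $A_m$ at $i=m-1$ under that simplification yields $rc_m=(r-1)b_m+c_m$, i.e.\ $b_m=c_m$ rather than $b_m=(r-1)c_m$. The correct computation needs the two-term form $A_dA_i=\alpha_iA_{d-i}+\beta_iA_i$ together with $k_{d-i}\alpha_i+k_i\beta_i=(r-1)k_i$, and the degenerate middle indices are precisely where $b_m=(r-1)c_{d-m}$ is extracted; none of this bookkeeping appears. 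The same criticism applies to the converse (``a similar coefficient comparison forces $p^d_{d,d}=r-2$ and $p^j_{d,d}=0$''), which is again exactly the hard part. Two smaller points: $(A_d+I)/r$ has entries in $\{0,1/r\}$, not $\{0,1\}$, and the deduction that such an idempotent is block-diagonal with $J_r$ blocks (via $|N[u]|=r$ and $N[u]=N[v]$ when $u\sim v$ in $G_d$) should be spelled out; and in the folded-graph step one must also check that distinct neighbors of $v$ lie in distinct fibers before counting $\overline{c}_m=c_m+b_m$.
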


\begin{proposition}(\cite[Proposition 4.2.3(ii)]{BCN})\label{prop_BCN_2}
    Let $G$ be distance-regular with spectrum $\Psi$, where $\theta \in \Psi$ has multiplicity $m(\theta)$. If $G$ is antipodal of diameter $d\geq 3$, then $\overline{G}$ has a spectrum that is a subset $\overline{\Psi}$ of $\Psi$, and for $\theta \in \overline{\Psi}$ the multiplicity of $\theta$ in $\overline{\Psi}$ and $\Psi$ are the same.  
\end{proposition}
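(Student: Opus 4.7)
The plan is to realise $\overline{G}$ as the quotient of $G$ by its antipodal equivalence relation and to exploit the simultaneous diagonalisation provided by the Bose-Mesner algebra $\mathbb{C}(\mathcal{A})$ of $G$. Set $r := 1 + b_m/c_{d-m}$ (the fibre size granted by Proposition~\ref{prop_BCN_1}), let $A := A(G)$, $A_d$ be the distance-$d$ matrix, and put $P := \tfrac{1}{r}(I + A_d)$. Because $G$ is antipodal, $I + A_d$ is, after suitably permuting rows and columns, block-diagonal with each $r \times r$ block equal to the all-ones matrix $J_r$. Hence $P$ is a real symmetric idempotent whose image $W$ is the space of vectors in $\mathbb{R}^{V(G)}$ that are constant on each fibre. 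Let $\pi \colon V(G) \to V(\overline{G})$ be the quotient map and define the lifting $F \colon \mathbb{R}^{V(\overline{G})} \to \mathbb{R}^{V(G)}$ by $(Ff)(v) = f(\pi(v))$. Then $F$ is injective with image $W$ and $F^T F = r I$.

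The first step is to verify the intertwining relation $A F = F A(\overline{G})$. Unpacking, this says that for every vertex $v \in C$ and every fibre $C'$ adjacent to $C$ in $\overline{G}$, the number of neighbours of $v$ lying in $C'$ depends only on the pair $(C, C')$ and equals $A(\overline{G})_{C,C'}$. This is exactly the content of $\overline{G}$ being a well-defined distance-regular graph with the intersection array supplied by Proposition~\ref{prop_BCN_1}. From the intertwining relation, if $f$ is a $\theta$-eigenvector of $A(\overline{G})$, then $Ff$ is a $\theta$-eigenvector of $A$; by injectivity of $F$, one obtains $\overline{\Psi} \subseteq \Psi$ together with $m_{\overline{G}}(\theta) \le m_G(\theta)$ for each $\theta \in \overline{\Psi}$.

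To upgrade the inequality to equality, I would invoke the structure of $\mathbb{C}(\mathcal{A})$. Since $A_d \in \mathbb{C}(\mathcal{A})$, so is $P$; being an idempotent in $\mathbb{C}(\mathcal{A})$, it decomposes as a sum of primitive idempotents, $P = \sum_{i \in S} E_i$ for some subset $S \subseteq \{0,1,\ldots,d\}$. Consequently $W = \operatorname{Im}(P) = \bigoplus_{i \in S} \operatorname{Im}(E_i)$, so each $\theta_i$-eigenspace $\operatorname{Im}(E_i)$ of $A$ is either entirely contained in $W$ (when $i \in S$) or meets $W$ trivially (when $i \notin S$). Combined with the first step, this forces $\theta_i \in \overline{\Psi}$ precisely when $i \in S$, and in that case $m_{\overline{G}}(\theta_i) = \dim \operatorname{Im}(E_i) = m_G(\theta_i)$, as required.

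The main obstacle is the intertwining relation of the first step: one must show that for adjacent fibres $C, C'$ the number of neighbours in $C'$ of a vertex $v \in C$ is a constant depending only on the pair $(C, C')$. In the present set-up this is essentially packaged inside Proposition~\ref{prop_BCN_1}; a self-contained argument would need to deduce this constancy directly from antipodality and distance-regularity, for instance by showing that if $v \in C$ has a neighbour in $C'$, then every antipode $v' \in C$ of $v$ has the same number of neighbours in $C'$ (using that $v$ and $v'$ lie at distance $d$ and applying the intersection numbers $b_i, c_i$). Once this is in hand, the rest of the proof is a routine manipulation of commuting symmetric matrices together with their joint eigenspace decomposition.
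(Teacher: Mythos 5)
The paper does not prove this statement; it is quoted verbatim from Brouwer--Cohen--Neumaier (Proposition 4.2.3(ii)), so there is no in-paper argument to compare against. Your proof is correct and is essentially the standard one (and, as far as I recall, essentially BCN's own): the fibre partition is equitable, the lift $F$ intertwines $A(G)$ with $A(\overline{G})$ and identifies $A(\overline{G})$ with the restriction of $A(G)$ to the fibre-constant subspace $W$, and the fact that $\tfrac{1}{r}(I+A_d)$ is an idempotent of the Bose--Mesner algebra forces $W$ to be a direct sum of whole eigenspaces $\operatorname{Im}(E_i)$, which yields both $\overline{\Psi}\subseteq\Psi$ and the preservation of multiplicities. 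The one point you rightly flag --- that every vertex of a fibre $C$ has exactly one neighbour in each fibre adjacent to $C$ --- closes cleanly: since $d\geq 3$, a vertex has at most one neighbour in any fibre (two vertices of a fibre are at distance $d>2$), so its $b_0$ neighbours lie in $b_0$ distinct fibres, and Proposition \ref{prop_BCN_1} gives that $\overline{G}$ is $b_0$-regular, so these exhaust the fibres adjacent to $C$.
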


Using the above two propositions together with Lemma \ref{lem:Bilu_and_Linial}, we obtain the following result.
\begin{proposition}\label{prop:double_cover_drg}
    Let $G$ be a distance-regular graph with diameter $d$. Suppose there exists a distance-regular graph $G'$ with diameter $2d$ that is an antipodal double cover of $G$. Then $q(G) \leq \dot{q}(G) \leq d$. 
\end{proposition}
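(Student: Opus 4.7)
The plan is to exhibit a signature $\sigma$ on $G$ whose signed adjacency matrix $A(\dot{G})$ has at most $d$ distinct eigenvalues, which yields $\dot{q}(G) \le d$; combined with the already-recorded inequality $q(G) \le \dot{q}(G)$, this gives the conclusion. The candidate signature comes for free from the hypothesis: because $G'$ is an antipodal double-cover of $G$ with $G$ as its folded graph, $G'$ is a $2$-lift of $G$, and so by the bijection described just before Lemma~\ref{lem:Bilu_and_Linial} there is a signature $\sigma$ on $G$ with the property that $G'$ is obtained from $(G,\sigma)$ by placing parallel edges on positive signs and cross edges on negative ones. This is the $\dot{G}=(G,\sigma)$ we will analyze.

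The next step is to count distinct eigenvalues. Since $G$ and $G'$ are distance-regular of diameters $d$ and $2d$, they have exactly $d+1$ and $2d+1$ distinct eigenvalues, respectively. Now Lemma~\ref{lem:Bilu_and_Linial} says that, as multisets, $\mathrm{spec}(A(G'))$ is the disjoint union of $\mathrm{spec}(A(G))$ and $\mathrm{spec}(A(\dot{G}))$, so in particular every eigenvalue of $A(\dot{G})$ lies in $\mathrm{spec}(A(G'))$. On the other hand, Proposition~\ref{prop_BCN_2} tells us that each eigenvalue $\theta$ of $G$ has the same multiplicity in $G'$ as in $G$. Comparing with the multiplicity formula from Lemma~\ref{lem:Bilu_and_Linial} forces the multiplicity of every such $\theta$ in $A(\dot{G})$ to be zero; equivalently, $\mathrm{spec}(A(\dot{G}))$ is disjoint from $\mathrm{spec}(A(G))$. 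Therefore $\mathrm{spec}(A(\dot{G}))$ is contained in the set of $2d+1-(d+1)=d$ eigenvalues of $G'$ that are not eigenvalues of $G$, and in particular $q(A(\dot{G}))\le d$.

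Putting these pieces together, $A(\dot{G}) \in \dot{\mathcal{S}}(G)$ has at most $d$ distinct eigenvalues, so $\dot{q}(G) \le d$, and the inequality $q(G)\le\dot{q}(G)$ noted in the introduction completes the proof. There is no real obstacle here — the substantive content has been packaged into Proposition~\ref{prop_BCN_2} and Lemma~\ref{lem:Bilu_and_Linial} — but the only point to take care with is keeping straight that it is Proposition~\ref{prop_BCN_2} (identifying multiplicities across the cover) rather than Proposition~\ref{prop_BCN_1} (which is just needed implicitly to be consistent with the diameter bookkeeping $2d=2m$, $d=m$) that does the actual work of preventing a repeat eigenvalue from being wasted on $A(\dot{G})$.
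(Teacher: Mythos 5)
Your proof is correct and follows essentially the same route as the paper: realize $G'$ as a $2$-lift of $G$, apply Lemma~\ref{lem:Bilu_and_Linial} to split $\mathrm{spec}(A(G'))$ into $\mathrm{spec}(A(G))$ and $\mathrm{spec}(A(\dot G))$, and use Proposition~\ref{prop_BCN_2} to see that the folded graph's eigenvalues exhaust their multiplicities in $G'$, leaving only $2d+1-(d+1)=d$ distinct eigenvalues for the signed matrix. Your write-up is in fact more careful than the paper's (which even misstates the lift direction as ``$G$ is a $2$-lift of $G'$''), but the substance is identical.
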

\begin{proof}
The adjacency matrix of $G$ has $d+1$ distinct eigenvalues, while that of $G'$ has $2d + 1$. Since $G$ is a $2$-lift of $G'$, by Proposition \ref{prop_BCN_2} and Lemma \ref{lem:Bilu_and_Linial}, there exists a signed adjacency matrix of $G$ with $2d + 1 - (d + 1) = d$ distinct eigenvalues. Thus, the result follows.
\end{proof}

\begin{rem}
In Proposition \ref{prop:double_cover_drg}, we considered only the antipodal double cover since our focus is on the real field. If we consider the complex field, then a similar result can be derived for an antipodal $r$-covers by using the representation theory of finite groups.
\end{rem}

Both the Hamming graph $H(d,2)$ and the Johnson graph $J(2d,d)$ are antipodal double covers for all $d\geq 2$. Their folded graphs are known as the \emph{folded cube} $\overline{H(d,2)}$ and the \emph{folded Johnson graph} $\overline{J(2d,d)}$. The folded cube $\overline{H(d,2)}$ is isomorphic to the graph obtained from $H(d-1,2)$ by joining every pair of vertices at distance $d-1$. By Proposition \ref{prop:double_cover_drg}, if $d$ is even, then $\dot{q}(\overline{H(d,2)}) \leq \frac{d}{2}$, and $\dot{q}(\overline{J(2d,d)})\leq \frac{d}{2}$. However, it follows from \cite[Proposition 1.3]{alon2021unitary} that $\dot{q}(\overline{H(d,2)}) = 2$ when $d \equiv 0,1 \pmod{4}$ (see also \cite[Section 9.2E]{BCN} and \cite[Section 3.3]{iga2023eigenvalues}). Additionally, for $d\equiv 2,3 \pmod{4}$, it follows from \cite[Proposition 1.3]{alon2021unitary} that $\dot{q}_{\mathbb{C}}(\overline{H(d,2)}) = 2$ over the complex field.

\subsubsection{Some small distance-regular graphs}\label{sec:small_drg}
In this section, we investigate the minimum number of distinct eigenvalues of several small distance-regular graphs using the results discussed above. 
For the sake of brevity, detailed calculations are omitted, and only the conclusions are summarized in Table \ref{table1}, with the exception of the following two graphs. 

Let $G$ be the Hamming graph $H(2,3)$, a strongly-regular of order $9$ with intersection array $\{4,2;1,2\}$. The graph $G$ contains $9$ distinct $C_4$ cycles, with each edge in $E(G)$ appearing twice among these cycles (see Figure \ref{fig:H(2,3)}). Therefore, the system of equations $f = 0$ for $f \in \Phi_2(H(2,3))$ has no solution in which all variables are non-zero. By Proposition \ref{prop:q(G)>t}, we have that $q(H(2,3)) = 3$. We use this fact in Section \ref{section_Hamming} to provide a more general result.  

\begin{figure}[ht]
    \centering
\begin{tikzpicture}[scale=1.2, every node/.style={circle, draw, fill=black, inner sep=2pt}]
\node (1-1) at (0,0) {};
\node (1-2) at (1,0) {};
\node (1-3) at (2,0) {};

\node (2-1) at (0,1) {};
\node (2-2) at (1,1) {};
\node (2-3) at (2,1) {};

\node (3-1) at (0,2) {};
\node (3-2) at (1,2) {};
\node (3-3) at (2,2) {};

\foreach \i in {1,2,3} {
  \draw[line width=0.5mm] (\i-1) -- (\i-2);
  \draw[line width=0.5mm] (\i-2) -- (\i-3);
}

\foreach \j in {1,2,3} {
  \draw[line width=0.5mm] (1-\j) -- (2-\j);
 \draw[line width=0.5mm] (2-\j) -- (3-\j);
}

\draw[bend right,line width=0.5mm] (0,0) to (2,0);
\draw[bend left,line width=0.5mm] (0,0) to (0,2);
\draw[bend right,line width=0.5mm] (2,2) to (0,2);
\draw[bend right,line width=0.5mm] (2,0) to (2,2);
\draw[bend left,line width=0.5mm] (1,0) to (1,2);
\draw[bend left,line width=0.5mm] (0,1) to (2,1);

\end{tikzpicture}
\hspace{1.5cm}
\begin{tikzpicture}[scale=1.2, every node/.style={circle, draw, fill=black, inner sep=2pt}]

\draw[dashed,bend right, draw=red, thick] (1,1) to (1,2);
\draw[dashed,bend left, draw=red, thick] (1,1) to (2,1);
\draw[dashed,bend left, draw=red, thick] (2,2) to (1,2);
\draw[dashed,bend left, draw=red, thick] (2,1) to (2,2);

\draw[dotted,bend right, draw=blue, thick] (0,0) to (0,1);
\draw[dotted,bend left, draw=blue, thick] (0,0) to (1,0);
\draw[dotted,bend left, draw=blue, thick] (1,1) to (0,1);
\draw[dotted,bend left, draw=blue, thick] (1,0) to (1,1);

\draw[bend right, draw=black, thick,dash pattern=on 4pt off 2pt on 1pt off 2pt] (1,0) to (1,1);
\draw[bend left, draw=black, thick,dash pattern=on 4pt off 2pt on 1pt off 2pt] (1,0) to (2,0);
\draw[bend left, draw=black, thick,dash pattern=on 4pt off 2pt on 1pt off 2pt] (2,0) to (2,1);
\draw[bend left, draw=black, thick,dash pattern=on 4pt off 2pt on 1pt off 2pt] (2,1) to (1,1);

\draw[bend right, draw=green, thick,dash pattern=on 2pt off 1pt on 1pt off 2pt] (0,1) to (0,2);
\draw[bend left, draw=green, thick,dash pattern=on 2pt off 1pt on 1pt off 2pt] (1,1) to (1,2);
\draw[bend left, draw=green, thick,dash pattern=on 2pt off 1pt on 1pt off 2pt] (0,1) to (1,1);
\draw[bend right, draw=green, thick,dash pattern=on 2pt off 1pt on 1pt off 2pt] (0,2) to (1,2);

\draw[bend right, draw=orange, line width=0.7mm, dotted] (0,0) to (0,2);
\draw[bend left, draw=orange, line width=0.7mm, dotted] (1,0) to (1,2);
\draw[bend right, draw=orange, line width=0.7mm, dotted] (0,0) to (1,0);
\draw[bend left, draw=orange, line width=0.7mm, dotted] (0,2) to (1,2);

\draw[bend right, draw=cyan, line width=0.7mm, dash pattern=on 2pt off 1pt on 1pt off 2pt] (1,0) to (1,2);
\draw[bend left, draw=cyan, line width=0.7mm, dash pattern=on 2pt off 1pt on 1pt off 2pt] (2,0) to (2,2);
\draw[bend right, draw=cyan, line width=0.7mm, dash pattern=on 2pt off 1pt on 1pt off 2pt] (1,0) to (2,0);
\draw[bend left, draw=cyan, line width=0.7mm, dash pattern=on 2pt off 1pt on 1pt off 2pt] (1,2) to (2,2);

\draw[bend left, draw=brown, line width=0.7mm, dashed] (0,1) to (2,1);
\draw[bend right, draw=brown, line width=0.7mm, dashed] (0,2) to (2,2);
\draw[bend left, draw=brown, line width=0.7mm, dashed] (0,1) to (0,2);
\draw[bend right, draw=brown, line width=0.7mm, dashed] (2,1) to (2,2);

\draw[bend left, draw=yellow!80!black, line width=0.7mm,dash pattern=on 4pt off 2pt on 1pt off 2pt] (0,0) to (2,0);
\draw[bend right, draw=yellow!80!black, line width=0.7mm,dash pattern=on 4pt off 2pt on 1pt off 2pt] (0,1) to (2,1);
\draw[bend left, draw=yellow!80!black, line width=0.7mm,dash pattern=on 4pt off 2pt on 1pt off 2pt] (0,0) to (0,1);
\draw[bend right, draw=yellow!80!black, line width=0.7mm,dash pattern=on 4pt off 2pt on 1pt off 2pt] (2,0) to (2,1);

\draw[bend right, draw=pink!80!red, line width=0.7mm] (0,0) to (2,0);
\draw[bend left, draw=pink!80!red, thick,line width=0.7mm] (0,2) to (2,2);
\draw[bend left, draw=pink!80!red, thick,line width=0.7mm] (0,0) to (0,2);
\draw[bend left, draw=pink!80!red, thick,line width=0.7mm] (2,2) to (2,0);

\node (1-1) at (0,0) {};
\node (1-2) at (1,0) {};
\node (1-3) at (2,0) {};

\node (2-1) at (0,1) {};
\node (2-2) at (1,1) {};
\node (2-3) at (2,1) {};

\node (3-1) at (0,2) {};
\node (3-2) at (1,2) {};
\node (3-3) at (2,2) {};
\end{tikzpicture}
    \caption{The Hamming graph $H(2,3)$ (left)  alongside its cycle decomposition into nine $C_4$ cycles (right).}
    \label{fig:H(2,3)}
\end{figure}
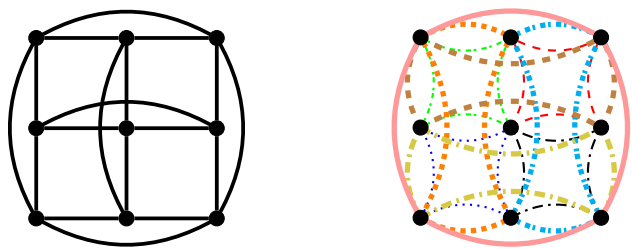

Let $G$ be the Heawood graph, a distance-regular graph with intersection array $\{3, 2, 2; 1, 1, 3\}$. Since $c_2 = 1$ so by Proposition \ref{prop:c_i_are_1} we have $3 \leq q(G) \leq 4$. However, after that we do not have any general technique to apply. Consider the set $\Phi_3(G)$, which contains $28$ polynomials, each expressed as a sum of three square-free monomials of degree $3$. There are $21$ variables in total, with each appearing in exactly $12$ polynomials from $\Phi_3(G)$. If $q(G) = 3$, it would require assigning values $1$ or $-1$ to each variable such that none of the $28$ polynomials evaluates to $3$ or $-3$. Using a brute force computer search we determined  that no such assignment exists. Therefore, $q(G)$ must be $4$.

\begin{table}[ht]
\centering
\begin{tabular}{|r|l|l|l|l|l|}
\hline
$d$ & $n$ & intersection array &  graph name  & $q(G)$ & explanation\\ \hline
$2$ & $9$& $\{4,2;1,2\}$ & Hamming-$H(2,3)$ & $3$ & Prop. \ref{prop:q(G)>t}, $s = 9, j=2$.\\
$2$ & $16$ & $\{5,4;1,2\}$ & Clebsch graph & $2\ (=\dot{q}(G))$ & Prop. \ref{prop:double_cover_drg}, Wells graph.\\ 
$2$ & $16$ & $\{6,3;1,2\}$ & Shrikhande graph & $3$ & Prop. \ref{prop:q(G)>t}, $s= 11, j =2$.\\ 
$2$ & $35$ & $\{16,9;1,8\}$ & folded Johnson $J(8,4)$ & $2\ (=\dot{q}(G))$ & Prop. \ref{prop:double_cover_drg},  $J(8,4)$. \\
$2$ & $64$ & $\{28,15;1,12\}$ & folded halved $8$-cube & $2\ (=\dot{q}(G))$ & Prop. \ref{prop:double_cover_drg}, $\frac{H(8,2)}{2}$. \\
$2$ & $77$ & $\{16,15;1,4\}$ & $M_{22}$ graph &  $3$ & Prop. \ref{cor_triangle_free_drg}.\\
$3$ & $12$ & $\{5,2,1;1,2,5\}$ & Icosahedron graph & $3$ or $4$ & Prop. \ref{prop:q(G)>t}, $s = 15, j = 2$.\\
$3$ & $14$ & $\{3,2,2;1,1,3\}$ & Heawood graph & $4$ & See above. \\ 
$3$ & $14$ & $\{4,3,2;1,2,4\}$ & distance $3$ of Heawood & $2\ (=\dot{q}(G))$ & The graph $S_{14}$ from  \cite{mckee2007integer}.\\  
$3$ & $35$ & $\{4,3,3;1,1,2\}$ & Kneser $K(7,3)$ & $4$ & Prop. \ref{prop:q(G)>t}, $s = 9, j = 3$. \\ 
$3$ & $57$ & $\{6,5,2;1,1,3\}$ & Perkel graph & $3$ or $4$ & Prop. \ref{cor_triangle_free_drg}. \\
$4$ & $28$ & $\{3,2,2,1;1,1,1,2\}$ & Coxeter graph & $5$ & Prop. \ref{prop:q(G)>t}, $s = 21, j = 4$. \\
\hline
\end{tabular}
\vspace{0.1cm}
\caption{Summary of some small distance-regular graphs and their $q(G)$ values.}
\label{table1}
\end{table}

\subsection{Simplicial complexes}\label{sec:simp_comp}
We begin by introducing some basic definitions of abstract simplicial complexes and the associated higher-order Laplacian matrices. For further details and properties, see for e.g., \cite[Section 3.12]{brouwer2011spectra}, \cite[Section 2]{bachoc2019theta}, and \cite[Section 2]{reitz2020higher}. Let $V = \{v_1,v_2,\ldots,v_n\}$ be a finite set. An \emph{abstract simplicial complex}  $\Delta$ is defined as a collection of subsets of $V$ such that, if $A \in \Delta$ and $B \subseteq A$, then $B \in \Delta$. The subsets belonging to $\Delta$ are referred to as the \emph{faces} of $\Delta$. The \emph{dimension of a face} $A \in \Delta$ is given by $|A| - 1$. Therefore, the empty face $\emptyset$ has dimension $-1$, while singleton sets have dimension $0$, and so forth. Let $\Delta_d$ denote the set of all $d$-dimensional faces of $\Delta$. The \emph{dimension of a simplicial complex}, denoted by $\text{dim}(\Delta)$, is the maximum dimension of any face in $\Delta$.

For instance, a graph can be viewed as a simplicial complex of dimension $1$. Another example is the power set $\mathcal{P}(V)$ of a vertex set $V$. Given a graph $G$, the \emph{clique complex}, \emph{independence complex}, and \emph{matching complex} are simplicial complexes associated with $G$, defined over the vertex set $V$. The faces of these complexes correspond to the cliques, independent sets, and matchings of $G$, respectively. In particular, the power set $\mathcal{P}(V)$ is the clique complex of the complete graph on the vertex set $V$.

We fix an ordering on the vertex set $V$ as $v_1 < v_2 < \cdots < v_n$. This ordering induces a corresponding order on each face. Therefore, whenever we denote a face $F = \{x_1, x_2, \dots, x_{d+1}\}$, it is understood that $x_1 < x_2 < \cdots < x_{d+1}$. For $d = 0,1,\ldots,\text{dim}(\Delta)$, consider the matrix $W_d$ whose rows and columns are indexed by $\Delta_{d-1}$ and $\Delta_{d}$, respectively, and the $(H,F)$-entry is 

\begin{align}\label{eqn_W_d_sim_comp}
    W_{d}(H,F) = \begin{cases}
        (-1)^{i-1}, &\text{if } F = \{x_1,x_2,\ldots,x_{d+1}\} \text{ and } H = F\setminus\{x_i\},\\
        0, &\text{otherwise}. \end{cases}
\end{align}
Let us define the \emph{down-Laplacian matrix} as $L_d^{\downarrow} = W_{d}^TW_{d}$ for $d=0,1,\ldots,\text{dim}(\Delta)$. While the \emph{up-Laplacian matrix} is defined as $L_d^{\uparrow} = W_{d+1}W_{d+1}^T$ for $d=0,1,\ldots,\text{dim}(\Delta)-1$. Note that the off-diagonal entries of $L_d^\downarrow$ and $L_d^\uparrow$ are either $0$ or $\pm 1$. Moreover, the diagonal entries of $L_d^\downarrow$ are all equal to $d + 1$, while the diagonal entries of $L_d^\uparrow$ may vary.

\begin{rem} In the literature, one typically begins by considering the vector spaces of all functions from $\Delta_d$ to $\mathbb{R}$ for each $d$, and then defines the boundary and co-boundary maps, with their matrix representations given by $W_d$ and $W_d^T$, respectively. Similarly, the down-Laplacian and up-Laplacian are introduced as self-adjoint, positive semidefinite operators on these function spaces. However, since this paper focuses primarily on matrices, we omit these details and concentrate solely on their matrix representations.    
\end{rem}

For $d=0,1,\ldots,\text{dim}(\Delta)$, we define a simple graph $\mathcal{G}_d^{\downarrow}$, where the vertices represent all $d$-dimensional faces of $\Delta$, and two faces are adjacent if and only if their intersection is a $(d-1)$-dimensional face of $\Delta$. Similarly, for $d=0,1,\ldots,\text{dim}(\Delta)-1$, we define a simple graph $\mathcal{G}_d^{\uparrow}$, where the vertices are all $d$-dimensional faces of $\Delta$, and two faces are adjacent if and only if their union forms a $(d+1)$-dimensional face of $\Delta$. Consequently, $L_d^\downarrow \in \mathcal{S}(\mathcal{G}_d^{\downarrow})$ and $L_d^\uparrow \in \mathcal{S}(\mathcal{G}_d^{\uparrow})$. Furthermore, if we define matrices $\dot{L}_d^\downarrow$ and $\dot{L}_d^\uparrow$, where the diagonal entries are zero and the off-diagonal entries are the same as in $L_d^\downarrow$ and $L_d^\uparrow$, respectively, then $\dot{L}_d^\downarrow \in \mathcal{\dot{S}}(\mathcal{G}_d^{\downarrow})$ and $\dot{L}_d^\uparrow \in \mathcal{\dot{S}}(\mathcal{G}_d^{\uparrow})$. Thus, knowing the spectra of the down-Laplacian and up-Laplacian matrices of a simplicial complex $\Delta$, may allow us to provide upper bounds on the number of distinct eigenvalues, $q$ and $\dot{q}$, of the associated graphs.

For a given graph $G$ of order $n$, viewed as a $1$-dimensional simplicial complex, $\mathcal{G}_0^{\downarrow}$ is the complete graph of order $n$, $\mathcal{G}_0^{\uparrow}$ is the graph $G$ itself, and $\mathcal{G}_1^{\downarrow}$ is the line graph of $G$. The matrix $L_0^{\downarrow}$ is the all-ones matrix, while $L_0^{\uparrow}$ is the standard Laplacian matrix of $G$. For the power set simplicial complex $\mathcal{P}(V)$ with $|V| = n$, it is known that the only distinct eigenvalues for both $L_d^{\downarrow}$ and $L_d^{\uparrow}$ are $0$ and $n$ for all $d \geq 0$ (see, for example, \cite[Example 2.2]{bachoc2019theta} and \cite[Section 3.12]{brouwer2011spectra}). Interestingly, $\mathcal{G}_d^{\downarrow}$ and  $\mathcal{G}_d^{\uparrow}$ correspond to the Johnson graph $J(n, d+1)$. Consequently, it follows from the above discussion that $q(J(n, d+1)) = \dot{q}(J(n, d+1)) = 2$. We consider the Johnson graphs in Section \ref{section_Johnson}. For the sake of completeness and continuity in the arguments for the new results, we compute the eigenvalues of the down-Laplacian and up-Laplacian of the power set simplicial complex using the method of induction in Section \ref{section_Johnson}.

Consider the clique complex $\Delta = \Delta(d,n)$ of the complete $d$-partite graph $K_{d \times n}$. The dimension of $\Delta$, $\text{dim}(\Delta)$, is equal to $d-1$, since the largest cliques in $K_{d\times n}$ have size $d$. Furthermore, there exists a one-to-one correspondence between these largest cliques and the vertex set of the Hamming graph $H(d,n)$. In fact, the graph $\mathcal{G}_{\text{dim}(\Delta)}^{\downarrow}$ is isomorphic to the Hamming graph $H(d, n)$. However, the down Laplacian matrix $L_{\text{dim}(\Delta)}^{\downarrow}$ equals $dI+A(H(d,n))$, and therefore, unlike the Johnson graph, it is not as useful in terms of providing a better upper bound for $\dot{q}(H(d,n))$. Observe that if $n=2$, then the graph $\mathcal{G}_{\text{dim}(\Delta)-1}^{\uparrow}$ is isomorphic to the line graph of $H(d,2)$. In Section \ref{section_Hamming}, we revisit this perspective on the Hamming graph.

\section{The Johnson graphs}\label{section_Johnson}
Let $n,d \in \mathbb{N}$, $n\geq d+1$, and let $[n]:= \{1,2,\ldots,n\}$. Recall that the \emph{Johnson graph} $J(n,d)$ is the graph whose vertices are the $d$-subsets of $[n]$, where two subsets are adjacent when their intersection has size $d-1$. Since $J(n,d)$ is isomorphic to $J(n,n-d)$ by an isomorphism that maps a subset to its complement so we may assume that $n \geq 2d$. However, we do not need this assumption at most instances. We denote the distance-$j$ graph of $J(n,d)$ by $J(n,d,j)$ for $2\leq j\leq d$. Note that two $d$-subsets are adjacent in $J(n,d,j)$ if and only if their intersection has size $d-j$. The Johnson graph $J(n,d)$ is distance-regular of diameter $d$, and has intersection array given by $b_i = (d-i)(n-d-i)$ and $c_i = i^2$ for all $i = 0,1,\ldots,d$ (see \cite[Theorem 9.1.2]{BCN}).

In what follows, for the sake of clarity, we find it more convenient to use another description of Johnson graphs. Which is in terms of binary words instead of subsets. Thus, we introduce some notations. A \emph{binary word} is a finite string of $0$s and $1$s. Let $X(\alpha)$ and $Z(\alpha)$ denote the sets of indices where $1$s and $0$s occur in a binary word $\alpha$, respectively. The \emph{weight} of $\alpha$ is given by the cardinality $|X(\alpha)|$, and the \emph{length} of $\alpha$ is given by $|X(\alpha)|+|Z(\alpha)|$. Let $\mathcal{B}_{n,d}$ be the set of all binary words of length $n$ and weight $d$, and let $\mathcal{B}_n$ be the set of all binary words of length $n$. We fix the lexicographic ordering on $\mathcal{B}_{n,d}$ and $\mathcal{B}_n$. 

Let $\alpha \in \mathcal{B}_{n,d}$. We denote the set of indices of $0$s before the first occurrence of $1$ in $\alpha$ by $Z_0(\alpha)$, and the set of indices of $0$s after the $d$-th occurrence of $1$ by $Z_d(\alpha)$. For $i = 1,\ldots,d-1$, the set of indices of $0$s between the $i$-th and $(i+1)$-th occurrence of $1$ in $\alpha$ is denoted by $Z_i(\alpha)$. Additionally, let $z_i(\alpha) = |Z_i(\alpha)|$ represents the cardinality of each of these sets. Note that $Z(\alpha) = \cup_{i=0}^dZ_{i}(\alpha)$ and $\sum_{i=0}^dz_i(\alpha) = n-d$.

There is one-to-one correspondence between $\mathcal{B}_{n,d}$ and the set of all $d$-subsets of $[n]$, with a binary word $\alpha$ corresponds to the $d$-subset $X(\alpha)$. Let $\alpha = a_1a_2\cdots a_n$ and $\beta = b_1b_2\cdots b_n$ are two elements of $\mathcal{B}_n$. Then the \emph{hamming distance} between them is defined as $d_H(\alpha,\beta) := |\{i\ |a_i\neq b_i\}|$. Notice that $d_{H}(\alpha,\beta) = |X(\alpha) \triangle X(\beta)|$ where $\triangle$ denotes the symmetric difference of two sets. 

Thus, another description of the Johnson graph $J(n,d)$ is the graph whose vertex set is $\mathcal{B}_{n,d}$, and two binary words $\alpha,\beta$ are adjacent if and only if $d_H(\alpha,\beta) = 2$. Suppose $d_H(\alpha,\beta) = 2$, and $i,j$ are the two indices such that $a_i\neq b_i$ and $a_j \neq b_j$. Then we denote the number of $1$s in $\alpha$ and $\beta$ between the $i$-th and $j$-th indices as  
$
 h(\alpha,\beta) := |\{\ell\ | a_{\ell}=b_{\ell} =1, i<\ell<j\}|.   
$
Let us consider the following signed adjacency matrix $A_{n,d}$ of the Johnson graph $J(n,d)$ whose $(\alpha,\beta)$ entry is given by
\begin{align}\label{eqn_A_entrywise}
    A_{n,d}(\alpha,\beta) = \begin{cases}
    (-1)^{h(\alpha,\beta)}, &\text{if } d_H(\alpha,\beta)= 2,\\
    0, &\text{otherwise}.
    \end{cases}
\end{align}
We consider another matrix $W_{n,d}$ whose rows and columns are indexed by $\mathcal{B}_{n,d}$ and $\mathcal{B}_{n,d+1}$, respectively, and the $(\alpha,\beta)$-entry is
\begin{align}\label{eqn_W_entrywise}
    W_{n,d}(\alpha,\beta) = \begin{cases}
        (-1)^{i-1}, &\text{if } X(\beta) = \{x_1,x_2,\ldots,x_{d+1}\} \text{ and } X(\alpha) = X(\beta)\setminus\{x_i\},\\
        0, &\text{otherwise}.
    \end{cases}
\end{align}
Observe that $W_{n,d}$ is identical to the matrix $W_d$ in Equation \ref{eqn_W_d_sim_comp} for the power set simplicial complex $\mathcal{P}(V)$ where $|V| = n$. Additionally, it is important to highlight that the matrix $W_{n,d}$ represents a signed version of specific instances of Wilson matrices. Wilson matrices play a significant role in the Bose-Mesner algebra of Johnson graphs (see, for example, \cite{GMbook}). The following result is one of our main results.

\begin{theorem}\label{Main_thm_1}
    Let $n, d \in \mathbb{N}$, $n\geq d+1$. Then the eigenvalues of $A_{n,d}$ are $-d$ and $n-d$, with multiplicities are $\binom{n-1}{d}$ and $\binom{n-1}{d-1}$. Moreover, the eigenspace for $-d$ is $\text{col}(W_{n,d})$, and for $n-d$ it is $\text{row}(W_{n,d-1})$. In particular, we have $\dot{q}(J(n,d))= q(J(n,d))=2$. 
\end{theorem}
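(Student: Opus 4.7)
The plan is to show that $A_{n,d}$ is annihilated by the quadratic $(x+d)(x-(n-d))$, so it has at most two distinct eigenvalues, and then to read off the multiplicities from the trace. The key is to express $A_{n,d}$ in two complementary ways as a Gram-type product of the signed incidence matrices $W_{n,d-1}$ and $W_{n,d}$, i.e.\ to identify $A_{n,d}+dI$ and $(n-d)I-A_{n,d}$ with the down- and up-Laplacians on weight-$d$ words (specialized to the power-set simplicial complex discussed in Section \ref{sec:simp_comp}).

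First I would prove the two matrix identities
\begin{align*}
W_{n,d-1}^T W_{n,d-1} &= A_{n,d} + dI,\\
W_{n,d} W_{n,d}^T &= (n-d)I - A_{n,d}.
\end{align*}
Each is checked entrywise. The diagonal entries are immediate counts: a weight-$d$ word contains $d$ sub-words of weight $d-1$ and lies in $n-d$ super-words of weight $d+1$. For the off-diagonal case, take $\beta,\beta'\in\mathcal{B}_{n,d}$ differing in exactly two positions $i_0<j_0$; there is a unique common weight-$(d-1)$ sub-word (respectively a unique weight-$(d+1)$ super-word), so only one term contributes to the sum, and a short sign calculation shows that the product of the two $(-1)^{i-1}$-type signs coming from $W_{n,d-1}$ or $W_{n,d}$ matches $\pm(-1)^{h(\beta,\beta')}$, with the sign depending on whether one has $W^TW$ or $WW^T$. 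This careful tracking of the incidence signs is the main obstacle; once done, the rest of the proof is formal.

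Next I would establish the boundary-squared-is-zero identity $W_{n,d-1}W_{n,d}=0$, a standard fact for the simplicial chain complex of a simplex: for any $(\alpha,\gamma)\in\mathcal{B}_{n,d-1}\times\mathcal{B}_{n,d+1}$ with $X(\alpha)\subset X(\gamma)$, the two intermediate weight-$d$ words obtained by adjoining one or the other of the extra elements of $X(\gamma)\setminus X(\alpha)$ contribute with opposite signs. Combining the three identities then gives
$$(A_{n,d}+dI)\bigl((n-d)I-A_{n,d}\bigr) \;=\; W_{n,d-1}^T\bigl(W_{n,d-1}W_{n,d}\bigr)W_{n,d}^T \;=\; 0,$$
so the minimal polynomial of $A_{n,d}$ divides $(x+d)(x-(n-d))$ and all eigenvalues lie in $\{-d,n-d\}$. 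For $d\geq 1$ and $n\geq d+1$, $J(n,d)$ has at least one edge, so $A_{n,d}$ is not a scalar matrix and both values occur. Writing $m_-$ and $m_+$ for the multiplicities of $-d$ and $n-d$, the system $m_-+m_+=\binom{n}{d}$ and $-dm_-+(n-d)m_+=\mathrm{trace}(A_{n,d})=0$ yields $m_-=\binom{n-1}{d}$ and $m_+=\binom{n-1}{d-1}$.

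Finally, the eigenspace descriptions drop out of the two identities: the $(-d)$-eigenspace equals $\ker(W_{n,d-1}^TW_{n,d-1})=\ker(W_{n,d-1})$, and the $(n-d)$-eigenspace equals $\ker(W_{n,d}W_{n,d}^T)=\ker(W_{n,d}^T)=\mathrm{col}(W_{n,d})^{\perp}$. Since these two eigenspaces are mutually orthogonal and together span $\mathbb{R}^{\binom{n}{d}}$, they must in fact coincide with $\mathrm{col}(W_{n,d})$ and $\mathrm{row}(W_{n,d-1})$ respectively, as claimed. Since $A_{n,d}\in\dot{\mathcal{S}}(J(n,d))$ has exactly two distinct eigenvalues and $J(n,d)$ has edges, we conclude $2\leq q(J(n,d))\leq\dot{q}(J(n,d))\leq 2$.
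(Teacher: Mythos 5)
Your proposal is correct, and it is built on the same two structural facts as the paper's proof: the Gram identifications $W_{n,d-1}^TW_{n,d-1}=A_{n,d}+dI$ and $W_{n,d}W_{n,d}^T=(n-d)I-A_{n,d}$ (the paper's $Q_{n,d}$ and $P_{n,d}$; see (\ref{eqn_Qnk_entrywise}), (\ref{eqn_Pnk_entrywise}), (\ref{eqn_A_in_terms_of_P})) together with $W_{n,d-1}W_{n,d}=O$ (Lemma \ref{lemma_prod_of_W_with_W}). Where you genuinely diverge is in how the spectrum is then extracted. The paper proves $W_{n,d-1}W_{n,d}=O$, $P_{n,d}W_{n,d}=nW_{n,d}$, and $\mathrm{rank}(W_{n,d})=\binom{n-1}{d}$ by double induction on $(d,n)$ via the recursive block partition (\ref{eqn_Wkn}), and assembles these into Corollary \ref{cor_P_is_idempotent}. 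You instead note that $(A_{n,d}+dI)\bigl((n-d)I-A_{n,d}\bigr)=W_{n,d-1}^T\bigl(W_{n,d-1}W_{n,d}\bigr)W_{n,d}^T=O$, so the minimal polynomial divides $(x+d)(x-(n-d))$, recover the multiplicities from $\mathrm{trace}(A_{n,d})=0$, and obtain the eigenspace identifications by passing to orthocomplements of $\ker(W_{n,d-1})$ and $\ker(W_{n,d}^T)$. This is a leaner route: it replaces the inductive Lemmas \ref{lemma_rank_of_Wnk} and \ref{lemma_prod_of_P_and_W} with the single boundary identity, which you prove by the standard direct sign-cancellation argument rather than by induction, and it yields $\mathrm{rank}(W_{n,d})=\binom{n-1}{d}$ as a byproduct rather than needing it as an input. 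The one step you leave compressed, the entrywise matching of the incidence signs with $(-1)^{h(\alpha,\beta)}$, is exactly the computation the paper carries out via Remark \ref{remark_about_non_Zero_entries_in_Wnk}; it does require care (the sign conventions for $W^TW$ versus $WW^T$ differ, which is why one gets $+dI$ in one identity and $-A_{n,d}$ in the other), but your outline of it is sound.
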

We prove Theorem \ref{Main_thm_1} in Section \ref{sec:reg_Johnson}. 
In that same section, we also construct an additional matrix using $W_{n,d}$ which is used to determine both the minimum rank $mr(J(n,d))$ and the positive semidefinite minimum rank $mr_+(J(n,d))$. An effective method to lower bound the minimum rank is the zero forcing number which was first introduced in \cite{aim2008zero}. The \emph{zero forcing process} on a graph $G$ is defined as follows. Initially, there is a subset $S$ of blue vertices, while all other vertices are white. The standard color change rule dictates that at each step, a blue vertex with exactly one white neighbor will force its white neighbor to become blue. The set $S$ is said to be a \emph{zero forcing set} if, by iteratively applying the color change rule, all of $V(G)$ can be coloured blue. The \emph{zero forcing number} of $G$ is the minimum cardinality of a zero forcing set in $G$, denoted by $Z(G)$. The bound provided by $Z(G)$ is that $|V(G)|-Z(G) \leq mr(G)$ (see \cite[Proposition 2.4]{aim2008zero}). Thus, $Z(G)$ serve as combinatorial parameter that help to bound an algebraic parameter $mr(G)$. The authors in \cite{abiad2024diameter} computed the zero forcing number of the Johnson graph $J(n,d)$ (in a much more general setting). The zero forcing number of Johnson graph is $Z(J(n,d)) = \binom{n}{d}-\binom{n-2}{d-1}$ (see \cite[Corollary 9]{abiad2024diameter}). It generalizes the computation of $Z(J(n,2))$ from \cite{fallat2018compressed}. Thus, for the Johnson graphs we have 
\begin{align}\label{eqn_zero_forcing_mr_msr}
    \binom{n-2}{d-1} \leq mr(J(n,d)) \leq mr_+(J(n,d)).
\end{align}
The following result which we will prove in Section \ref{sec:reg_Johnson} constitutes our second main finding about the Johnson graphs.

\begin{theorem}\label{main_thm_2}
    Let $d\geq 2$ and $n\geq 2d$. Then $mr(J(n,d)) = mr_+(J(n,d))=\binom{n-2}{d-1}$.
\end{theorem}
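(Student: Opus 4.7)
The lower bound $\binom{n-2}{d-1}\le mr(J(n,d))\le mr_+(J(n,d))$ is already recorded in~(\ref{eqn_zero_forcing_mr_msr}), coming from the zero forcing number computation of \cite{abiad2024diameter}. Therefore my plan is to prove the matching upper bound $mr_+(J(n,d))\le \binom{n-2}{d-1}$ by exhibiting a positive semidefinite matrix $M\in \mathcal{S}(J(n,d))$ of rank exactly $\binom{n-2}{d-1}$.

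The natural first candidate, coming out of Theorem~\ref{Main_thm_1}, is $(W_{n,d-1})^T W_{n,d-1}=A_{n,d}+dI$. It is positive semidefinite, lies in $\mathcal{S}(J(n,d))$, and has eigenvalues $0$ and $n$ with multiplicities $\binom{n-1}{d}$ and $\binom{n-1}{d-1}$; in particular its rank is $\binom{n-1}{d-1}$, which exceeds our target by exactly $\binom{n-1}{d-1}-\binom{n-2}{d-1}=\binom{n-2}{d-2}$. The plan is to cut down the row space of $W_{n,d-1}$ by this amount while preserving the zero pattern. Concretely, fix two distinguished elements $u,v\in[n]$ and let $U$ be the $\binom{n-2}{d-1}\times\binom{n}{d}$ submatrix of $W_{n,d-1}$ whose rows correspond to the $(d-1)$-subsets of $[n]\setminus\{u,v\}$. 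A short linear-independence argument modeled on the proof of Theorem~\ref{Main_thm_1}---for each row index $\gamma$, the column $\alpha=\gamma\cup\{u\}$ meets only that row---shows that $U$ has full row rank, so $U^T U$ is positive semidefinite of rank exactly $\binom{n-2}{d-1}$.

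The obstruction is that whenever adjacent $d$-subsets $\alpha,\beta$ share $u$ or $v$, no $(d-1)$-subset of $[n]\setminus\{u,v\}$ sits inside $\alpha\cap\beta$, so $(U^T U)_{\alpha,\beta}=0$ at those edges. To repair this I would perturb $U$: augment its rows by signed combinations of rows of $W_{n,d-1}$ indexed by $(d-1)$-subsets containing $u$ or $v$, engineered so that every new row already lies in $\mathrm{row}(U)$ (so the rank does not inflate) while their contribution to the Gram matrix fills in the missing off-diagonal entries. An alternative implementation is an induction on $(n,d)$ using $\binom{n-2}{d-1}=\binom{n-3}{d-2}+\binom{n-3}{d-1}$ together with the block decomposition of $J(n,d)$ into its induced $J(n-1,d-1)$ and $J(n-1,d)$ subgraphs (arising from $d$-subsets that contain, or avoid, a fixed element).

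The main obstacle is reconciling three competing constraints simultaneously: (i) full rank $\binom{n-2}{d-1}$; (ii) the Gram matrix realizing the Johnson graph's exact zero pattern; and (iii) positive semidefiniteness. Any plain submatrix of $W_{n,d-1}$ achieves (i) and (iii) for free but loses the edges threading through the distinguished elements, while naive corrections restore those edges at the cost of pushing the rank back up to $\binom{n-1}{d-1}$. Arranging the correction so that it lands inside $\mathrm{row}(U)$ (or, equivalently in the inductive formulation, arranging the off-diagonal block to be compatible with the block-diagonal pieces) is the technical crux of the argument.
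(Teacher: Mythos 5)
Your overall strategy is the right one --- the lower bound is exactly the zero forcing bound~(\ref{eqn_zero_forcing_mr_msr}), and the theorem reduces to exhibiting a positive semidefinite matrix in $\mathcal{S}(J(n,d))$ of rank $\binom{n-2}{d-1}$ --- but the proposal stops precisely where the proof begins. Your concrete candidate $U^TU$, built from the rows of $W_{n,d-1}$ indexed by $(d-1)$-subsets avoiding two fixed elements, provably fails to lie in $\mathcal{S}(J(n,d))$ (as you note, every edge $\{\alpha,\beta\}$ with $u\in\alpha\cap\beta$ or $v\in\alpha\cap\beta$ gets a zero entry), and the proposed repair --- ``augment by signed combinations of the deleted rows engineered to lie in $\mathrm{row}(U)$'' --- is a description of what a correction would have to achieve, not a construction. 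It is not clear such a correction exists within $\mathrm{row}(U)$ at all: the missing entries are indexed by edges through $u$ and $v$, and any row of $W_{n,d-1}$ supported on those columns is indexed by a $(d-1)$-set containing $u$ or $v$, which need not project into $\mathrm{row}(U)$ in a way that fills exactly the required positions with $\pm1$-compatible values. Since the construction of the witness is the entire content of the upper bound (the lower bound being quoted from \cite{abiad2024diameter}), this is a genuine gap rather than a routine omission.

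For comparison, the paper's witness is $R_{n,d}=M_{n,d}^TM_{n,d}$ with $M_{n,d}=\begin{bmatrix}P_{n-1,d-1} & W_{n-1,d-1}\end{bmatrix}$ (see~(\ref{eqn:Rkn}) and Lemma~\ref{lemma_Rkn}), which uses \emph{one} distinguished element rather than two: the columns are split according to whether the $d$-subset contains $n$, matching the block form~(\ref{eqn_Pkn}) of $P_{n,d}$. The point you were groping for --- cut the rank down by $\binom{n-2}{d-2}$ without destroying the pattern --- is achieved there by replacing the identity block (which a plain row selection of $W_{n,d-1}$ would produce in the $(1,1)$ position, with the wrong zero pattern for the induced $J(n-1,d-1)$) by $P_{n-1,d-1}$; equivalently, by projecting onto $\mathrm{col}(W_{n-1,d-1})$. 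Then $M_{n,d}M_{n,d}^T=nP_{n-1,d-1}$ has rank $\binom{n-2}{d-1}$ by Corollary~\ref{cor_P_is_idempotent}, while the identities $P_{n-1,d-1}W_{n-1,d-1}=(n-1)W_{n-1,d-1}$ and $P_{n-1,d-1}^2=(n-1)P_{n-1,d-1}$ (Lemma~\ref{lemma_prod_of_P_and_W}) show the Gram matrix reproduces the pattern of $P_{n,d}$, hence of $J(n,d)$ by~(\ref{eqn_Pnk_entrywise}). If you want to complete your version, I would recommend abandoning the two-element deletion and verifying this one-element construction instead; your suggested induction on $(n,d)$ via the $J(n-1,d-1)$/$J(n-1,d)$ decomposition is essentially what the block identity~(\ref{eqn_Pkn}) encodes, but it still requires producing the off-diagonal block explicitly.
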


Let us denote the signed Johnson graph corresponding to the signed matrix $A_{n,d}$ by $\dot{J}(n,d)$. The graph with the same vertex set as that of $J(n,d)$, however, the edge set consists of positive (or negative) edges of $\dot{J}(n,d)$ is denoted by $J_{+}(n,d)$ (respectively by $J_{-}(n,d)$). See Figure \ref{fig_J_(6,3)} for the graph $J_{-}(6,3)$. In the following theorem, we provide the formulas for the positive degree $k_+(\alpha)$ and the negative degree $k_{-}(\alpha)$ of any vertex $\alpha$ in the signed Johnson graph $\dot{J}(n,d)$.

\begin{figure}[ht]
    \centering
    \begin{tikzpicture}[scale=0.5]
    \def\N{20} 
    \def\R{10} 

    \def\words{{"{\tiny{}111000}", "{\tiny{}110100}", "{\tiny{}110010}", "{\tiny{}110001}", "{\tiny{}101100}", "{\tiny{}101010}", "{\tiny{}101001}", 
    "{\tiny{}100110}", "{\tiny{}100101}", "{\tiny{}100011}", "{\tiny{}011100}", "{\tiny{}011010}", "{\tiny{}011001}", 
    "{\tiny{}010110}", "{\tiny{}010101}", "{\tiny{}010011}", "{\tiny{}001110}", "{\tiny{}001101}", "{\tiny{}001011}", "{\tiny{}000111}"}}

    \foreach \i in {1,...,\N} {
        \pgfmathsetmacro\angle{360/\N * (\i - 1)}

        \node[circle,draw] (N\i) at (\angle:\R) {\pgfmathparse{\words[\i-1]}\pgfmathresult};
    }

\foreach \i in {5,6,7} {
    \draw (N1) -- (N\i);}

\foreach \i in {11,8,9} {
    \draw (N2) -- (N\i);}

\foreach \i in {12,14,10} {
    \draw (N3) -- (N\i);}

\foreach \i in {13,15,16} {
    \draw (N4) -- (N\i);}

\foreach \i in {8,9} {
    \draw (N5) -- (N\i);}

\foreach \i in {17,10} {
    \draw (N6) -- (N\i);}

\foreach \i in {18,19} {
    \draw (N7) -- (N\i);}

    \draw (N8) .. controls +(300:1cm) and +(-320:1cm) .. (N10);

\foreach \i in {20} {
    \draw (N9) -- (N\i);}

\foreach \i in {14,15} {
    \draw (N11) -- (N\i);}

\foreach \i in {17,16} {
    \draw (N12) -- (N\i);}

\foreach \i in {18,19} {
    \draw (N13) -- (N\i);}

    \draw (N14) .. controls +(-300:1cm) and +(150:1cm) .. (N16);

\foreach \i in {20} {
    \draw (N15) -- (N\i);}

    \draw (N17) .. controls +(500:1cm) and +(-220:1cm) .. (N19);

    \draw (N18) .. controls +(500:1cm) and +(-220:1cm) .. (N20);
\end{tikzpicture}
    \caption{Graph $J_{-}(6,3)$: a $3$-regular graph with $20$ vertices.}
    \label{fig_J_(6,3)}
\end{figure}
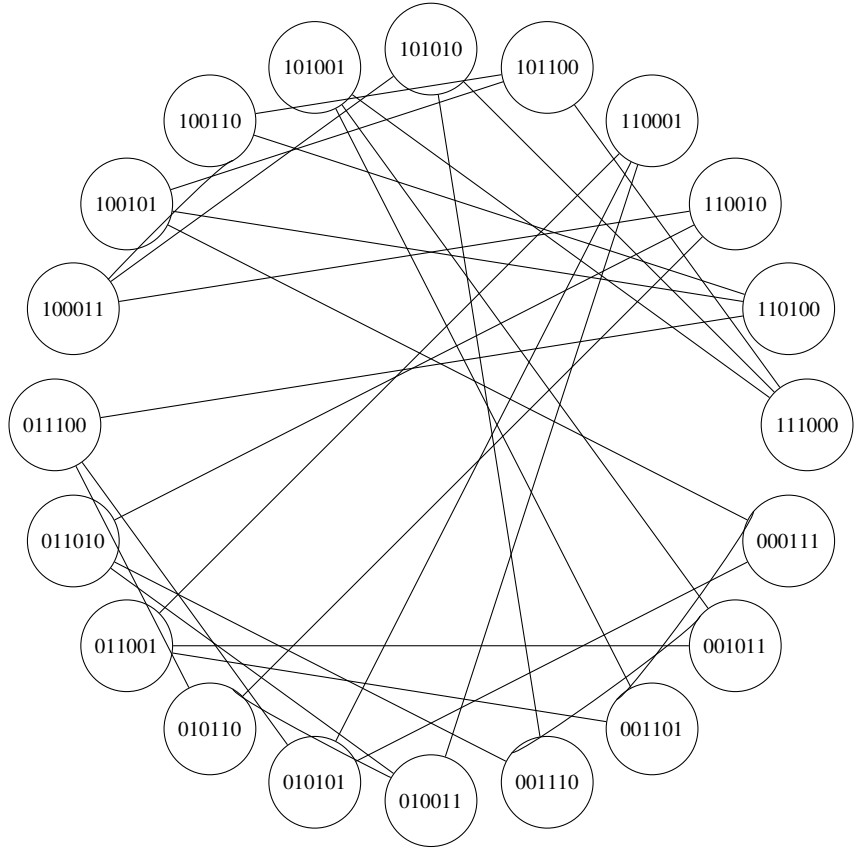

\begin{theorem}\label{prop_pos_and_neg_degree}
     Let $d\geq 1$ and $n\geq d+1$. Suppose $\alpha \in \mathcal{B}_{n,d}$ and $r_{n,d}(\alpha) := \sum_{\ell=0}^{\lfloor(d-1)/2\rfloor}z_{2\ell+1}(\alpha)$. Then 
    \begin{align*}
        k_{+}(\alpha) &= \begin{cases}
            \frac{d+1}{2}(n-d), &\text{if } d \text{ is odd},\\
            \frac{d}{2}(n-d)+r_{n,d}(\alpha) &\text{if } d \text{ is even};
        \end{cases}\\
        k_{-}(\alpha) &= \begin{cases}
            \frac{d-1}{2}(n-d), &\text{if } d \text{ is odd},\\
            \frac{d}{2}(n-d)-r_{n,d}(\alpha), &\text{if } d \text{ is even}.
        \end{cases}
    \end{align*} 
    In particular, the graphs $J_{+}(n,d)$ and $J_{-}(n,d)$ are regular graphs if and only if $d$ is odd. 
\end{theorem}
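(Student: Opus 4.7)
The plan is to parametrize every edge of $J(n,d)$ incident to a fixed vertex $\alpha \in \mathcal{B}_{n,d}$ by a pair $(s,q)$, where $s \in \{1,\ldots,d\}$ selects which of the $d$ ones of $\alpha$ is moved and $q \in Z(\alpha)$ is the target zero position. Writing $X(\alpha) = \{p_1 < p_2 < \cdots < p_d\}$, the first step is to compute the sign $\epsilon_s(t) := (-1)^{h(\alpha,\beta)}$ of the corresponding neighbor $\beta$ in terms of $s$ and the class $t \in \{0,\ldots,d\}$ of $q$ (i.e., $q \in Z_t(\alpha)$). Reading $h(\alpha,\beta)$ directly from the definition, when $t < s$ the zero $q$ lies strictly to the left of $p_s$ and the ones of $\alpha$ strictly between $q$ and $p_s$ are exactly $p_{t+1},\ldots,p_{s-1}$, so $h(\alpha,\beta) = s-1-t$; when $t \ge s$ symmetrically $h(\alpha,\beta) = t-s$.

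Since $J(n,d)$ is $d(n-d)$-regular, $k_+(\alpha) + k_-(\alpha) = d(n-d)$, so it only remains to pin down $k_+(\alpha) - k_-(\alpha)$. Grouping the $d(n-d)$ edges at $\alpha$ by the pair $(s,t)$ and using that $Z_t(\alpha)$ supplies $z_t(\alpha)$ choices of $q$, one obtains
\[
k_+(\alpha) - k_-(\alpha) \;=\; \sum_{t=0}^d z_t(\alpha)\, E_d(t), \qquad E_d(t) := \sum_{s=1}^d \epsilon_s(t).
\]
Splitting the inner sum at $s = t$ produces two alternating sums $\sum_{k=0}^{t-1}(-1)^k$ and $\sum_{k=0}^{d-t-1}(-1)^k$, each evaluating to $1$ or $0$ according to the parity of $t$ and $d-t$. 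Hence $E_d(t)$ is the number of odd integers among $\{t,\,d-t\}$: it is the constant $1$ when $d$ is odd (because then $t$ and $d-t$ have opposite parities), while it equals $2$ for odd $t$ and $0$ for even $t$ when $d$ is even.

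Substituting these two evaluations into the double sum gives $k_+(\alpha) - k_-(\alpha) = \sum_t z_t(\alpha) = n-d$ when $d$ is odd and $k_+(\alpha) - k_-(\alpha) = 2\, r_{n,d}(\alpha)$ when $d$ is even; combining each with $k_+(\alpha) + k_-(\alpha) = d(n-d)$ yields the claimed formulas. The regularity dichotomy is then immediate: for $d$ odd both closed forms are independent of $\alpha$, so $J_+(n,d)$ and $J_-(n,d)$ are regular of the stated degrees, whereas for $d$ even and $d \ge 2$ the two vertices $\alpha = 1^d 0^{n-d}$ and $\alpha' = 10\,1^{d-1} 0^{n-d-1}$ satisfy $r_{n,d}(\alpha) = 0$ and $r_{n,d}(\alpha') = 1$, so $k_+$ already takes two distinct values. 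The only real bookkeeping will be tracking $\epsilon_s(t)$ correctly at the boundary indices $t \in \{0,d\}$ and at $t = s$; past that the argument reduces to elementary alternating-sum manipulations.
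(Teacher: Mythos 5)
Your argument is correct, and it takes a genuinely different route from the paper. You compute $k_+(\alpha)-k_-(\alpha)$ in closed form by parametrizing each neighbor of $\alpha$ by the pair $(s,t)$ (which $1$ is moved, which block $Z_t(\alpha)$ receives it), reading off $h(\alpha,\beta)=s-1-t$ or $t-s$ directly from the definition, and evaluating the resulting alternating sums $E_d(t)=\sum_{s}\epsilon_s(t)$; this cleanly explains why the answer is the constant $n-d$ for odd $d$ and $2r_{n,d}(\alpha)$ for even $d$ (I checked your sign formula and the evaluation of $E_d(t)$ against small cases such as $J(3,2)$, and they are right, including the boundary cases $t\in\{0,d\}$ and $t=s$ that you flag). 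The paper instead proceeds by double induction on $d$ and $n$, counting the $+1$'s in the row of $A_{n,d}$ indexed by $\alpha$ via the block recursion $A_{n,d}=\left[\begin{smallmatrix} A_{n-1,d-1} & (-1)^{d-1}W_{n-1,d-1}\\ (-1)^{d-1}W_{n-1,d-1}^T & A_{n-1,d}\end{smallmatrix}\right]$ together with Remark \ref{remark_about_non_Zero_entries_in_Wnk} on the number of $\pm 1$ entries in rows of $W_{n,d}$, splitting into cases according to the parity of $d$ and the last bit of $\alpha$. Your approach is more self-contained and arguably more illuminating (it isolates the combinatorial source of $r_{n,d}(\alpha)$), while the paper's fits into the recursive matrix framework it reuses throughout Section \ref{sec:reg_Johnson}; your non-regularity witness for even $d$ (comparing $1^d0^{n-d}$ with $101^{d-1}0^{n-d-1}$) is also valid for all $n\geq d+1$.
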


In the next section, we prove Theorem \ref{prop_pos_and_neg_degree} along with the two previously mentioned theorems. However, in this section, we identify another graph within the Johnson scheme that has exactly two distinct eigenvalues. Specifically, when $n = 3d - 2$, the complement of the Johnson graph has two distinct eigenvalues. To establish the following theorem, we rely on Theorem \ref{thm:sym_scheme_idem}, and utilize the expression for the first eigenmatrix of the Johnson scheme (see Delsarte \cite[p. 48]{delsarte1973algebraic}). 
 For all $0\leq i \leq d$ and $0\leq j\leq d$, the entry $P_j(i)$ of the first eigenmatrix is given by
\begin{equation}\label{eq:john_eig}
    P_j(i) = \sum_{h=0}^{j}(-1)^{j-h}\binom{d-i}{h}\binom{d-h}{j-h}\binom{n-d-i+h}{h}.
\end{equation}
\begin{theorem}\label{thm:complement_J(3d-2,d)}
    Let $d \geq 2$, and let $G$ be the complement graph of the Johnson graph $J(3d-2,d)$. Then $q(G) = 2$. 
\end{theorem}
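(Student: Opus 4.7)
The plan is to invoke Theorem~\ref{thm:sym_scheme_idem} for the Johnson scheme $\mathcal{A}=\{A_0,A_1,\ldots,A_d\}$ on $\binom{3d-2}{d}$ vertices. Since the complement $G$ of $J(3d-2,d)$ has adjacency matrix $A_2+A_3+\cdots+A_d$, I would take $J=\{2,3,\ldots,d\}$ and look for a subset $I\subset\{0,1,\ldots,d\}$ for which $\sum_{i\in I}\frac{m_i P_j(i)}{P_j(0)}$ is nonzero if and only if $j\neq 1$. The most economical candidate is the singleton $I=\{d-1\}$, whereupon the requirement collapses to the two claims $P_1(d-1)=0$ and $P_j(d-1)\neq 0$ for every $j\in\{0,2,3,\ldots,d\}$.

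For the first claim, formula~\eqref{eq:john_eig} with $n=3d-2$ evaluates to
\[
P_1(i)=(d-i)(2d-1-i)-d=i^2-(3d-1)i+2d(d-1)=(i-(d-1))(i-2d),
\]
so $P_1(d-1)=0$ is immediate. For the second claim, substituting $i=d-1$ into~\eqref{eq:john_eig} forces $\binom{d-i}{h}=\binom{1}{h}$, which vanishes outside $h\in\{0,1\}$; the identity $d\binom{d-1}{j-1}=j\binom{d}{j}$ then simplifies the two surviving terms to
\[
P_j(d-1)=(-1)^j(1-j)\binom{d}{j},
\]
which is zero precisely when $j=1$ and nonzero for every $j\in\{0,2,3,\ldots,d\}$.

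With both claims in hand, Theorem~\ref{thm:sym_scheme_idem} supplies the primitive idempotent $E_{d-1}\in\mathcal{S}(G)$; this matrix has only the eigenvalues $0$ and $1$, so $q(G)=2$. I do not anticipate any substantive obstacle here — the entire argument reduces to one eigenmatrix computation, and the only step meriting mild care is checking that the closed form for $P_j(d-1)$ behaves correctly at the boundary cases $j=0$ and $j=d$, which it does.
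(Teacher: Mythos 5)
Your proposal is correct and follows essentially the same route as the paper: both apply Theorem~\ref{thm:sym_scheme_idem} with $I=\{d-1\}$ and $J=\{2,\ldots,d\}$ after computing $P_j(d-1)=(-1)^{j}(1-j)\binom{d}{j}$ from the Delsarte formula (the paper writes this as $(-1)^{j-1}(j-1)\binom{d}{j}$, which is the same expression). The only cosmetic difference is your extra direct verification of $P_1(i)$ as a quadratic in $i$, which the paper omits since it is subsumed by the general formula.
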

\begin{proof}
From Equation \ref{eq:john_eig}, we have $P_j(d-1) = (-1)^{j-1}(j-1)\binom{d}{j}$ for $0\leq j \leq d$. Thus, $P_{j}(d-1) \neq 0$ if and only if $j\neq 1$. Since the multiplicity $m_{d-1}$ and the valencies $P_j(0)$ are positive integers, Theorem \ref{thm:sym_scheme_idem} applies with $I = \{d-1\}$ and $J = \{2,3,\ldots,d\}$ to yield the desired result. 
\end{proof}

\subsection{Proofs of the main results}\label{sec:reg_Johnson} 
At the end of this section, we establish Theorem \ref{Main_thm_1}, Theorem \ref{main_thm_2}, and Theorem \ref{prop_pos_and_neg_degree}. However, we first need to introduce several intermediate lemmas. We begin with the following remark, which compiles some key observations about the matrix $W_{n,d}$ for later reference.
\begin{rem}\label{remark_about_non_Zero_entries_in_Wnk}
Every column of $W_{n,d}$ has exactly $d+1$ number of non-zero entries, with $1$ and $-1$ alternating from top to bottom. Therefore, there are $\lceil (d+1)/2\rceil$ number of $1$s, and $\lfloor (d+1)/2 \rfloor$ number of $-1$s in every column of $W_{n,d}$. Note that equation (\ref{eqn_W_entrywise}) provides the entries of $W_{n,d}$ with respect to a fixed column corresponding to $\beta \in \mathcal{B}_{n,d+1}$, and involves modifying $\beta$ to obtain an $\alpha \in \mathcal{B}_{n,d}$. However, it is also necessary to understand the entries of $W_{n,d}$ relative to a fixed row corresponding to $\alpha \in \mathcal{B}_{n,d}$, and the modification of $\alpha$ to obtain an $\beta \in \mathcal{B}_{n,d+1}$. Specifically, if $\beta$ is obtained from $\alpha$ by switching a $0$ corresponding to any of the indices in $Z_i(\alpha)$ to a $1$, then $W_{n,d}(\alpha,\beta) = (-1)^{i-1}$. If more than one $0$ of $\alpha$ are switched to obtain $\beta$, then $W_{n,d}(\alpha,\beta) = 0$. Consequently, in the row of $W_{n,d}$ corresponding to $\alpha$, the number of $1$s is equal to $s_{n,d}(\alpha) := \sum_{\ell=0}^{\lfloor d/2 \rfloor}z_{2\ell}(\alpha)$, while the number of $-1$s is equal to $r_{n,d}(\alpha) = \sum_{\ell=0}^{\lfloor (d-1)/2 \rfloor}z_{2\ell+1}(\alpha)$. Furthermore, each row contains exactly $s_{n,d}(\alpha)+r_{n,d}(\alpha) = n-d$ number of $\pm 1$ entries, and the other entries are zeros. 
\end{rem}

Let $d\geq 1$ and $n\geq d+1$. The following two matrices, both of which have their rows and columns indexed by $\mathcal{B}_{n,d}$, are of significant importance to our results:
\begin{align}
    Q_{n,d} &:= W^T_{n,d-1}W_{n,d-1},\label{eqn_Q_defining}\\
    P_{n,d} &:= W_{n,d} W^T_{n,d}\label{eqn_P_defining}.
\end{align}

Note that $Q_{n,d}$ and $P_{n,d}$ are down-Laplacian and up-Laplacian matrices, respectively, for the power set simplicial complex (see Section \ref{sec:simp_comp}). However, the subsequent discussion will not depend on any prior knowledge of simplicial complexes. We begin by describing the entries of $Q_{n,d}$. The diagonal entries of $Q_{n,d}$ are constant equal to $d$. Since as noted in Remark \ref{remark_about_non_Zero_entries_in_Wnk} each column of $W_{n,d-1}$ contains exactly  $d$ non-zero $\pm 1$ entries, with the remaining entries being zeros. For the off-diagonal entries, consider $\alpha,\beta \in \mathcal{B}_{n,d}$ where $\alpha \neq \beta$. Suppose $|X(\alpha) \cap X(\beta)| = s$. If $0\leq s < d-1$, then no element $\gamma \in \mathcal{B}_{n,d-1}$ exists such that $X(\gamma) \subseteq X(\alpha)$ and $X(\gamma) \subseteq X(\beta)$. This implies that in this case the entry $Q_{n,d}(\alpha,\beta)$ is zero. However, if $s = d-1$, there is exactly one element $\gamma \in \mathcal{B}_{n,d-1}$ such that $X(\gamma) \subseteq X(\alpha)$ and $X(\gamma) \subseteq X(\beta)$. In that case, the Hamming distance $d_H(\alpha,\beta) = 2$, and the entry $Q_{n,k}(\alpha,\beta) = W_{n,d-1}(\gamma,\alpha)W_{n,d-1}(\gamma,\beta)$, which is $\pm 1$. Thus, we need to understand when does the sign of $W_{n,d-1}(\gamma,\alpha)$ and $W_{n,d-1}(\gamma,\beta)$ differ if we modify $\gamma$ to obtain $\alpha$ and $\beta$.  Based on the reasoning provided in Remark \ref{remark_about_non_Zero_entries_in_Wnk}, if $h(\alpha,\beta)$ is odd, the signs of $W_{n,d-1}(\gamma,\alpha)$ and $W_{n,d-1}(\gamma,\beta)$ differ. Otherwise, if $h(\alpha,\beta)$ is even, the signs are the same. Therefore, we have 
\begin{align}\label{eqn_Qnk_entrywise}
    Q_{n,d}(\alpha,\beta) = \begin{cases}
    d, &\text{if } \alpha = \beta,\\
    (-1)^{h(\alpha,\beta)}, &\text{if } d_H(\alpha,\beta)= 2,\\
    0, &\text{otherwise}.
    \end{cases}
\end{align}

In the similar manner, we describe the entries of $P_{n,d}$. The diagonal entries of $P_{n,d}$ are constant equal to $n-d$. Since as noted in Remark \ref{remark_about_non_Zero_entries_in_Wnk} each row of $W_{n,d}$ contains exactly  $n-d$ non-zero $\pm 1$ entries, with the remaining entries being zeros. For the off-diagonal entries, consider $\alpha,\beta \in \mathcal{B}_{n,d}$ where $\alpha \neq \beta$. Suppose $|X(\alpha) \cap X(\beta)| = s$. If $0\leq s < d-1$, then no element $\gamma \in \mathcal{B}_{n,d+1}$ exists such that $X(\alpha) \subseteq X(\gamma)$ and $X(\beta) \subseteq X(\gamma)$. This implies that in this case the entry $P_{n,d}(\alpha,\beta)$ is zero. However, if $s = d-1$, there is exactly one element $\gamma \in \mathcal{B}_{n,d+1}$ such that $X(\alpha) \subseteq X(\gamma)$ and $X(\beta) \subseteq X(\gamma)$. In that case, the Hamming distance $d_H(\alpha,\beta) = 2$, and the entry $P_{n,d}(\alpha,\beta) = W_{n,d}(\alpha,\gamma)W_{n,d}(\beta,\gamma)$, which is $\pm 1$. So here also we need to understand when does the sign of $W_{n,d}(\alpha,\gamma)$ and $W_{n,d}(\beta,\gamma)$ differ if we modify $\alpha$ and $\beta$ to obtain $\gamma$. Based on the similar reasoning provided in Remark \ref{remark_about_non_Zero_entries_in_Wnk}, if $h(\alpha,\beta)$ is even, the signs of $W_{n,d}(\alpha,\gamma)$ and $W_{n,d}(\beta,\gamma)$ differ. Otherwise, if $h(\alpha,\beta)$ is odd, the signs are the same. Therefore, we have 
\begin{align}\label{eqn_Pnk_entrywise}
    P_{n,d}(\alpha,\beta) = \begin{cases}
    n-d, &\text{if } \alpha = \beta,\\
    (-1)^{h(\alpha,\beta)+1}, &\text{if } d_H(\alpha,\beta)= 2,\\
    0, &\text{otherwise}.
    \end{cases}
\end{align}
We immediately obtain from the entry-wise descriptions (\ref{eqn_Qnk_entrywise}) and (\ref{eqn_Pnk_entrywise}) of matrices $Q_{n,d}$ and $P_{n,d}$ that 
\begin{align}\label{eqn_sum_of_P_and_Q}
    Q_{n,d} + P_{n,d} = nI_{\binom{n}{d}}.
\end{align}
Moreover, for the signed adjacency matrix $A_{n,d}$, equation (\ref{eqn_A_entrywise}), we have 
\begin{align}\label{eqn_A_in_terms_of_P}
    A_{n,d} = Q_{n,d}-dI = (n-d)I - P_{n,d}.
\end{align}

The following discussion establish a key feature of the aforementioned families of matrices. Which is that each of them can be partitioned as a block matrix with a recursive structure. This property enables us to apply the method of induction to prove several results. Note that
\begin{align}
   W_{n,0} &= J_{1,n}, \label{eqn_W0n}\\
 W_{d+1,d} &= \begin{bmatrix}
    1 & -1 & 1 & \cdots & (-1)^d
\end{bmatrix}^T.\label{eqn_Wkk+1}
\end{align}
Suppose $d\geq 1$ and $n\geq d+2$. Then we have a block partition of the matrix $W_{n,d}$ by recognizing that all the binary words in $\mathcal{B}_{n,d}$ ending in $1$ correspond to $\mathcal{B}_{n-1,d-1}$, while those ending in $0$ correspond to $\mathcal{B}_{n-1,d}$. Moreover, according to the lexicographic ordering the elements of $\mathcal{B}_{n-1,d-1}$ are before that of $\mathcal{B}_{n-1,d}$. This gives us the following block partition
\begin{align}
    W_{n,d} = \begin{bmatrix}
        W_{n-1,d-1} & O_{\binom{n-1}{d-1},\binom{n-1}{d+1}}\\
        (-1)^dI_{\binom{n-1}{d}} & W_{n-1,d}\\
    \end{bmatrix}. \label{eqn_Wkn}
\end{align} 
Next the equations (\ref{eqn_Q_defining}), (\ref{eqn_P_defining}), (\ref{eqn_sum_of_P_and_Q}), (\ref{eqn_A_in_terms_of_P}), (\ref{eqn_W0n}), and (\ref{eqn_Wkk+1}) implies that
\begin{align}
    P_{n,1} &= nI_{n}-J_{n,n},\label{eqn_P1n}\\
    P_{d+1,d} &= W_{d+1,d}W^T_{d+1,d} = [(-1)^{i+j}]_{0\leq i,j \leq d},\label{eqn_Pkk+1}\\ 
     A_{n,1} &= -I_{n} + J_{n,n},\label{eqn_An1}\\
     A_{d+1,d} &= I_{d+1} - [(-1)^{i+j}]_{0\leq i,j \leq d}.\label{eqn_Akk+1}
\end{align}
Suppose $d\geq 2$ and $n\geq d+2$. Then the following block partition of the matrix $P_{n,d}$ is obtained by using equation (\ref{eqn_Wkn}) as
\begin{align*}
    P_{n,d} &= W_{n,d} W^T_{n,d}\\
    &= \begin{bmatrix}
        W_{n-1,d-1} & O_{\binom{n-1}{d-1},\binom{n-1}{d+1}}\\
        (-1)^dI_{\binom{n-1}{d}} & W_{n-1,d}\\
    \end{bmatrix}\begin{bmatrix}
        W^T_{n-1,d-1} & (-1)^dI_{\binom{n-1}{d}}\\
        O_{\binom{n-1}{d+1},\binom{n-1}{d-1}}
         & W^T_{n-1,d}\\
    \end{bmatrix}\\
    &= \begin{bmatrix}
        W_{n-1,d-1}W^T_{n-1,d-1} & (-1)^dW_{n-1,d-1}\\
        (-1)^dW^T_{n-1,d-1} & I_{\binom{n-1}{d}}+W_{n-1,d}W^T_{n-1,d}\\
    \end{bmatrix}\\
    &= \begin{bmatrix}
        P_{n-1,d-1} & (-1)^dW_{n-1,d-1}\\
        (-1)^dW^T_{n-1,d-1} & I_{\binom{n-1}{d}}+P_{n-1,d}
    \end{bmatrix}. \numberthis \label{eqn_Pkn}
\end{align*}
By using equations (\ref{eqn_A_in_terms_of_P}) and (\ref{eqn_Pkn}) we obtain the following block partition of the matrix $A_{n,d}$ as
\begin{align}\label{eqn_Akn}
    A_{n,d} = \begin{bmatrix}
        A_{n-1,d-1} & (-1)^{d-1}W_{n-1,d-1}\\
        (-1)^{d-1}W_{n-1,d-1}^T & A_{n-1,d}
    \end{bmatrix}.
\end{align}

\begin{lemma}\label{lemma_prod_of_W_with_W}
Let $d\geq 1$ and $n\geq d+1$. Then 
\begin{align*}
W_{n,d-1}W_{n,d} = O_{\binom{n}{d-1},\binom{n}{d+1}}
\end{align*}
\end{lemma}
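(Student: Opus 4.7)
The plan is to prove this identity directly from the entry-wise definition of $W_{n,d}$, since it is essentially the classical $\partial^2 = 0$ relation for simplicial boundary operators. Concretely, I would fix $\alpha \in \mathcal{B}_{n,d-1}$ and $\gamma \in \mathcal{B}_{n,d+1}$ and expand
\begin{equation*}
(W_{n,d-1}W_{n,d})(\alpha,\gamma) \;=\; \sum_{\beta \in \mathcal{B}_{n,d}} W_{n,d-1}(\alpha,\beta)\,W_{n,d}(\beta,\gamma).
\end{equation*}
The first observation is that a term in this sum is nonzero only when $X(\alpha) \subset X(\beta) \subset X(\gamma)$ with each containment increasing the cardinality by exactly one; in particular, the sum vanishes trivially unless $X(\alpha) \subset X(\gamma)$ and $|X(\gamma) \setminus X(\alpha)| = 2$.

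Next, in the remaining case, I write $X(\gamma) = \{x_1 < x_2 < \cdots < x_{d+1}\}$ and denote the two extra elements by $x_i < x_j$, where $1 \leq i < j \leq d+1$. Then precisely two intermediate words $\beta_1,\beta_2 \in \mathcal{B}_{n,d}$ contribute: $X(\beta_1) = X(\gamma)\setminus\{x_j\}$ and $X(\beta_2) = X(\gamma)\setminus\{x_i\}$. A careful bookkeeping of positions inside $\beta_1$ and $\beta_2$ (using the fact that the elements of $X(\alpha)$ sitting below $x_i$ or $x_j$ are inherited from $X(\gamma)$) yields
\begin{equation*}
W_{n,d-1}(\alpha,\beta_1) = (-1)^{i-1}, \quad W_{n,d}(\beta_1,\gamma) = (-1)^{j-1},
\end{equation*}
\begin{equation*}
W_{n,d-1}(\alpha,\beta_2) = (-1)^{j-2}, \quad W_{n,d}(\beta_2,\gamma) = (-1)^{i-1}.
\end{equation*}
Adding the two contributions gives $(-1)^{i+j-2} + (-1)^{i+j-3} = 0$, so the $(\alpha,\gamma)$-entry of the product is zero.

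The only subtle step is computing the sign $W_{n,d-1}(\alpha,\beta_2) = (-1)^{j-2}$: in $X(\beta_2)$ the element $x_i$ has been deleted from $X(\gamma)$, so $x_j$ now sits in position $j-1$ rather than $j$. I expect this position-shift to be the one place where one could make a sign error, so I would double-check it by running the argument on a small case (say $n = 4$, $d = 2$, with $X(\gamma) = \{1,2,3\}$ and $X(\alpha) = \{2\}$, for which $i = 1$, $j = 3$) to confirm the cancellation holds as stated. Once the signs are verified, no further induction or appeal to the block decomposition (\ref{eqn_Wkn}) is needed, although that block decomposition provides an alternative inductive proof where the cross term collapses because $(-1)^{d-1} + (-1)^d = 0$.
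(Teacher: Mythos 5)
Your proof is correct, and the sign bookkeeping checks out: with $X(\gamma)\setminus X(\alpha)=\{x_i,x_j\}$, $i<j$, the two contributions are $(-1)^{i-1}(-1)^{j-1}$ from $\beta_1$ and $(-1)^{j-2}(-1)^{i-1}$ from $\beta_2$ (the shift from $j$ to $j-1$ occurring because deleting $x_i$ moves $x_j$ down one position), and these cancel. This is the classical $\partial^2=0$ computation for the simplicial boundary operator, carried out directly on the entries of $W_{n,d}$. The paper takes the other route you mention in your last sentence: a double induction on $d$ and $n$ using the block decomposition (\ref{eqn_Wkn}), with base cases $d=1$ (via $W_{n,0}=J_{1,n}$) and $n=d+1$ (via a secondary induction using (\ref{eqn_Wkk+1})), where the diagonal blocks vanish by the inductive hypothesis and the cross term collapses precisely because $(-1)^{d-1}W_{n-1,d-1}+(-1)^{d}W_{n-1,d-1}=0$. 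Your argument is more self-contained and conceptually transparent, since it identifies the statement as an instance of a standard chain-complex identity and requires no induction; the paper's inductive proof has the advantage of reusing the same recursive block machinery that drives its subsequent lemmas on $\mathrm{rank}(W_{n,d})$, $P_{n,d}W_{n,d}$, and $R_{n,d}$, so it keeps the whole section in one uniform style. Either proof is acceptable.
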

\begin{proof}
We apply the method of double induction on $d$ and $n$. Let $d=1$ and $n\geq 2$. In this case, since every column of $W_{n,1}$ has exactly one $+1$ and one $-1$ so by using equation (\ref{eqn_W0n}) we have
\begin{align*}
W_{n,0}W_{n,1} &= J_{1,n}W_{n,1} = O_{\binom{n}{0},\binom{n}{2}}.
\end{align*}
Let $n = d+1$ and $d\geq 1$. In this case, we use another induction on $d$. The base case of $d=1$ is already proved. Let us assume that $W_{d,d-2}W_{d,d-1} = O_{\binom{d}{d-2},\binom{d}{d}}$. Thus, by using equations (\ref{eqn_Wkk+1}) and (\ref{eqn_Wkn}) we obtain
\begin{align*}
W_{d+1,d-1}W_{d+1,d} = \begin{bmatrix}
W_{d,d-2} & O_{\binom{d}{d-2},\binom{d}{d}}\\
(-1)^{d-1}I_{\binom{d}{d-1}} & W_{d,d-1}
\end{bmatrix} \begin{bmatrix}
W_{d,d-1}\\
(-1)^d
\end{bmatrix} = \begin{bmatrix}
W_{d,d-2}W_{d,d-1}\\
(-1)^{d-1}W_{d,d-1}+(-1)^dW_{d,d-1}
\end{bmatrix} = O_{\binom{d+1}{d-1},\binom{d+1}{d+1}}.
\end{align*}
Now, let us assume that $W_{n_0,d_0-1}W_{n_0,d_0} = O_{\binom{n_0}{d_0-1},\binom{n_0}{d_0+1}}$ for all $1\leq d_0 <d$ and $d_0+1 \leq n_0 <n$. Then by using the equation (\ref{eqn_Wkn}) we obtain
\begin{align*}
W_{n,d-1}W_{n,d} &= \begin{bmatrix}
        W_{n-1,d-2} & O_{\binom{n-1}{d-2},\binom{n-1}{d}}\\
        (-1)^{d-1}I_{\binom{n-1}{d-1}} & W_{n-1,d-1} 
        \end{bmatrix}
        \begin{bmatrix}
        W_{n-1,d-1} & O_{\binom{n-1}{d-1},\binom{n-1}{d+1}}\\
        (-1)^dI_{\binom{n-1}{d}} & W_{n-1,d}
        \end{bmatrix}\\
        &= \begin{bmatrix}
        W_{n-1,d-2}W_{n-1,d-1} & O_{\binom{n-1}{d-2},\binom{n-1	}{d+1}}\\
         (-1)^{d-1} W_{n-1,d-1} + (-1)^d W_{n-1,d-1}  & W_{n-1,d-1} W_{n-1,d}
        \end{bmatrix}\\
         &= O_{\binom{n}{d-1},\binom{n}{d+1}}. \qedhere
\end{align*}
\end{proof}

\begin{lemma}\label{lemma_rank_of_Wnk}
    Let $n\geq d+1$. Then rank$(W_{n,d}) = \binom{n-1}{d}$.
\end{lemma}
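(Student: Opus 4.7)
The plan is to prove $\operatorname{rank}(W_{n,d}) = \binom{n-1}{d}$ by induction on $d$, establishing matching upper and lower bounds.

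For the base case $d=0$, equation (\ref{eqn_W0n}) gives $W_{n,0} = J_{1,n}$, which has rank $1 = \binom{n-1}{0}$, so the claim holds.

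For the inductive step, assume $\operatorname{rank}(W_{n,d-1}) = \binom{n-1}{d-1}$ for every $n \geq d$. The upper bound $\operatorname{rank}(W_{n,d}) \leq \binom{n-1}{d}$ will follow directly from Lemma \ref{lemma_prod_of_W_with_W}: since $W_{n,d-1} W_{n,d} = O$, the column space of $W_{n,d}$ sits inside $\ker(W_{n,d-1})$, giving
\[
   \operatorname{rank}(W_{n,d}) \leq \operatorname{null}(W_{n,d-1}) = \binom{n}{d} - \binom{n-1}{d-1} = \binom{n-1}{d},
\]
where the last equality is Pascal's identity.

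For the lower bound $\operatorname{rank}(W_{n,d}) \geq \binom{n-1}{d}$, I would exploit the block partition in equation (\ref{eqn_Wkn}), which is available whenever $n \geq d+2$. That decomposition exhibits $(-1)^d I_{\binom{n-1}{d}}$ as the bottom-left block; picking the corresponding $\binom{n-1}{d}$ rows (from the second row-block) and $\binom{n-1}{d}$ columns (from the first column-block) yields a signed identity submatrix, and hence a nonvanishing minor of size $\binom{n-1}{d}$. The remaining edge case $n = d+1$ is handled by equation (\ref{eqn_Wkk+1}), where $W_{d+1,d}$ is a nonzero column vector of rank $1 = \binom{d}{d}$. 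Combining the two bounds completes the induction.

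I do not expect any substantive obstacle here: the heavy lifting has already been done by Lemma \ref{lemma_prod_of_W_with_W} and the recursive block structure. The only care needed is to verify that the identity submatrix produced by (\ref{eqn_Wkn}) genuinely has the claimed size and to make sure the base case $n = d+1$ is treated separately so the block decomposition is not invoked outside its range of validity.
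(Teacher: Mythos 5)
Your proposal is correct and follows essentially the same route as the paper: induction on $d$ with the base cases $d=0$ (equation (\ref{eqn_W0n})) and $n=d+1$ (equation (\ref{eqn_Wkk+1})), the upper bound obtained from Lemma \ref{lemma_prod_of_W_with_W} via the rank–nullity count $\operatorname{rank}(W_{n,d}) \leq \binom{n}{d} - \binom{n-1}{d-1} = \binom{n-1}{d}$, and the lower bound read off from the $(-1)^d I_{\binom{n-1}{d}}$ block in the partition (\ref{eqn_Wkn}). The only cosmetic difference is that the paper phrases the orthogonality step with transposes (row space of $W_{n,d-1}$ inside the null space of $W_{n,d}^T$) and frames the whole argument as a double induction, but the substance is identical.
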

\begin{proof}
    Let us apply the method of double induction on $d$ and $n$. If $d=0$ and $n\geq 1$, then the statement holds from equation (\ref{eqn_W0n}). If $n=d+1$ and $d\geq 1$, then it holds from equation (\ref{eqn_Wkk+1}). Now, let us assume that rank$(W_{n,d-1}) = \binom{n-1}{d-1}$. By Lemma \ref{lemma_prod_of_W_with_W} we have $W_{n,d}^TW_{n,d-1}^T = O_{\binom{n}{d+1},\binom{n}{d-1}}$. That is the row space of $W_{n,d-1}$ is contained in the null space of $W_{n,d}^T$. Therefore, $\binom{n-1}{d-1}\leq \text{nullity}(W_{n,d}^T) = \binom{n}{d}-\text{rank}(W_{n,d})$. Thus, rank$(W_{n,d})\leq \binom{n-1}{d}$. On the other hand, second row of the block partition of $W_{n,d}$, equation (\ref{eqn_Wkn}), implies that rank$(W_{n,d})\geq \binom{n-1}{d}$.
\end{proof}

\begin{lemma}\label{lemma_prod_of_P_and_W}
 Let $d\geq 1$ and $n\geq d+1$. Then $P_{n,d}W_{n,d} = n W_{n,d}$ and $P_{n,d}W_{n,d-1}^T = O_{\binom{n}{d},\binom{n}{d}}$.
\end{lemma}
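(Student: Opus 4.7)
The plan is to obtain both identities as quick consequences of Lemma \ref{lemma_prod_of_W_with_W} combined with the relation (\ref{eqn_sum_of_P_and_Q}). No entry-wise combinatorial computation should be needed: all of the work has already been absorbed into the vanishing statement $W_{n,d-1}W_{n,d}=O$, which plays the role of the chain-complex identity $\partial^2=0$.

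For the second identity $P_{n,d}W_{n,d-1}^T=O$, I would simply substitute the definition $P_{n,d}=W_{n,d}W_{n,d}^T$ from (\ref{eqn_P_defining}) and then group as $W_{n,d}\bigl(W_{n,d}^T W_{n,d-1}^T\bigr) = W_{n,d}\bigl(W_{n,d-1}W_{n,d}\bigr)^T$. By Lemma \ref{lemma_prod_of_W_with_W}, the inner product is the zero matrix, hence so is the whole expression. Note that this also gives the correct dimensions $\binom{n}{d}\times\binom{n}{d-1}$ for the zero matrix, so the statement in the lemma should read $O_{\binom{n}{d},\binom{n}{d-1}}$.

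For the first identity $P_{n,d}W_{n,d}=nW_{n,d}$, I would use (\ref{eqn_sum_of_P_and_Q}) to replace $P_{n,d}$ by $nI-Q_{n,d}$, so that $P_{n,d}W_{n,d} = nW_{n,d}-Q_{n,d}W_{n,d}$. Then I would substitute the definition $Q_{n,d}=W_{n,d-1}^T W_{n,d-1}$ from (\ref{eqn_Q_defining}) and group the product as $W_{n,d-1}^T\bigl(W_{n,d-1}W_{n,d}\bigr)$; again Lemma \ref{lemma_prod_of_W_with_W} makes the inner factor vanish, so $Q_{n,d}W_{n,d}=O$ and the identity follows.

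There is no genuine obstacle here; the only subtlety is being careful about the matrix sizes and the direction of transposition when moving from $W_{n,d-1}W_{n,d}=O$ to its transpose. The reason this lemma is cheap is precisely that the combinatorial bookkeeping on the signs $(-1)^{i-1}$ has already been discharged in proving Lemma \ref{lemma_prod_of_W_with_W}, which is the spectral/matrix shadow of the fact that successive boundary maps in the simplicial chain complex of $\mathcal{P}(V)$ compose to zero.
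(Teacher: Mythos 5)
Your proof of the second identity is exactly the paper's: write $P_{n,d}W_{n,d-1}^T = W_{n,d}\bigl(W_{n,d-1}W_{n,d}\bigr)^T$ and invoke Lemma \ref{lemma_prod_of_W_with_W}. You are also right that the zero block has size $\binom{n}{d}\times\binom{n}{d-1}$, so the subscript $O_{\binom{n}{d},\binom{n}{d}}$ in the statement (and in the paper's proof) is a typo.

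For the first identity your route is genuinely different from, and shorter than, the paper's. The paper proves $P_{n,d}W_{n,d}=nW_{n,d}$ by double induction on $d$ and $n$, using the base cases (\ref{eqn_P1n}), (\ref{eqn_Wkk+1}), (\ref{eqn_Pkk+1}) and the recursive block partitions (\ref{eqn_Wkn}), (\ref{eqn_Pkn}), with Lemma \ref{lemma_prod_of_W_with_W} and (\ref{eqn_sum_of_P_and_Q}) entering only inside the inductive step. You instead observe that $Q_{n,d}W_{n,d}=W_{n,d-1}^T\bigl(W_{n,d-1}W_{n,d}\bigr)=O$ by associativity and Lemma \ref{lemma_prod_of_W_with_W}, and then conclude from $P_{n,d}=nI-Q_{n,d}$. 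This is non-circular: (\ref{eqn_sum_of_P_and_Q}) is derived from the entry-wise descriptions of $Q_{n,d}$ and $P_{n,d}$ before Lemma \ref{lemma_prod_of_W_with_W} and independently of it, and Lemma \ref{lemma_prod_of_W_with_W} is proved by its own induction on the $W$ matrices alone. The one caveat to your framing is that the claim ``no entry-wise combinatorial computation is needed'' is only true relative to the paper's development: the identity $Q_{n,d}+P_{n,d}=nI$ that you lean on is itself established by the entry-wise analysis of both products. Within that logical order, however, your argument is complete and eliminates an entire double induction; what the paper's longer proof buys in exchange is essentially nothing for this lemma, since the block-partition machinery is already exercised elsewhere (e.g., in Lemma \ref{lemma_Rkn}).
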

\begin{proof}
For the first equation we use the method of double induction on $d$ and $n$. Let $d=1$ and $n\geq 2$. Since every column of $W_{n,1}$ has exactly one $+1$ and one $-1$ so by using equation (\ref{eqn_P1n}) we have 
$P_{n,1}W_{n,1} = (nI_{n}-J_{n,n})W_{n,1} = nW_{n,1}$. Let $d\geq 2$ and $n = d+1$. Then by equations (\ref{eqn_Wkk+1}) and (\ref{eqn_Pkk+1}) we obtain 
\begin{align*}
P_{d+1,d}W_{d+1,d} = [(-1)^{i+j}]_{0\leq i,j\leq d}[(-1)^{i}]_{0\leq i \leq d} = (d+1)[(-1)^{i}]_{0\leq i \leq d}  = (d+1)W_{d+1,d}.
\end{align*}
Let us assume that $P_{n_0,d_0}W_{n_0,d_0} = n_0W_{n_0,d_0}$ for all $1\leq d_0 < d$ and $d_0 +1 \leq n_0 <n$. Therefore, by using the equations (\ref{eqn_sum_of_P_and_Q}), (\ref{eqn_Wkn}), and (\ref{eqn_Pkn}) together with Lemma \ref{lemma_prod_of_W_with_W} we obtain
\begin{align*}
P_{n,d}W_{n,d} &= \begin{bmatrix}
        P_{n-1,d-1} & (-1)^{d}W_{n-1,d-1}\\
        (-1)^{d}W^T_{n-1,d-1} & I_{\binom{n-1}{d}}+P_{n-1,d}
    \end{bmatrix} \begin{bmatrix}
        W_{n-1,d-1} & O_{\binom{n-1}{d-1},\binom{n-1}{d+1}}\\
        (-1)^{d}I_{\binom{n-1}{d}} & W_{n-1,d}\\
    \end{bmatrix}\\
    &= \begin{bmatrix}
    P_{n-1,d-1}W_{n-1,d-1} + W_{n-1,d-1} & (-1)^{d}W_{n-1,d-1}W_{n-1,d}\\
    (-1)^{d}(Q_{n-1,d}+P_{n-1,d}+I_{\binom{n-1}{d}}) & W_{n-1,d} + P_{n-1,d}W_{n-1,d}
    \end{bmatrix}\\
    &= \begin{bmatrix}
    (n-1)W_{n-1,d-1} + W_{n-1,d-1} & O_{\binom{n-1}{d-1},\binom{n-1}{d+1}}\\
    (-1)^{d}((n-1)I_{\binom{n-1}{d}}+I_{\binom{n-1}{d}}) & W_{n-1,d} + (n-1)W_{n-1,d}
    \end{bmatrix}\\
    &= n\begin{bmatrix}
        W_{n-1,d-1} & O_{\binom{n-1}{d-1},\binom{n-1}{d+1}}\\
        (-1)^{d}I_{\binom{n-1}{d}} & W_{n-1,d}\\
    \end{bmatrix} = nW_{n,d}.
\end{align*}
Lemma \ref{lemma_prod_of_W_with_W} implies that $P_{n,d}W_{n,d-1}^T = W_{n,d}W_{n,d}^TW_{n,d-1}^T = O_{\binom{n}{d},\binom{n}{d}}$.
\end{proof}
As an immediate corollary of Lemma \ref{lemma_rank_of_Wnk} and Lemma \ref{lemma_prod_of_P_and_W} we have the following result.
\begin{cor}\label{cor_P_is_idempotent}
    Let $d\geq 1$ and $n\geq d+1$. Then the eigenvalues of $P_{n,d}$ are $n$ and $0$, with multiplicities are $\binom{n-1}{d}$ and $\binom{n-1}{d-1}$. Moreover, the eigenspace for $n$ is $\text{col}(W_{n,d})$, and for $0$ it is $\text{row}(W_{n,d-1})$. In particular, we have $P_{n,d}^2 = nP_{n,d}$. \qed
\end{cor}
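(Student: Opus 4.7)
The plan is to leverage Lemma \ref{lemma_prod_of_P_and_W} together with the rank computation in Lemma \ref{lemma_rank_of_Wnk} to exhibit two eigenspaces of $P_{n,d}$ whose dimensions already account for the entire ambient space $\mathbb{R}^{\binom{n}{d}}$. Since $P_{n,d} = W_{n,d} W_{n,d}^T$ is a real symmetric (and in fact positive semidefinite) matrix, eigenspaces for distinct eigenvalues are automatically orthogonal; this is the fact that will let me convert containments of eigenspaces into equalities by a dimension count.

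First, the identity $P_{n,d} W_{n,d} = n W_{n,d}$ from Lemma \ref{lemma_prod_of_P_and_W} shows that every vector in $\text{col}(W_{n,d})$ is an eigenvector of $P_{n,d}$ with eigenvalue $n$, and by Lemma \ref{lemma_rank_of_Wnk} this column space has dimension $\binom{n-1}{d}$. Second, the identity $P_{n,d} W_{n,d-1}^T = O$ shows that every column of $W_{n,d-1}^T$, equivalently every vector in $\text{row}(W_{n,d-1})$, lies in $\ker(P_{n,d})$; applying Lemma \ref{lemma_rank_of_Wnk} with $d$ replaced by $d-1$ gives dimension $\binom{n-1}{d-1}$ for this subspace.

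The two dimensions satisfy Pascal's identity $\binom{n-1}{d} + \binom{n-1}{d-1} = \binom{n}{d}$, so together the two subspaces already exhaust the ambient dimension. Since they are contained in eigenspaces of $P_{n,d}$ for the distinct eigenvalues $n$ and $0$, orthogonality forces their sum to be a direct sum, hence the inclusions $\text{col}(W_{n,d}) \subseteq E_n$ and $\text{row}(W_{n,d-1}) \subseteq E_0$ are equalities, and $P_{n,d}$ has no other eigenvalues. The multiplicities and eigenspaces claimed in the corollary follow at once. Finally, the relation $P_{n,d}^2 = n P_{n,d}$ is immediate from $P_{n,d} W_{n,d} = n W_{n,d}$, since
\begin{equation*}
P_{n,d}^2 = P_{n,d} \cdot W_{n,d} W_{n,d}^T = (P_{n,d} W_{n,d}) W_{n,d}^T = n W_{n,d} W_{n,d}^T = n P_{n,d}.
\end{equation*}

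There is no real obstacle here: the two supporting lemmas have already done the heavy lifting. The only mildly delicate point is to justify that the two exhibited subspaces exactly fill up the eigenspaces (rather than merely being contained in them), which is handled cleanly by the orthogonality of eigenspaces of the symmetric matrix $P_{n,d}$ combined with the Pascal identity.
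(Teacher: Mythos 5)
Your proof is correct and follows essentially the same route as the paper, which derives the corollary directly from Lemma \ref{lemma_rank_of_Wnk} and Lemma \ref{lemma_prod_of_P_and_W}; the dimension count via Pascal's identity and the orthogonality of eigenspaces of the symmetric matrix $P_{n,d}$ is exactly the intended argument. No gaps.
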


\noindent Let $d\geq 2$ and $n\geq d+1$. We define the following two families of matrices as follows
\begin{align}
    M_{n,d} &:= \begin{bmatrix}
        P_{n-1,d-1} & W_{n-1,d-1}
    \end{bmatrix},\\
    R_{n,d} &:= M_{n,d}^TM_{n,d}.\label{eqn:Rkn}
\end{align}
The next result is about a block partition and the eigenvalues of the matrix $R_{n,d}$.
\begin{lemma}\label{lemma_Rkn}
Let $d\geq 2$ and $n\geq d+1$. Then we have 
\begin{align*}
        R_{n,d} = \begin{bmatrix}
        (n-1)P_{n-1,d-1} & (n-1)W_{n-1,d-1}\\
        (n-1)W^T_{n-1,d-1}& (n-1)I_{\binom{n-1}{d}}-P_{n-1,d} \end{bmatrix}. 
\end{align*}
Furthermore, the eigenvalues of $R_{n,d}$ are $0$ and $n(n-1)$, with respective multiplicities are $\binom{n}{d}-\binom{n-2}{d-1}$ and $\binom{n-2}{d-1}$.
\end{lemma}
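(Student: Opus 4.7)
The plan is to verify the block partition by direct computation, then exploit the fact that $R_{n,d}=M_{n,d}^TM_{n,d}$ shares its non-zero spectrum with $M_{n,d}M_{n,d}^T$, which turns out to be a scalar multiple of the already-understood idempotent-like matrix $P_{n-1,d-1}$.

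For the block partition, I would expand
\[
R_{n,d} = \begin{bmatrix}P_{n-1,d-1}^T\\ W_{n-1,d-1}^T\end{bmatrix}\begin{bmatrix}P_{n-1,d-1} & W_{n-1,d-1}\end{bmatrix}
\]
into four blocks. The top-left block is $P_{n-1,d-1}^2$, which by Corollary \ref{cor_P_is_idempotent} equals $(n-1)P_{n-1,d-1}$. The top-right and (by symmetry of $P_{n-1,d-1}$) bottom-left blocks equal $P_{n-1,d-1}W_{n-1,d-1}=(n-1)W_{n-1,d-1}$ by Lemma \ref{lemma_prod_of_P_and_W}. The bottom-right block is $W_{n-1,d-1}^TW_{n-1,d-1}=Q_{n-1,d}$ by definition \eqref{eqn_Q_defining}, which by \eqref{eqn_sum_of_P_and_Q} equals $(n-1)I_{\binom{n-1}{d}}-P_{n-1,d}$. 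This gives exactly the claimed block form.

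For the spectrum, the key observation is that $M_{n,d}^TM_{n,d}$ and $M_{n,d}M_{n,d}^T$ have identical non-zero eigenvalues (with multiplicities). I would compute
\[
M_{n,d}M_{n,d}^T = P_{n-1,d-1}^2 + W_{n-1,d-1}W_{n-1,d-1}^T = (n-1)P_{n-1,d-1}+P_{n-1,d-1}=nP_{n-1,d-1},
\]
using Corollary \ref{cor_P_is_idempotent} for the square and the definition \eqref{eqn_P_defining} of $P_{n-1,d-1}$ for the second term. By that same Corollary, $P_{n-1,d-1}$ has eigenvalues $n-1$ with multiplicity $\binom{n-2}{d-1}$ and $0$ with multiplicity $\binom{n-2}{d-2}$, so $nP_{n-1,d-1}$ has eigenvalues $n(n-1)$ and $0$ with the same respective multiplicities. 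Transferring to $R_{n,d}$, the non-zero eigenvalue $n(n-1)$ keeps multiplicity $\binom{n-2}{d-1}$, and since $R_{n,d}$ has size $\binom{n}{d}\times\binom{n}{d}$ (because $M_{n,d}$ has $\binom{n-1}{d-1}+\binom{n-1}{d}=\binom{n}{d}$ columns by Pascal's identity), the eigenvalue $0$ accounts for the remaining $\binom{n}{d}-\binom{n-2}{d-1}$ dimensions.

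The only mild subtlety is checking that the dimensions line up correctly, which is where Pascal's identity $\binom{n-1}{d-1}+\binom{n-1}{d}=\binom{n}{d}$ and the rank computation $\operatorname{rank}(R_{n,d})=\operatorname{rank}(M_{n,d}M_{n,d}^T)=\binom{n-2}{d-1}$ must agree; both follow immediately. No genuine obstacle arises, since every piece is either a definition, the symmetry $P_{n-1,d-1}^T=P_{n-1,d-1}$, or a direct quote of Lemma \ref{lemma_prod_of_P_and_W} and Corollary \ref{cor_P_is_idempotent}.
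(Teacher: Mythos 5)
Your proposal is correct and follows essentially the same route as the paper: the block form is obtained by expanding $M_{n,d}^TM_{n,d}$ and invoking Corollary \ref{cor_P_is_idempotent}, Lemma \ref{lemma_prod_of_P_and_W}, and the identity $Q_{n-1,d}=(n-1)I-P_{n-1,d}$, and the spectrum is read off from $M_{n,d}M_{n,d}^T=nP_{n-1,d-1}$ via the equality of non-zero spectra of $M^TM$ and $MM^T$. Nothing is missing.
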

\begin{proof} 
By using equation (\ref{eqn_sum_of_P_and_Q}), Lemma \ref{lemma_prod_of_P_and_W}, and Corollary \ref{cor_P_is_idempotent} we obtain
 \begin{align*}
    R_{n,d} &= \begin{bmatrix}
        P_{n-1,d-1}\\
        W^T_{n-1,d-1}
    \end{bmatrix}\begin{bmatrix}
        P_{n-1,d-1} & W_{n-1,d-1}
    \end{bmatrix}
    = \begin{bmatrix}
        P^2_{n-1,d-1} & P_{n-1,d-1}W_{n-1,d-1}\\
        W^T_{n-1,d-1}P_{n-1,d-1} & W^T_{n-1,d-1}W_{n-1,d-1}
    \end{bmatrix}\\
    &= \begin{bmatrix}
        (n-1)P_{n-1,d-1} & P_{n-1,d-1}W_{n-1,d-1}\\
        W^T_{n-1,d-1}P_{n-1,d-1} & Q_{n-1,d}
    \end{bmatrix}
    = \begin{bmatrix}
        (n-1)P_{n-1,d-1} & P_{n-1,d-1}W_{n-1,d-1}\\
        W^T_{n-1,d-1}P_{n-1,d-1} & (n-1)I_{\binom{n-1}{d}}-P_{n-1,d}
    \end{bmatrix}\\
    &= \begin{bmatrix}
        (n-1)P_{n-1,d-1} & (n-1)W_{n-1,d-1}\\
        (n-1)W^T_{n-1,d-1}& (n-1)I_{\binom{n-1}{d}}-P_{n-1,d}
    \end{bmatrix}.
\end{align*}
    For the eigenvalues of $R_{n,d}$, we use equation (\ref{eqn_P_defining}) and Corollary \ref{cor_P_is_idempotent} to obtain
    \begin{align*}
        M_{n,d}M^T_{n,d} &= \begin{bmatrix}
        P_{n-1,d-1} & W_{n-1,d-1}
    \end{bmatrix} \begin{bmatrix}
        P_{n-1,d-1}\\
        W^T_{n-1,d-1}
    \end{bmatrix}
    = P^2_{n-1,d-1}+W_{n-1,d-1}W^T_{n-1,d-1}  =  P^2_{n-1,d-1}+P_{n-1,d-1}\\
    &= (n-1)P_{n-1,d-1}+P_{n-1,d-1} = nP_{n-1,d-1}.
    \end{align*}
   By Corollary \ref{cor_P_is_idempotent} we have that the eigenvalues of $M_{n,d}M^T_{n,d}$ are $0$ and $n(n-1)$, with respective multiplicities are $\binom{n-2}{d-2}$ and $\binom{n-2}{d-1}$. Therefore, the second statement about the eigenvalues of $R_{n,d}$ follows since $M_{n,d}M^T_{n,d}$ and $R_{n,d} = M^T_{n,d}M_{n,d}$ have the same non-zero eigenvalues.
\end{proof}

In the rest of the section, we provide proofs of Theorem \ref{Main_thm_1}, Theorem \ref{main_thm_2}, and Theorem \ref{prop_pos_and_neg_degree}.
\begin{proof}[Proof of Theorem \ref{Main_thm_1}]
It follows from Corollary \ref{cor_P_is_idempotent} since by equation (\ref{eqn_A_in_terms_of_P}) we have $A_{n,d} = (n-d)I-P_{n,d}$.
\end{proof}

\begin{proof}[Proof of Theorem \ref{main_thm_2}] 
By equations (\ref{eqn_Pkn}) and the first part of Lemma \ref{lemma_Rkn} it is clear that the zero and non-zero pattern of $R_{n,d}$ is same as that of $P_{n,d}$. Therefore, by equation (\ref{eqn_Pnk_entrywise}) we have that $R_{n,d} \in \mathcal{S}(J(n,d))$. From the second part of Lemma \ref{lemma_Rkn} the matrix $R_{n,d}$ is positive semidefinite with $\text{rank}(R_{n,d}) = \binom{n-2}{d-1}$. Thus, the result follows from equation (\ref{eqn_zero_forcing_mr_msr}).
\end{proof}

\begin{proof}[Proof of Theorem \ref{prop_pos_and_neg_degree}]
Since the Johnson graph $J(n,d)$ is $d(n-d)$-regular so we only need to prove the statement for $k_+(\alpha)$. For that we count the number of $1$s in the row of $A_{n,d}$ corresponding to $\alpha$ by using the method of double induction. Let $d = 1$ and $n\geq 2$. Then the statement follows from equation (\ref{eqn_An1}). For $n=d+1$ and $d\geq 1$ we use equation (\ref{eqn_Akk+1}). If $d$ is odd, then each row has of $A_{d+1,d}$ has $\lceil\frac{d}{2}\rceil$ number of $1$s. If $d$ is even, then the number $1$s in each row of $A_{d+1,d}$ alternates between $\frac{d}{2}$ and $\frac{d}{2}-1$ from the top to bottom. That is exactly captured by $r_{d+1,d}(\alpha)$ since it alternates between $0$ and $1$ from the top to bottom. Let us assume that the statement is true for all $1\leq d_0 < d$ and $d_0+1\leq n_0 < n$. Let $\alpha' \in \mathcal{B}_{n-1}$ is the binary word obtained from $\alpha$ by removing its last entry. Let us recall the block partition (\ref{eqn_Akn}) of the matrix $A_{n,d}$ for the remaining proof. We consider two different cases.
\begin{description}
    \item[Case 1] Suppose $d$ is even. If the last entry of $\alpha$ is $1$, then $\alpha' \in \mathcal{B}_{n-1,d-1}$ and $r_{n,d}(\alpha) = r_{n-1,d-1}(\alpha')$. By the induction hypothesis the contribution for number of $1$s from $A_{n-1,d-1}$ is $\frac{d}{2}(n-d)$. The remaining contribution of $r_{n-1,d-1}(\alpha')$ is due to $-W_{n-1,d-1}$ by Remark \ref{remark_about_non_Zero_entries_in_Wnk}. If the last entry of $\alpha$ is $0$, then $\alpha' \in \mathcal{B}_{n-1,d}$ and $r_{n,d}(\alpha) = r_{n-1,d}(\alpha')$. By the induction hypothesis the contribution for number of $1$s from $A_{n-1,d}$ is $\frac{d}{2}(n-d-1)+r_{n-1,d}(\alpha')$. The remaining contribution of $\frac{d}{2}$ is from $-W_{n-1,d-1}^T$ by Remark \ref{remark_about_non_Zero_entries_in_Wnk}.
    \item[Case 2] Suppose $d$ is odd. If the last entry of $\alpha$ is $1$, then $\alpha' \in \mathcal{B}_{n-1,d-1}$ and $r_{n-1,d-1}(\alpha')+s_{n-1,d-1}(\alpha') = n-d$. By the induction hypothesis the contribution for number of $1$s from $A_{n-1,d-1}$ is $\frac{d-1}{2}(n-d)+r_{n-1,d-1}(\alpha')$. The remaining contribution of $s_{n-1,d-1}(\alpha')$ is due to $W_{n-1,d-1}$ by Remark \ref{remark_about_non_Zero_entries_in_Wnk}. Suppose the last entry of $\alpha$ is $0$. By the induction hypothesis the contribution for number of $1$s from $A_{n-1,d}$ is $\frac{d+1}{2}(n-d-1)$. The remaining contribution of $\frac{d+1}{2}$ is from $W_{n-1,d-1}^T$ by Remark \ref{remark_about_non_Zero_entries_in_Wnk}. 
\end{description} 
Hence, this completes the proof.
\end{proof}

\subsection{Ancillary results}\label{sec:ancillary_results}
This section includes several ancillary results based on the main results from the previous section.

\subsubsection{Weighing matrices}\label{sec:weighing}
Recall that a \emph{weighing matrix} of weight $w$ and order $n$ is a square $n\times n$ matrix $A$ over $\{0,-1,1\}$ such that $AA^T = wI_n$.
The following theorem is proved by Gregory \cite{gregory2012spectra}.
\begin{theorem}[Gregory \cite{gregory2012spectra}]\label{thm:Gregory}
    Let $\dot{G} = (G,\sigma)$ be a signed graph. Then $\rho(\dot{G}) \geq \sqrt{k}$ where $k$ is the average degree of $G$. Moreover, equality happens if and only if $G$ is $k$-regular and $A(\dot{G})$ is a symmetric weighing matrix of weight $k$. 
\end{theorem}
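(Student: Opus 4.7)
The plan is to prove both inequality and equality case by comparing two expressions for $\mathrm{trace}(A(\dot{G})^2)$: one obtained combinatorially and the other spectrally.

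First I would compute $\mathrm{trace}(A(\dot{G})^2)$ combinatorially. Since $A(\dot{G})$ has entries in $\{-1,0,1\}$, the $(v,v)$-diagonal entry of $A(\dot{G})^2$ equals $\sum_u A(\dot{G})_{v,u}^2$, which counts the number of neighbors of $v$ in the underlying graph $G$. Summing over vertices gives
\begin{equation*}
\mathrm{trace}(A(\dot{G})^2) = \sum_{v\in V(G)} \deg_G(v) = 2|E(G)| = nk,
\end{equation*}
where $n = |V(G)|$ and $k$ is the average degree. Note that the signs do not matter for this computation because they are squared away.

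Next, the spectral side. Since $A(\dot{G})$ is real symmetric with real eigenvalues $\lambda_1,\ldots,\lambda_n$, we have
\begin{equation*}
\mathrm{trace}(A(\dot{G})^2) = \sum_{i=1}^n \lambda_i^2 \leq n\cdot \rho(\dot{G})^2,
\end{equation*}
by definition of $\rho(\dot{G})$ as the largest absolute value of an eigenvalue. Combining with the combinatorial identity yields $nk \leq n\rho(\dot{G})^2$, hence $\rho(\dot{G}) \geq \sqrt{k}$, which establishes the inequality.

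For the characterization of equality, observe that equality $\rho(\dot{G}) = \sqrt{k}$ forces $\sum_i \lambda_i^2 = n\rho(\dot{G})^2$, which in turn forces $|\lambda_i| = \sqrt{k}$ for every $i$. Equivalently, $A(\dot{G})^2 = kI_n$, and since $A(\dot{G})$ is symmetric, this says $A(\dot{G})A(\dot{G})^T = kI_n$, so $A(\dot{G})$ is a symmetric weighing matrix of weight $k$. Reading the diagonal of $A(\dot{G})^2 = kI_n$ via the combinatorial formula above shows $\deg_G(v) = k$ for every $v$, so $G$ is $k$-regular. Conversely, if $G$ is $k$-regular and $A(\dot{G})$ is a weighing matrix of weight $k$, then $A(\dot{G})^2 = kI_n$ gives eigenvalues $\pm\sqrt{k}$ and hence $\rho(\dot{G}) = \sqrt{k}$. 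I do not anticipate any real obstacle: the argument is essentially a one-line trace computation paired with the standard variance-style bound $\sum \lambda_i^2 \leq n\rho^2$, with equality forcing $|\lambda_i|$ to be constant.
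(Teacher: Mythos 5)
Your proof is correct. Note that the paper does not prove this statement at all — it is quoted as a known result of Gregory with a citation — so there is no in-paper argument to compare against; your trace computation $\mathrm{trace}(A(\dot{G})^2)=2|E(G)|=nk$ paired with $\sum_i\lambda_i^2\le n\rho(\dot{G})^2$, and the observation that equality forces $|\lambda_i|=\sqrt{k}$ for all $i$ and hence $A(\dot{G})^2=kI_n$ (giving both regularity from the diagonal and the weighing-matrix property), is the standard and complete proof of this fact.
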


Since $J(n,d)$ is $d(n-d)$-regular so as an immediate corollary of Theorem \ref{Main_thm_1} and Theorem \ref{thm:Gregory} we obtain the following proposition. However, it is easy to verify directly by definition as well. 
\begin{proposition}\label{prop_about_Weighing_matrices} 
    Let $d\geq 1$. Then $A_{2d,d}$ is a weighing matrix of weight $d^2$ and order $\binom{2d}{d}$.  \qed
\end{proposition}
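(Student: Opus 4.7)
The plan is to give two short proofs, one invoking the cited theorems as the excerpt advertises, and one verifying the identity $A_{2d,d}A_{2d,d}^T=d^2 I$ directly from the algebraic identities already established in Section~\ref{sec:reg_Johnson}. Both rely only on what has been proved earlier in the paper, and neither requires any new computation with the entry-wise formula~(\ref{eqn_A_entrywise}).

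For the first approach, I would first record the three defining requirements of a weighing matrix and check them one at a time. The entries of $A_{2d,d}$ lie in $\{-1,0,1\}$ directly from~(\ref{eqn_A_entrywise}), and the order is $|\mathcal{B}_{2d,d}|=\binom{2d}{d}$. The Johnson graph $J(n,d)$ is $d(n-d)$-regular, so the underlying graph of $\dot{J}(2d,d)$ is $d^2$-regular. By Theorem~\ref{Main_thm_1}, the eigenvalues of $A_{2d,d}$ are $-d$ and $n-d=d$, so $\rho(A_{2d,d})=d=\sqrt{d^2}$, which is exactly the equality case of Theorem~\ref{thm:Gregory}. The conclusion of that theorem is precisely that $A_{2d,d}$ is a symmetric weighing matrix of weight $d^2$.

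For the direct verification, I would use the identity $A_{n,d}=(n-d)I-P_{n,d}$ from~(\ref{eqn_A_in_terms_of_P}) together with $P_{n,d}^2=nP_{n,d}$ from Corollary~\ref{cor_P_is_idempotent}. Since $A_{n,d}$ is symmetric, expanding gives
\begin{align*}
A_{n,d}A_{n,d}^T = A_{n,d}^2 = (n-d)^2 I - 2(n-d)P_{n,d}+P_{n,d}^2 = (n-d)^2 I+(2d-n)P_{n,d}.
\end{align*}
Specialising to $n=2d$, the coefficient of $P_{n,d}$ vanishes and we obtain $A_{2d,d}^2=d^2 I$, completing the proof. There is no real obstacle here; the only thing to be careful about is citing the correct earlier identities so that the argument is transparent and genuinely an ``immediate corollary'' rather than a computation in disguise.
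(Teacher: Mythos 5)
Your proposal is correct and matches the paper, which derives the proposition as an immediate corollary of Theorem~\ref{Main_thm_1} and Gregory's Theorem~\ref{thm:Gregory} exactly as in your first argument. Your second argument, via $A_{n,d}=(n-d)I-P_{n,d}$ and $P_{n,d}^2=nP_{n,d}$, is a clean write-up of the direct verification the paper mentions but does not spell out, and it checks out.
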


Let $G$ be a $k$-regular graph with $n$ vertices, and let $A$ represent its signed adjacency matrix, satisfying $A^2 = k I_n$. Let us define the matrix $B = 
\begin{bmatrix}
A & I_n \\
I_n & -A
\end{bmatrix}$. It follows that $B^2 = (k + 1) I_{2n}$. Furthermore, $B$ is a signed adjacency matrix for the $(k + 1)$-regular graph $H = G \Osq K_2$. Thus, by starting with any $4$-regular graph that has an orthogonal signed adjacency matrix, one can construct a $5$-regular graph with an orthogonal signed adjacency matrix. In \cite{mckee2007integer}, numerous $4$-regular graphs with orthogonal signed adjacency matrix are constructed. Thus, in \cite{belardo2019open}, the natural question of identifying another $5$-regular graph that is not formed through the above method was posed (see \cite[Problem 3.24]{belardo2019open}). In \cite{alon2021unitary}, this question was addressed with the construction of a $5$-regular graph that is not of the above form.

It is worth noting here that $J(2d, d)$ is not a Cartesian product of any graph with $K_2$. One way to illustrate this is by using the eigenvalues. If the distinct eigenvalues of a graph $G$ are $\theta_i$, then all the distinct eigenvalues of the Cartesian product $G \Osq K_2$ are $\theta_i \pm 1$. Therefore, if $k$ is the largest eigenvalue of $G \Osq K_2$, then $k-2$ must also be an eigenvalue. However, using Equation \ref{eq:john_eig}, the eigenvalues of $J(2d, d)$ are given by $(d - i)^2 - i$ for $0 \leq i \leq d$, which do not include $d^2-2$ as one of its eigenvalues.

\subsubsection{Linear ternary codes}\label{sec:tern_codes}
We first introduce some definitions which are necessary to state our results. For a background on coding theory, linear codes, and their applications, see for e.g., \cite{lint1999introduction}.
Let $N$ be a positive integer, and let $q$ be a prime power. A \emph{linear $q$-ary code} $\C$ of \emph{length} $N$ is a linear subspace of the vector space $\mathbb{F}_q^N$. If $q = 3$, then it is called as \emph{linear ternary code}. The \emph{dimension} $|\C|$ of a linear code $\C$ is its dimension as a linear subspace over $\F_q$. The elements of $\C$ are called \emph{codewords}. The \emph{size} of $\C$ is the number of codewords which is equal to $q^{|\C|}$. The \emph{weight} $X(\alpha)$ of a codeword $\alpha$ is the number of non-zero entries of $\alpha$. The \emph{distance} $\text{dist}(\C)$ of a linear code $\C$ is the minimum weight of its non-zero codewords. Technically, it is defined as the minimum hamming distance between any two distinct codewords. However, for linear codes these two definitions are equivalent. The \emph{dual code} $\C^{\perp}$ is the set of all vectors of $\F_q^N$ which are orthogonal to all codewords of $\C$ over $\F_q$. That is, $\C^{\perp} = \{\beta \in \F_q^N\ |\ \alpha\cdot \beta = 0 \pmod{q} \text{ for all }\alpha\in \C\}$. The code $\C$ is called \emph{self-orthogonal} if $\C \subseteq \C^\perp$, and it is called \emph{self-dual} if $\C = \C^{\perp}$. Note that $|\C| + |\C^{\perp}| = N$. 

Let $A$ be a matrix over $\F_q$ with $N$ columns. The rows of $A$ can be viewed as vectors in $\F_q^N$. Consequently, the linear span of these rows generates a linear $q$-ary code, which we denote by $\C_A$. Consider a signed graph $\dot{G}$ of order $N$. The signed adjacency matrix $A = A(\dot{G})$ can be interpreted as a matrix over $\F_3$ by realizing $2$ as $-1$. Thus, we can consider the linear ternary code $\C_A$ generated by $A$ which has length $N$. 

Stani\'{c} \cite{stanic2024linear} studied the linear ternary codes $\C_A, \C_{A-I}$, and $\C_{A+I}$ where $A$ is the signed adjacency matrix of a strongly regular signed graph. A strongly regular signed graph generalizes the concept of strongly regular graphs which is defined in \cite{stanic2019strongly} (see,  \cite{stanic2019strongly,stanic2024linear} for definition). Note that every signed graph with two eigenvalues is inherently strongly regular. Therefore, it follows from Theorem \ref{Main_thm_1} that the signed Johnson graph $\dot{J}(n,d)$ is a strongly regular. The results from \cite{stanic2024linear} are about the dimension and distance of linear ternary codes, including the following theorem (see \cite[Theorem 3.1]{stanic2024linear}), which we apply to the signed adjacency matrix $A_{n,d}$ of the Johnson graph $J(n,d)$.

\begin{theorem}(\cite[Theorem 3.1]{stanic2024linear})\label{thm_Stanic}
    Let $\dot{G}$ be a signed graph of order $N$ and two integral eigenvalues, $\lambda$ of multiplicity $m_{\lambda}$ and $\mu$ of multiplicity $m_{\mu}$. If $A$ is the signed adjacency matrix of $\dot{G}$, then the following statement holds true:
    \begin{enumerate}
        \item If $\lambda\mu = 0 \pmod{3}$ and $\lambda+\mu = 0\pmod{3}$, then $\C_A$ is self-orthogonal with $|\C_A| \leq \min\{m_{\lambda},m_{\mu}\}$. In addition $\C_{A+I} = \C_{A-I} = \mathbb{F}_3^N$.
        \item If $\lambda\mu = 0 \pmod{3}$ and $\lambda+\mu \neq 0\pmod{3}$, then $\C_{A-(\lambda+\mu)I} = \C_A^{\perp}$ and exactly one of equalities $\lambda = 0 \pmod{3}$, $\mu = 0 \pmod{3}$ holds. If $\lambda = 0 \pmod{3}$, then $|\C_A| = m_{\mu}$ and $|\C_{A-(\lambda+\mu)I}| = m_{\lambda}$. 
        \item If $\lambda\mu \neq 0 \pmod{3}$ and $\lambda+\mu = 0\pmod{3}$, then $\C_A = \mathbb{F}_3^N$ and $\C_{A-I} = \C_{A+I}^\perp$ where $\{|\C_{A-I}|,|\C_{A+I}|\} = \{m_{\lambda},m_{\mu}\}$.
        \item If $\lambda\mu \neq 0 \pmod{3}$ and $\lambda+\mu \neq 0\pmod{3}$, then $\C_{A} = \C_{A-(\lambda+\mu)I} = \mathbb{F}_3^N$ and $\C_{A-\lambda I} = \C_{A-\mu I} \subseteq \C_{A-\lambda I}^\perp$, with $|\C_{A-\lambda I}|\leq \min\{m_{\lambda},m_{\mu}\}$.
    \end{enumerate}
\end{theorem}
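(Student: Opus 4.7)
The starting point is the minimal-polynomial identity
\[
A^2 - (\lambda+\mu) A + \lambda\mu I = 0,
\]
which holds because $A$ is symmetric with exactly two (integral) eigenvalues. Reducing this identity modulo $3$ and splitting according to the residues of $\lambda\mu$ and $\lambda+\mu$ mod $3$ produces the four cases of the statement; each residue pattern forces a different polynomial relation on $A$ over $\F_3$ from which invertibility, self-orthogonality, and the orthogonal-complement identifications all follow. A single auxiliary ingredient is the inequality $\mathrm{rank}_{\F_3}(M) \le \mathrm{rank}_{\mathbb{R}}(M)$ for any integer matrix $M$ (a consequence of Smith normal form); applied to $M = A - \lambda I$ and $M = A - \mu I$, whose real ranks are $m_\mu$ and $m_\lambda$ respectively, this yields every dimension inequality that appears in the statement.

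Running the cases: In Case~(1), $A^2 \equiv 0 \pmod 3$, hence $\C_A \subseteq \C_A^{\perp}$; since $A \equiv A - \lambda I \equiv A - \mu I$ the rank tool gives $|\C_A| \le \min\{m_\lambda, m_\mu\}$; finally $(A-I)(A+I) \equiv -I$ shows $A \pm I$ are invertible mod $3$, so $\C_{A\pm I} = \F_3^N$. In Case~(2), assume $\lambda \equiv 0 \pmod 3$ (the other sub-case is symmetric). Then $A$ satisfies $x(x-\mu)$ mod $3$, a polynomial with two distinct nonzero roots, so $A$ is diagonalizable over $\F_3$; hence $\mathrm{rank}_{\F_3}(A) + \mathrm{rank}_{\F_3}(A - \mu I) = N$. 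Combined with the real-rank upper bounds $m_\mu$ and $m_\lambda$ and the identity $m_\lambda + m_\mu = N$, equality is forced: $|\C_A| = m_\mu$ and $|\C_{A - \mu I}| = m_\lambda$. The containment $\C_{A - \mu I} \subseteq \C_A^{\perp}$ from $A(A - \mu I) \equiv 0$ then becomes equality by dimension count, and $A - (\lambda+\mu) I \equiv A - \mu I \pmod 3$ gives the stated identification. Case~(3) runs in parallel: here $\lambda + \mu \equiv 0$ and both are nonzero mod $3$, forcing $\{\lambda, \mu\} \equiv \{1, -1\} \pmod 3$; so $A^2 \equiv I$ gives invertibility of $A$ and $\C_A = \F_3^N$, while $(A-I)(A+I) \equiv 0$ with distinct roots $\pm 1$ enables the same diagonalization/pigeonhole argument to identify $\{|\C_{A-I}|, |\C_{A+I}|\}$ with $\{m_\lambda, m_\mu\}$ and to conclude $\C_{A-I} = \C_{A+I}^{\perp}$. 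Case~(4): $\lambda \not\equiv 0$, $\mu \not\equiv 0$, and $\lambda + \mu \not\equiv 0$ mod $3$ force $\lambda \equiv \mu \equiv \nu \pmod 3$ with $\nu \in \{1,2\}$; then $\C_{A - \lambda I} = \C_{A - \mu I}$, the relation $(A - \lambda I)^2 \equiv 0$ yields self-orthogonality, the rank tool gives $|\C_{A - \lambda I}| \le \min\{m_\lambda, m_\mu\}$, and both $A$ and $A - (\lambda+\mu)I$ are invertible mod $3$ since their mod-$3$ reductions have all eigenvalues equal to a nonzero residue.

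The main obstacle is upgrading the various containments of codes and inequalities on dimensions to \emph{equalities} in Cases~(2) and~(3). This hinges on the non-obvious observation that, when the mod-$3$ minimal polynomial of $A$ has two distinct roots, $A$ is diagonalizable over $\F_3$; only then do the two mod-$3$ ranks sum to exactly $N$, and together with $m_\lambda + m_\mu = N$ and the real-rank upper bounds, this squeezes each mod-$3$ multiplicity down to its integer counterpart by pigeonhole. Once this squeeze is in place, every remaining claim is either a dimension count (turning a containment $\C_{\bullet} \subseteq \C_{\bullet}^{\perp}$ into an equality) or a direct substitution using the residue of $\lambda$ or $\mu$ mod $3$.
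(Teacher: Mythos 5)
This theorem is imported verbatim from \cite[Theorem 3.1]{stanic2024linear}; the paper you are checking against gives no proof of it, only the citation (and a follow-up remark about what that proof additionally yields). Your reconstruction is correct and complete, and it is essentially the canonical argument for results of this type: reduce the integral relation $A^2-(\lambda+\mu)A+\lambda\mu I=O$ modulo $3$, use symmetry of $A$ to convert products $MN\equiv O$ into containments $\C_N\subseteq\C_M^{\perp}$, use $\mathrm{rank}_{\F_3}(M)\le\mathrm{rank}_{\mathbb{R}}(M)$ on $A-\lambda I$ and $A-\mu I$, and in Cases (2) and (3) squeeze the two mod-$3$ ranks against $m_\lambda+m_\mu=N$ via the fact that a matrix annihilated by a squarefree split polynomial is diagonalizable over $\F_3$ (equivalently, one can get $\mathrm{rank}(A-r_1I)+\mathrm{rank}(A-r_2I)\ge N$ directly from $(A-r_1I)-(A-r_2I)=(r_2-r_1)I$). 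All four case analyses check out, including the observation in Case (4) that the hypotheses force $\lambda\equiv\mu\pmod 3$, so there is nothing to correct.
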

Note that the proof of \cite[Theorem 3.1]{stanic2024linear} also implies that $\C_{A+(\lambda+\mu)I} = \F_3^N$ in part $(2)$, and explicitly determines the dimensions of $\C_{A+I}$ and $C_{A-I}$ in part $(3)$ of Theorem \ref{thm_Stanic}. Let $A = A_{n,d}$ be the signed adjacency matrix of the Johnson graph $J(n,d)$. By Theorem \ref{Main_thm_1}, the two integral eigenvalues of $A$ are $\lambda = n-d$ of multiplicity $m_{\lambda} = \binom{n-1}{d-1}$, and $\mu = -d$ of multiplicity $m_{\mu} = \binom{n-1}{d}$. Let $\bar{d} = d\pmod{3}$ and $\bar{n} = n\pmod{3}$. In Table \ref{table_for_linear_ternary_code}, by using  Theorem \ref{thm_Stanic}, we list 
the dimensions and the dual linear codes of $\C_A$, $\C_{A-I}$, and $\C_{A+I}$.
\begin{table}[ht]
    \centering
    \begin{tabular}{ |c|c|c|c| }
    \hline
  &  $\bar{n} = 0$ &  $\bar{n} = 1$ & $\bar{n} = 2$ \\
 \hline
$\bar{d} = 0$ & \vtop{\hbox{\strut $|\C_A| \leq \binom{n-1}{d-1}$, $\C_{A} \subseteq \C_A^{\perp}$,}\hbox{\strut $\C_{A-I} = \C_{A+I} = \F_3^{\binom{n}{d}}$}}& \vtop{\hbox{\strut $|\C_{A}| = \binom{n-1}{d-1}$,}\hbox{\strut $\C_{A-I} = \C_A^{\perp}$, $\C_{A+I} = \F_3^{\binom{n}{d}}$}} & \vtop{\hbox{\strut  $|\C_{A}| = \binom{n-1}{d-1}$,}\hbox{\strut $\C_{A+I} = \C_A^{\perp}$, $\C_{A-I} = \F_3^{\binom{n}{d}}$}} \\ 
 \hline
$\bar{d} = 1$ & \vtop{\hbox{\strut $|\C_{A+I}| \leq \binom{n-1}{d-1}$, $\C_{A+I} \subseteq \C_{A+I}^{\perp}$,}\hbox{\strut $\C_{A} = \C_{A-I} = \F_3^{\binom{n}{d}}$}} & \vtop{\hbox{\strut $|\C_{A+I}| = \binom{n-1}{d-1}$,}\hbox{\strut $\C_{A} = \C_{A+I}^{\perp}$, $\C_{A-I} = \F_3^{\binom{n}{d}}$}} & \vtop{\hbox{\strut $|\C_{A+I}| = \binom{n-1}{d-1}$,}\hbox{\strut $\C_{A-I} = \C_{A+I}^{\perp}$, $\C_{A} = \F_3^{\binom{n}{d}}$}} \\ 
 \hline
$\bar{d} = 2$ & \vtop{\hbox{\strut $|\C_{A-I}| \leq \binom{n-1}{d-1}$, $\C_{A-I} \subseteq \C_{A-I}^{\perp}$,}\hbox{\strut $\C_{A} = \C_{A+I} = \F_3^{\binom{n}{d}}$}} & \vtop{\hbox{\strut $|\C_{A-I}| = \binom{n-1}{d-1}$,}\hbox{\strut $\C_{A+I} = \C_{A-I}^{\perp}$, $\C_{A} = \F_3^{\binom{n}{d}}$}} & \vtop{\hbox{\strut $|\C_{A-I}| = \binom{n-1}{d-1}$,}\hbox{\strut $\C_{A} = \C_{A-I}^{\perp}$, $\C_{A+I} = \F_3^{\binom{n}{d}}$}} \\ 
 \hline
\end{tabular}
\vspace{0.1cm}
    \caption{Dimensions and dual codes of the linear ternary codes generated by the signed adjacency matrices of Johnson graphs}
    \label{table_for_linear_ternary_code}
\end{table}

It is important to highlight that we have complete information for the last two columns of  Table \ref{table_for_linear_ternary_code}. However, in the first column, each row contains three self-orthogonal codes with dimensions at most $\binom{n-1}{d-1}$. We think it would be interesting to compute these dimensions explicitly. For smaller values of $n$ and $d$, computational results indicate that all three dimensions in the first column are precisely $\binom{n-1}{d-1}-\binom{n-2}{d-2}$. A more intriguing and challenging problem would be to compute the distances of the non-trivial linear ternary codes $\C_{A}$, $\C_{A-I}$, and $\C_{A+I}$ for the signed adjacency matrix $A = A_{n,d}$ of the Johnson graph $J(n,d)$.  

\subsubsection{Tight frame graphs}\label{sec:tight_frames}
There is a nice frame theoretic perspective of finding a graph $G$ with $q(G) = 2$ (see for e.g., \cite{abdollahi2018frame}). A sequence of vectors $\mathcal{F} = \{f_i\}_{i=1}^\nu$ is a \emph{finite frame} for the standard $m$-dimensional Euclidean space $\mathbb{R}^m$ if there exists constants $0<A\leq B < \infty$ such that
$
    A||x||^2 \leq \sum_{i=1}^n|\langle x,f_i \rangle|^2 \leq B||x||^2
$
for all $x\in \mathbb{R}^m$. The constants $A$ and $B$ are called \emph{frame bounds}. If $A=B$, then the frame is called a \emph{tight frame}. Let $G$ be a graph of order $\nu$. Then a frame $\mathcal{F} = \{f_i\}_{i=1}^\nu$ is called a \emph{frame representation} for graph $G$ if there is a one-to-one correspondence between $\mathcal{F}$ and $V(G)$ such that for any two distinct vertices $u_i,u_j$ we have $\{u_i,u_j\}\in E(G)$ if and only if $\langle f_i,f_j \rangle \neq 0$. If a graph has a frame representation of a tight frame in $\mathbb{R}^m$, then it is called as a \emph{tight frame graph} for $\mathbb{R}^m$. In \cite[Theorem 5.2]{abdollahi2018frame}, it has been proved that $G$ is a tight frame graph for $\mathbb{R}^m$ if and only if there exists a positive semidefinite matrix $M \in \mathcal{S}(G)$ such that $q(M)=2$ and $\text{rank}(M) = m$.

In \cite{furst2020tight}, the authors identified certain line graphs as tight frame graphs, including the line graph of the complete graph $K_n$, that is the Johnson graph  $J(n,2)$. They proved that $J(n,2)$ is a tight frame graph for $\mathbb{R}^{n-2}$, with $n-2$ is the least possible dimension (see \cite[Theorem 4.3]{furst2020tight}). In this paper, we utilize the matrix $R_{n,d}$ (see equation (\ref{eqn:Rkn})) to provide more general result for Johnson graphs. We note in passing that the matrix referenced in the proof of \cite[Theorem 4.3]{furst2020tight} (see \cite[Theorem 3.18]{aim2008zero} and \cite[Theorem 3.1.31]{peters2012positive}) is quite similar to our matrix $R_{n,2}$. The following is an immediate corrollary of Lemma \ref{lemma_Rkn} and Theorem \ref{main_thm_2}.
\begin{cor}\label{cor_Johnson_is_tight_frame}
    Let $d\geq 2$ and $n\geq 2d$. Then the the Johnson graph $J(n,d)$ is a tight frame graph for $\mathbb{R}^{\binom{n-2}{d-1}}$, with $\binom{n-2}{d-1}$ is the least possible dimension. \qed
\end{cor}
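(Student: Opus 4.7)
The plan is to invoke the frame-theoretic characterization cited just before the statement: by \cite[Theorem 5.2]{abdollahi2018frame}, a graph $G$ is a tight frame graph for $\mathbb{R}^m$ if and only if there exists a positive semidefinite matrix $M \in \mathcal{S}(G)$ with $q(M) = 2$ and $\operatorname{rank}(M) = m$. Thus the corollary reduces to exhibiting such an $M$ for $G = J(n,d)$ with $m = \binom{n-2}{d-1}$, and ruling out smaller values of $m$.

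For the existence direction, I would take $M = R_{n,d}$ as defined in equation (\ref{eqn:Rkn}). The proof of Theorem \ref{main_thm_2} already established that $R_{n,d} \in \mathcal{S}(J(n,d))$, since the block expression in Lemma \ref{lemma_Rkn} has the same zero/nonzero off-diagonal pattern as $P_{n,d}$, which matches the adjacency structure of $J(n,d)$ by equation (\ref{eqn_Pnk_entrywise}). By the second part of Lemma \ref{lemma_Rkn}, the only eigenvalues of $R_{n,d}$ are $0$ and $n(n-1)$, so $R_{n,d}$ is positive semidefinite with $q(R_{n,d}) = 2$. The multiplicity count in the same lemma gives $\operatorname{rank}(R_{n,d}) = \binom{n-2}{d-1}$, so $J(n,d)$ is a tight frame graph for $\mathbb{R}^{\binom{n-2}{d-1}}$.

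For the optimality of the dimension, I would appeal directly to Theorem \ref{main_thm_2}, which asserts $mr_+(J(n,d)) = \binom{n-2}{d-1}$. Any positive semidefinite $M \in \mathcal{S}(J(n,d))$ with $q(M) = 2$ is in particular a positive semidefinite matrix in $\mathcal{S}(J(n,d))$, so its rank is bounded below by $mr_+(J(n,d)) = \binom{n-2}{d-1}$. Hence no smaller dimension is achievable via the \cite[Theorem 5.2]{abdollahi2018frame} characterization.

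There is no real obstacle here: the result is genuinely immediate once one recognizes that Lemma \ref{lemma_Rkn} delivers both the two-eigenvalue property and the correct rank simultaneously, while Theorem \ref{main_thm_2} supplies the matching lower bound on positive semidefinite rank. The only care needed is to cite the frame-theoretic equivalence correctly and to note that the matrix $R_{n,d}$ used in the proof of Theorem \ref{main_thm_2} is precisely the certificate required here.
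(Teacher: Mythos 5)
Your proposal is correct and matches the paper's intended argument exactly: the paper presents this as an immediate consequence of Lemma \ref{lemma_Rkn} and Theorem \ref{main_thm_2}, with $R_{n,d}$ serving as the positive semidefinite certificate under the characterization of \cite[Theorem 5.2]{abdollahi2018frame} and $mr_+(J(n,d)) = \binom{n-2}{d-1}$ supplying the optimality of the dimension. Nothing is missing.
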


\subsubsection{Induced subgraphs of Johnson graphs}\label{sec:sensitivity}
We apply the following technique from \cite{huang2019induced} to prove Proposition \ref{Prop_max_deg_ind_John}, which concerns the maximum degree of certain induced subgraphs of the Johnson graph $J(n,d)$. In \cite{huang2019induced}, a sequence of signed adjacency matrices $A_d$ is constructed for the hypercubes $H(d,2)$,  with eigenvalues $\pm \sqrt{d}$, each having multiplicity $2^{d-1}$. Using Lemma \ref{lem:huang_max_degree} and Lemma \ref{lem:Cauchy_interlace}, Huang proved in \cite{huang2019induced} that any induced subgraph $G$ of $H(d, 2)$ with $2^{d-1} + 1$ vertices satisfies $\Delta(G) \geq \sqrt{d}$ (see \cite[Theorem 1.1]{huang2019induced}). We use this idea in the following result. 

\begin{proposition}\label{Prop_max_deg_ind_John}
    Let $n$ and $d$ are positive integers such that $n\geq 2d$. Suppose $G$ is an arbitrary induced subgraph of the Johnson graph $J(n,d)$ with $r$ vertices. Then the following holds.
    \begin{enumerate}[(i)]
        \item If $r = \binom{n-1}{d}+1$, then $\Delta(G)\geq n-d$.
        \item If $r = \binom{n-1}{d-1}+1$, then $\Delta(G)\geq d$.
    \end{enumerate}
\end{proposition}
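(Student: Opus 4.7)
The plan is to follow Huang's strategy from \cite{huang2019induced}: combine the signed adjacency matrix $A_{n,d}$ of $J(n,d)$ constructed in Theorem \ref{Main_thm_1} with Cauchy's Interlacing Theorem (Lemma \ref{lem:Cauchy_interlace}) and the maximum-degree bound (Lemma \ref{lem:huang_max_degree}). The key input is that $A_{n,d}$ has only two distinct eigenvalues, $n-d$ with multiplicity $\binom{n-1}{d-1}$ and $-d$ with multiplicity $\binom{n-1}{d}$, while its entries lie in $\{-1,0,1\}$ with the zero/nonzero pattern of the Johnson graph.

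For part $(i)$, I would let $B$ be the principal submatrix of $A_{n,d}$ indexed by $V(G)$, of size $r = \binom{n-1}{d} + 1$. Since $G$ is an induced subgraph of $J(n,d)$, the matrix $B$ has entries in $\{-1,0,1\}$, is symmetric, and vanishes on the non-edges of $G$, so Lemma \ref{lem:huang_max_degree} yields $\Delta(G) \geq \lambda_1(B)$. Setting $N = \binom{n}{d}$ and applying Cauchy's interlacing with $i = 1$ gives
\[
\lambda_1(B) \;\geq\; \lambda_{1 + N - r}(A_{n,d}) \;=\; \lambda_{\binom{n-1}{d-1}}(A_{n,d}) \;=\; n-d,
\]
since the first $\binom{n-1}{d-1}$ eigenvalues of $A_{n,d}$ all equal $n-d$. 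Combining these two inequalities gives $\Delta(G) \geq n-d$.

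For part $(ii)$, the same principal submatrix $B$ has size $r = \binom{n-1}{d-1}+1$. Applying Cauchy's interlacing at the opposite end, with $i = r$, yields
\[
\lambda_r(B) \;\leq\; \lambda_r(A_{n,d}) \;=\; \lambda_{\binom{n-1}{d-1}+1}(A_{n,d}) \;=\; -d,
\]
because the $(\binom{n-1}{d-1}+1)$-st eigenvalue is the first occurrence of $-d$. Then $-B$ is also a signed $\{-1,0,1\}$-matrix compatible with $G$, with $\lambda_1(-B) = -\lambda_r(B) \geq d$, so Lemma \ref{lem:huang_max_degree} applied to $-B$ gives $\Delta(G) \geq d$.

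There is no real obstacle here beyond bookkeeping; the nontrivial work has already been done in Theorem \ref{Main_thm_1}, which supplies a signed adjacency matrix with exactly the right spectrum so that the interlacing cutoff lands precisely at $n-d$ (respectively $-d$). The only point requiring care is the correct indexing of the eigenvalues when applying Lemma \ref{lem:Cauchy_interlace}, so that the $(1 + N - r)$-th and $r$-th eigenvalues of $A_{n,d}$ coincide with the boundaries between the two eigenspaces.
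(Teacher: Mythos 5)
Your proof is correct and follows essentially the same route as the paper: take the principal submatrix $B$ of the two-eigenvalue signed matrix $A_{n,d}$ from Theorem \ref{Main_thm_1}, apply Cauchy interlacing to land on the eigenvalue $n-d$ (resp.\ $-d$), and invoke Lemma \ref{lem:huang_max_degree}. Your part $(ii)$, which interlaces at the bottom of the spectrum of $B$ and then passes to $-B$, is just a rephrasing of the paper's step of replacing $A_{n,d}$ by $-A_{n,d}$, and your index bookkeeping ($1+N-r=\binom{n-1}{d-1}$) is accurate.
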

\begin{proof} Let $r = \binom{n-1}{d}+1$, and let $A = A_{n,d}$ from Equation \ref{eqn_A_entrywise}. Suppose $B$ is the principal submatrix of $A$ corresponding to the vertices of $G$. By Lemma \ref{lem:Cauchy_interlace} and Theorem \ref{Main_thm_1}, we have $\lambda_1(B) \geq \lambda_{\binom{n-1}{d-1}}(A) = n-d$. Applying Lemma \ref{lem:huang_max_degree}, it follows that $\Delta(G) \geq \lambda_1(B) \geq n-d$. The statement $(ii)$ follows similarly by considering  $A = -A_{n,d}$.
\end{proof}

\section{The Hamming graphs}\label{section_Hamming}
Let $d,n \in \mathbb{N}$, $n\geq 2$, and let $Y_n :=\{0,1,\ldots,n-1\}$. Recall that the \emph{Hamming graph} $H(d,n)$ has the set $Y_n^d$ as its vertex set, where two $d$-tuples are adjacent if and only if they differ in exactly one coordinate. We denote the distance-$j$ graph of $H(d,n)$ by $H(d,n,j)$ for $2\leq j \leq d$. In this graph, two $d$-tuples are adjacent if and only if they differ in exactly $j$ coordinate positions. Additionally, the graphs $H(d,n)$ and $H(d,n,d)$ are the Cartesian and the tensor products of $d$ copies of the complete graph $K_n$, respectively. The Hamming graph $H(d,n)$ is distance-regular of diameter $d$, and has intersection array given by $b_i = (d-i)(n-1), \ \ c_i = i$ for all $i = 0,1,\ldots,d$ (see \cite[Theorem 9.2.1]{BCN}). In this section, we focus on studying $q(H(d,n))$. 

We begin with the case of $n=2$, that is, the hypercubes $H(d,2)$. As previously noted, $\dot{q}(H(d, 2)) = 2$ for all $d \geq 1$, as established in \cite{ahmadi2013minimum} and \cite{huang2019induced}. The proofs in both \cite{ahmadi2013minimum} and \cite{huang2019induced} proceed in a similar manner which is as follows. Given that $H(d,2) = H(d-1,2) \Osq K_2$ for $d\geq 2$, we can apply the construction method outlined in Section \ref{sec:weighing}. Starting with $M_1 = A(K_2)$, we then recursively construct the matrix 
\begin{align}\label{eqn:matrix_for_hypercube}
M_d = \begin{bmatrix}
    M_{d-1} & I\\
    I & -M_{d-1}
\end{bmatrix}
\end{align}
for $d\geq 2$. As a result, we obtain $M_d \in \dot{\mathcal{S}}(H(d,2))$ and $M_d^2 = d I$. Now, we aim to provide a new perspective on this construction. 

Let $\Delta$ be the clique complex of $K_{d \times 2}$. In Section \ref{sec:simp_comp}, we noted that $\mathcal{G}_{\text{dim}(\Delta)}^{\downarrow}$ is isomorphic to $H(d, 2)$ and $\mathcal{G}_{\text{dim}(\Delta)-1}^{\uparrow}$ is isomorphic to the line graph of $H(d, 2)$. We can represent the set of cliques of size $d$ in $K_{d \times 2}$ as $\mathcal{B}_d$, the set of binary words of length $d$. The set of cliques of size $d - 1$, denoted by $\mathcal{B}_d^{*}$, can be viewed as the set of words of length $d$ with symbols from $\{0, 1, *\}$, where exactly one $*$ appears, indicating the part excluded in a $d-1$-clique of $K_{d \times 2}$. We assume $\mathcal{B}_d^{*}$ is ordered lexicographically, with $0 < 1 < *$. Clearly, $|\mathcal{B}_d^{*}| = d2^{d-1}$. As an example, $\mathcal{B}_3^{*} = \{00*, 01*, 0\hspace{-2.5pt}*\hspace{-2.5pt}0, 0\hspace{-2pt}*\hspace{-2.5pt}1, 10*, 11*, 1\hspace{-2.5pt}*\hspace{-2.5pt}0, 1\hspace{-2.5pt}*\hspace{-2.5pt}1, *00, *01, *10, *11\}$.

For $d\geq 1$, we recursively construct two sequences of $\{0,\pm 1\}$-matrices, whose rows are indexed by $\mathcal{B}_d^{*}$ and columns by $\mathcal{B}_d$, where $(\alpha,\beta)$-th entry is  non-zero if and only if the clique corresponding to $\alpha$ is contained in the clique corresponding to $\beta$.  Let $E_1 = \begin{bmatrix}
    1 & 1
\end{bmatrix}$ and $F_1 = \begin{bmatrix}
    -1 & 1
\end{bmatrix}$. For $d\geq 2$, the recursive relations are given by:
\begin{align*}
    E_d = \begin{bmatrix}
        E_{d-1} & O\\
        O & F_{d-1}\\
        I_{2^{d-1}} & I_{2^{d-1}}
    \end{bmatrix} \text{ and }
    F_d = \begin{bmatrix}
        F_{d-1} & O\\
        O & E_{d-1}\\
        -I_{2^{d-1}} & I_{2^{d-1}}
    \end{bmatrix}.
\end{align*}
By induction on $d\geq 1$, we find that the matrix from Equation \ref{eqn:matrix_for_hypercube} satisfies $M_d = E_d^TE_d - dI = -(F_d^TF_d -dI) \in 
\dot{\mathcal{S}}(H(d,2))$. Note that $E_dE_d^T - 2I \in \dot{\mathcal{S}}(\mathcal{G}_{\text{dim}(\Delta)-1}^{\uparrow})$. Since the non-zero eigenvalues of the matrix $E_dE_d^T$ are same as that of the matrix $E_dE_d^T$ so we obtain the following result.
\begin{proposition}
    Let $d\geq 2$, and let $G$ be the line graph of the hypercube $H(d,2)$. Then $q(G) \leq \dot{q}(G) \leq 3$. \qed
\end{proposition}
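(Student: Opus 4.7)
The plan is to exhibit a signed adjacency matrix of $G$ with at most three distinct eigenvalues; then $\dot{q}(G) \leq 3$ follows, and the inequality $q(G) \leq \dot{q}(G)$ is immediate from $\dot{\mathcal{S}}(G) \subseteq \mathcal{S}(G)$. The natural candidate, already flagged just before the statement, is $N_d := E_d E_d^T - 2I$, so the proof reduces to verifying that $N_d \in \dot{\mathcal{S}}(G)$ and that $q(N_d) \leq 3$.

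First I would verify that $N_d$ really is a signed adjacency matrix of $G$. Two checks are needed. The first is that the diagonal of $E_d E_d^T$ equals $2I$; this amounts to showing that each row of $E_d$ has exactly two nonzero entries, which is the combinatorial fact that any clique of size $d-1$ in $K_{d\times 2}$ extends to exactly two $d$-cliques (one choice in the single omitted part). I would confirm this by induction using the recursive block form of $E_d$ (and the parallel recursion for $F_d$). The second check is that, for distinct $\alpha,\alpha' \in \mathcal{B}_d^{*}$, the entry $(E_d E_d^T)_{\alpha,\alpha'}$ lies in $\{-1,0,1\}$ and is nonzero exactly when the two corresponding $(d-1)$-cliques sit in a common $d$-clique; this holds because once the $*$-positions of $\alpha$ and $\alpha'$ are fixed, a common extension $\beta$ is uniquely determined, so the sum defining $(E_d E_d^T)_{\alpha,\alpha'}$ has at most one nonzero term of value $\pm 1$. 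This is precisely the adjacency rule for $\mathcal{G}_{\mathrm{dim}(\Delta)-1}^{\uparrow} \cong G$.

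Next I would pin down the spectrum of $N_d$. From the identity $M_d = E_d^T E_d - dI$ established just before the statement and $M_d^2 = dI$, the matrix $E_d^T E_d$ has exactly two eigenvalues $d+\sqrt{d}$ and $d-\sqrt{d}$, each with multiplicity $2^{d-1}$. Since $d \geq 2$ both are strictly positive, so $E_d$ has full column rank $2^d$. The standard fact that $E_d^T E_d$ and $E_d E_d^T$ share the same nonzero spectrum (with multiplicities) then forces $E_d E_d^T$ to have eigenvalues $d+\sqrt{d}$, $d-\sqrt{d}$, and $0$ with multiplicities $2^{d-1}$, $2^{d-1}$, and $(d-2)2^{d-1}$ respectively. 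Subtracting $2I$ yields at most three candidate eigenvalues $d+\sqrt{d}-2$, $d-\sqrt{d}-2$, and $-2$, giving $q(N_d) \leq 3$.

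The only genuinely delicate step is the combinatorial verification that $N_d \in \dot{\mathcal{S}}(G)$, since the spectral calculation is essentially a one-line consequence of the already-proved identity $M_d = E_d^T E_d - dI$ together with the SVD-type symmetry of the nonzero spectrum. I do not expect either step to present a real obstacle once the recursive definition of $E_d$ is unpacked, so the proof should be short.
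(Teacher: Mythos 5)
Your proposal is correct and follows essentially the same route as the paper: the paper likewise takes $E_dE_d^T - 2I$ as the signed adjacency matrix of the line graph, deduces the spectrum of $E_d^TE_d$ from the identity $M_d = E_d^TE_d - dI$ with $M_d^2 = dI$, and transfers the nonzero eigenvalues to $E_dE_d^T$. Your explicit verification that $E_dE_d^T - 2I \in \dot{\mathcal{S}}(G)$ (two extensions of each $(d-1)$-clique, at most one common extension of two distinct ones) is a detail the paper merely asserts, so your write-up is, if anything, more complete.
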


Now we focus on the graph $H(d,n)$ for $n\geq 3$. In the following theorem, we demonstrate that it may be sufficient to focus only on $H(d, 3)$ for the consideration of remaining Hamming graphs.

\begin{theorem}\label{thm_hamming_induced_graph}
    Let $n\geq 3$, $e\geq d\geq 2$ are positive integers. Suppose that the system of equations $f = 0$ for $f\in \Phi_d(H(d,3))$ has no solution in which all variables are non-zero. Then $q(H(e,n))\geq d+1$.  
\end{theorem}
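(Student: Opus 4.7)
The approach is to exhibit an induced copy of $H(d,3)$ inside $H(e,n)$ in which the distance-$d$ polynomials coincide, and then invoke Lemma \ref{lem:subgraph_no_zero_free_solution}.

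First, I would fix any $w \in Y_n^{e-d}$ and set $S := \{0,1,2\}^d \times \{w\} \subseteq Y_n^e$. Any two vertices of $S$ agree on the last $e-d$ coordinates, so they are adjacent in $H(e,n)$ if and only if they differ in exactly one of the first $d$ coordinates. Hence the subgraph of $H(e,n)$ induced by $S$ is isomorphic to $H(d,3)$, with the isomorphism sending $(x_1,\dots,x_d,w)$ to $(x_1,\dots,x_d)$.

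Second, I would show that $\Phi_d(H(d,3)) \subseteq \Phi_d(H(e,n))$ under this identification. Take $u,v \in \{0,1,2\}^d$ with $u_i \neq v_i$ for all $i$, which is exactly the condition for $u$ and $v$ to be antipodal in $H(d,3)$. In $H(e,n)$, the Hamming distance between $(u,w)$ and $(v,w)$ is also $d$, so every shortest path between them consists of exactly $d$ edges, each modifying a distinct coordinate. Any shortest path must change each of the first $d$ coordinates exactly once, directly from $u_i$ to $v_i$ (any detour through a third value, or any alteration of the last $e-d$ coordinates, would require additional edges and violate minimality). Consequently every intermediate vertex has its first $d$ coordinates in $\{u_i,v_i\} \subseteq \{0,1,2\}$ and its last $e-d$ coordinates equal to $w$, so the path lies entirely in $S$. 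Thus the shortest $(u,w)$--$(v,w)$ paths in $H(e,n)$ are in bijection with the shortest $u$--$v$ paths in $H(d,3)$, and the polynomial in $\Phi_d(H(e,n))$ attached to $\{(u,w),(v,w)\}$ is literally the polynomial in $\Phi_d(H(d,3))$ attached to $\{u,v\}$ (after identifying the edge variables of $S$ with the corresponding edges of $H(d,3)$).

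Third, having established $\Phi_d(H(d,3)) \subseteq \Phi_d(H(e,n))$ in this sense, the hypothesis that the system $\{f=0 : f \in \Phi_d(H(d,3))\}$ has no solution with all variables nonzero, combined with Lemma \ref{lem:subgraph_no_zero_free_solution}, immediately yields $q(H(e,n)) \geq d+1$.

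The only point requiring care is the verification that shortest paths in $H(e,n)$ do not leak out of $S$; this is the one step worth spelling out, and it follows from the Hamming-distance identity together with the observation that any deviation from the ``change each differing coordinate once, directly'' strategy strictly increases the path length.
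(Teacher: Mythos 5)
Your proposal is correct and follows essentially the same route as the paper: embed $H(d,3)$ as the induced subgraph on vertices with fixed last $e-d$ coordinates, check that $\Phi_d(H(d,3)) \subseteq \Phi_d(H(e,n))$, and conclude via Lemma \ref{lem:subgraph_no_zero_free_solution}. The only cosmetic difference is that you justify the preservation of shortest-path counts by a direct argument that shortest paths cannot leave the embedded copy, whereas the paper appeals to the intersection numbers $c_i = i$ of the Hamming scheme; both verifications are sound.
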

\begin{proof}
Consider the subset $U \subseteq V(H(e,n))$ 
consisting of all tuples where the first $d$ entries are from $\{0,1,2\}$ and the last $e-d$ entries are $0$. The induced subgraph on $U$ is $H(d,3)$. Any two vertices in $H(d, 3)$ that are at distance $d$ remain at the same distance $d$ when viewed in $H(e, n)$. Moreover, since for the Hamming graphs we have $c_i = i$ so the number of walks of length $d$ between any two vertices at distance $d$ in $H(d, 3)$ remains the same when counted in $H(e, n)$. Therefore, $\Phi_d(H(d, 3)) \subseteq \Phi_d(H(e, n))$. The result follows from Lemma \ref{lem:subgraph_no_zero_free_solution}.
\end{proof}

The following corollary demonstrates that only hypercubes $H(d,2)$ have exactly two distinct eigenvalues among the Hamming graphs $H(d,n)$.
\begin{cor}\label{cor:q(hamming)>3}
    Let $n\geq 3$ and $d\geq 2$. Then $q(H(d,n)) \geq 3$. In particular, we have $q(H(2,n)) = 3$.
\end{cor}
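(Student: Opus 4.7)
The proof plan is to chain together three results already established in the paper. First, I would invoke the computation for $H(2,3)$ carried out in Section \ref{sec:small_drg}: the Hamming graph $H(2,3)$ admits a decomposition into $s = 9$ induced $4$-cycles in which every edge lies on exactly $2$ of them (as pictured in Figure \ref{fig:H(2,3)}). This is precisely the hypothesis of Proposition \ref{prop:q(G)>t} with $j = 2$ (note $c_1 = 1$, $c_2 = 2$ for $H(2,3)$), so via Proposition \ref{prop:system_of_eqns} the system $\{f = 0 : f \in \Phi_2(H(2,3))\}$ has no solution with all variables nonzero.

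Next, I would apply Theorem \ref{thm_hamming_induced_graph} with its internal parameter equal to $2$: given any $e \ge 2$ and any $n \ge 3$, the above nonvanishing statement for $\Phi_2(H(2,3))$ forces $q(H(e,n)) \ge 3$. Renaming $e$ to $d$ yields the first assertion of the corollary, $q(H(d,n)) \ge 3$ for all $n \ge 3$ and $d \ge 2$.

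For the ``in particular'' statement, I use the general upper bound $q(G) \le \mathrm{diam}(G) + 1$ for a distance-regular graph $G$. Since $H(2,n)$ has diameter $2$, this gives $q(H(2,n)) \le 3$; combined with the lower bound $q(H(2,n)) \ge 3$ from the first part, I conclude $q(H(2,n)) = 3$ for all $n \ge 3$.

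The argument has essentially no obstacle once the two preparatory results---the explicit nonvanishing analysis of $\Phi_2(H(2,3))$ and the lifting theorem \ref{thm_hamming_induced_graph}---are in place; the only point worth flagging is that one must use the small case $H(2,3)$ (for which $c_2 = 2$) as the ``seed,'' rather than attempting a direct cycle-counting argument in $H(d,n)$ for large $n$, where the polynomials in $\Phi_2$ have many more than two monomials and Proposition \ref{prop:system_of_eqns} does not directly apply.
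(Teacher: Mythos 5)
Your proposal is correct and follows essentially the same route as the paper: the paper's proof likewise cites the Section \ref{sec:small_drg} analysis showing that $\Phi_2(H(2,3))$ admits no zero-free solution and then applies Theorem \ref{thm_hamming_induced_graph} with $d=2$. Your additional remark that the ``in particular'' statement follows from the upper bound $q(G)\le \mathrm{diam}(G)+1$ for distance-regular graphs is the intended (though unstated) justification in the paper as well.
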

\begin{proof}
    In Section \ref{sec:small_drg}, we showed that the system of equations $f=0$ for $f\in \Phi_2(H(2,3))$ has no solution in which all variables are non-zero. The result now follows from Theorem \ref{thm_hamming_induced_graph}.
\end{proof}

We note that the fact that $q(H(2,n)) = 3$ in Corollary \ref{cor:q(hamming)>3} also follows from \cite[Lemma 3.3]{barrett2023regular}. We suspect that $q(H(d,n)) = d+1$ for all $d\geq 3$ and $n\geq 3$. One approach to proving this is to establish the following Conjecture \ref{conj_about_hamming} and then apply Theorem \ref{thm_hamming_induced_graph}. Therefore, it is worth noting that a polynomial in $\Phi_d(H(d,3))$ corresponds to a $d$-dimensional hypercube $H(d,2)$ that is contained within $H(d,3)$. Conversely, each induced hypercube $H(d,2)$ of $H(d,3)$ corresponds to $2^{d-1}$ distinct polynomials in $\Phi_d(H(d,3))$, one for each antipodal pair. 
\begin{conjecture}\label{conj_about_hamming}
    Let $d\geq 3$. Then the system of equations $f = 0$ for $f\in \Phi_d(H(d,3))$ has no solution in which all variables are non-zero. 
\end{conjecture}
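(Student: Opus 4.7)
The plan is to show, by contradiction, that no zero-free assignment of real values to the edge variables $\{x_e : e \in E(H(d,3))\}$ can make every polynomial in $\Phi_d(H(d,3))$ vanish. The first simplification is a sub-hypercube decomposition: for any pair of vertices $(u,v)$ at distance $d$ in $H(d,3)$, all $d!$ shortest $u$-$v$ paths lie in the unique sub-hypercube $H_{u,v} \cong H(d,2)$ obtained by restricting coordinate $i$ to $\{u_i, v_i\}$, since any detour through the third coordinate value would require an extra flip. Consequently $\Phi_d(H(d,3))$ partitions into $3^d$ disjoint groups, each a copy of $\Phi_d(H(d,2))$, and a purported zero-free solution on $H(d,3)$ restricts to a zero-free solution on every sub-hypercube.

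Since $q(H(d,2)) = 2$ via Huang's matrix $M$ with $M^2 = dI$, a single sub-hypercube admits zero-free solutions, so the desired contradiction must come from the overlaps. Two sub-hypercubes $H$ and $H'$ differing only in the two-element value-set at a single coordinate $i$ share a face $F \cong H(d-1,2)$ whose edges appear in both $\Phi_d(H)$ and $\Phi_d(H')$. The core of the plan is to combine the three constraints coming from choosing $\{0,1\}$, $\{0,2\}$, and $\{1,2\}$ at coordinate $i$ into an identity that forces the vanishing of some face-edge variable. To make this ideal-theoretic, I would aim to generalize Proposition \ref{prop:system_of_eqns}: starting from a finite subset of $\Phi_d(H(d,3))$ supported on such a triangle of sub-hypercubes, iteratively combine polynomials by multiplicative/subtractive operations (mimicking the proof of Proposition \ref{prop:system_of_eqns}) to produce a polynomial of the form (monomial) $\times$ (sum of perfect-square monomials) inside the ideal they generate, which manifestly has no zero-free root.

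The main obstacle is that each polynomial in $\Phi_d(H(d,3))$ is a sum of $d! \geq 6$ coprime square-free monomials, whereas Proposition \ref{prop:system_of_eqns} is tailored to two-monomial polynomials and its elimination steps rely on this; a new combinatorial bookkeeping that preserves the parity of every variable multiplicity through many-monomial combinations appears necessary. As a plausible alternative route, I would attempt induction on $d$ via the factorization $H(d,3) = H(d-1,3) \Osq K_3$: splitting the adjacency matrix as $A = A_1 + A_2$ according to whether an edge flips coordinate $d$, the identity $(A^d)_{u,v} = \sum_{k=1}^d (A_1^{k-1} A_2 A_1^{d-k})_{u,v}$ at antipodal pairs $(u,v)$ suggests that the vanishing of $\Phi_d(H(d,3))$ should force a relation of $\Phi_{d-1}(H(d-1,3))$-type on some coordinate-$d$ layer, propagating the base case $q(H(2,3)) = 3$ (established in Section \ref{sec:small_drg}) up through all $d$. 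Either route appears to require a genuinely new idea, consistent with the statement being left as a conjecture.
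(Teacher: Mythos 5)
The statement you are addressing is left as a \emph{conjecture} in the paper: there is no proof there to compare against, and your proposal does not close the gap either. Your preliminary observations are correct and match the paper's own remarks preceding the conjecture: every geodesic between an antipodal pair of $H(d,3)$ stays inside the unique sub-hypercube $H(d,2)$ determined by that pair, so $\Phi_d(H(d,3))$ partitions into $3^d$ blocks of $2^{d-1}$ polynomials, each block a copy of $\Phi_d(H(d,2))$, and any obstruction must come from the fact that each edge lies in $2^{d-1}$ of these sub-hypercubes. But neither of your two routes is actually carried out. The ideal-theoretic route founders exactly where you say it does: Proposition \ref{prop:system_of_eqns} and the parity bookkeeping in its proof are built around generators that are sums of exactly two coprime square-free monomials (the $c_j=2$ situation exploited in Theorem \ref{thm_about_set_of_cycles}), and you supply no combination rule that controls variable multiplicities when each generator has $d!\geq 6$ monomials. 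The inductive route via $H(d,3)=H(d-1,3)\Osq K_3$ is likewise only a heuristic: the identity you write for $(A^d)_{u,v}$ is a sum of $d$ terms, and nothing explains why the simultaneous vanishing of these sums over all antipodal pairs forces some coordinate-$d$ layer to inherit a zero-free solution of the $(d-1)$-dimensional system --- a priori the individual layer contributions could be nonzero and cancel.

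One concrete caution for any future attempt: the argument must genuinely use the third symbol, since the identical polynomial system restricted to a single sub-hypercube \emph{does} admit a zero-free solution (Huang's signing gives $M^2=dI$ on $H(d,2)$). Your triangle-of-sub-hypercubes idea is the only place the proposal engages with this, and it is the least developed part. In short, the proposal is a reasonable research plan whose groundwork is sound, but it is not a proof; you correctly diagnose that a genuinely new idea is still required, which is consistent with the authors leaving the statement open.
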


Let us now consider the other graphs in the Hamming scheme. We start with the distance-$d$ graph $H(d,n,d)$ and its complement. 

\begin{theorem}\label{thm_q=2_for_tensor_of_complete_even_graph}
    Let $n_i \neq 4$ are $d$ even numbers for $1\leq i\leq d$. Then $q(K_{n_1}\times K_{n_2}\times \cdots \times K_{n_d}) = 2$. In particular, if $n\neq 4$ an even number, then $q(H(d,n,d))= 2$.  
\end{theorem}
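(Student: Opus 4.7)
The plan is to leverage the two lemmas quoted in the preliminaries in a very direct way. First observe that $H(d,n,d)$ is, by definition, the graph whose adjacency corresponds to differing in every coordinate, which is precisely the tensor product $K_n \times K_n \times \cdots \times K_n$ of $d$ copies of $K_n$. So the second statement is an immediate specialization of the first, and the whole problem reduces to producing a suitable matrix in $\mathcal{S}(K_{n_1}\times K_{n_2}\times \cdots \times K_{n_d})$ with two distinct eigenvalues.

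For each $i$, since $n_i$ is even and $n_i \neq 4$, Lemma \ref{prop_from_Bailey} supplies a symmetric $OMZD(n_i)$, call it $A_i$. By definition $A_i$ is symmetric, has zero diagonal, has all off-diagonal entries nonzero, and satisfies $A_i^2 = c_i I$ for some $c_i > 0$. Since every off-diagonal entry of $A_i$ is nonzero and the diagonal is zero, $A_i \in \mathcal{S}(K_{n_i})$ with zero diagonal.

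Now set $A := A_1 \otimes A_2 \otimes \cdots \otimes A_d$. Iterated application of Lemma \ref{prop_from_Bjorkman} shows that $A \in \mathcal{S}(K_{n_1}\times K_{n_2}\times \cdots \times K_{n_d})$ with zero diagonal; concretely, the $((u_1,\dots,u_d),(v_1,\dots,v_d))$-entry of $A$ is $\prod_i A_i(u_i,v_i)$, which is nonzero precisely when $u_i \neq v_i$ for every $i$, matching the edge relation of the tensor product. The matrix $A$ is symmetric, since each $A_i$ is, and
\begin{equation*}
A^2 \;=\; \bigotimes_{i=1}^{d} A_i^2 \;=\; \bigotimes_{i=1}^{d} c_i I \;=\; \Bigl(\prod_{i=1}^d c_i\Bigr) I.
\end{equation*}
Hence $A^2 = cI$ with $c > 0$, so the only eigenvalues of $A$ are $\pm\sqrt{c}$; both occur because $\operatorname{trace}(A)=0$ and $A\neq 0$. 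Therefore $q(K_{n_1}\times \cdots \times K_{n_d}) = 2$, and specializing to $n_1=\cdots=n_d=n$ yields $q(H(d,n,d))=2$.

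There is no real obstacle here: the construction is a clean tensor product once the existence of symmetric $OMZD(n_i)$ is granted by Lemma \ref{prop_from_Bailey} and the zero pattern is tracked via Lemma \ref{prop_from_Bjorkman}. The only minor points to be careful about are checking that $A$ has exactly the zero pattern of the tensor-product graph (needed to conclude $A \in \mathcal{S}(K_{n_1}\times\cdots\times K_{n_d})$) and that both eigenvalues $\pm\sqrt{c}$ are actually realized so that $q(A)$ equals $2$ rather than $1$; both are handled in a line.
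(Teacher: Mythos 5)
Your proof is correct and follows essentially the same route as the paper: both invoke Lemma \ref{prop_from_Bailey} to obtain a symmetric $OMZD(n_i)$ for each factor, form the Kronecker product, and use Lemma \ref{prop_from_Bjorkman} to match the zero pattern of the tensor-product graph. The only cosmetic difference is that the paper rescales each factor to have eigenvalues $\pm 1$ while you compute $A^2 = cI$ directly and note that both of $\pm\sqrt{c}$ occur via the trace; this is a slightly more explicit justification of the same step.
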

\begin{proof}
By Lemma \ref{prop_from_Bailey}, there exists a matrix $B_i$ with zeros on the diagonals such that $B_i \in S(K_{n_i})$ for all $1\leq i \leq d$. By scaling, we can assume that the distinct eigenvalues of each $B_i$ are $\pm 1$. Consequently, the distinct eigenvalues of the matix $B = B_1 \otimes B_2 \otimes \cdots \otimes B_d$ are $\pm 1$. Now, Lemma \ref{prop_from_Bjorkman} implies that $B \in \mathcal{S}(K_{n_1}\times K_{n_2}\times \cdots \times K_{n_d})$, establishing the first statement. The second statement follows directly from the first, since $H(d,n,d)$ is the tensor product $K_{n}\times K_{n}\times \cdots \times K_{n}$.
\end{proof}

To establish our last two theorems, we use Theorem \ref{thm:sym_scheme_idem}, and utilize the expression for the first eigenmatrix of the Hamming scheme (see Delsarte \cite[p.\ 39]{delsarte1973algebraic}). For all $0\leq i\leq d$ and $0\leq j\leq d$, the entry $P_{j}(i)$ of the first eigenmatrix of the Hamming scheme is given by
\begin{align}\label{eq:hamming_eigenmatrix}
    P_{j}(i) &= \sum_{h=0}^j (-1)^h(n-1)^{j-h}\binom{i}{h}\binom{d-i}{j-h}.
\end{align}
Moreover, the multiplicity $m_i = (n-1)^i\binom{d}{i}$ for all $0\leq i \leq d$ (see \cite[Theorem 9.2.1]{BCN}). 

\begin{theorem}\label{thm_q=2_for_complement_of_H(d,q,d)}
    Let $n\geq 3$ and $d$ is odd. Then $q\left(\overline{H(d,n,d)}\right) = 2$. 
\end{theorem}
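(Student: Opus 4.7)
The plan is to apply Theorem \ref{thm:sym_scheme_idem} to the Hamming scheme. The complement $\overline{H(d,n,d)}$ has adjacency matrix $A_1+A_2+\cdots+A_{d-1}$ in the Hamming scheme, since $I+A_1+\cdots+A_d=J$. So I must find $I\subset\{0,1,\ldots,d\}$ so that the corresponding idempotent $E_I$ has nonzero $A_j$-coefficient exactly for $j\in\{0,1,\ldots,d-1\}$. My choice will be $I=\{0,d\}$, and the whole proof reduces to an explicit evaluation of $E_0+E_d$.

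First I would use Equation \eqref{eq:hamming_eigenmatrix} and $m_i=(n-1)^i\binom{d}{i}$ to compute the key column $P_d(i)$. For $j=d$ the inner sum $\sum_{h=0}^d(-1)^h(n-1)^{d-h}\binom{i}{h}\binom{d-i}{d-h}$ collapses, because $\binom{i}{h}=0$ for $h>i$ and $\binom{d-i}{d-h}=0$ for $h<i$, leaving only $h=i$ and giving $P_d(i)=(-1)^i(n-1)^{d-i}$. Combined with $P_d(0)=(n-1)^d$, this yields
\begin{align*}
\frac{m_i P_d(i)}{P_d(0)} = (-1)^i\binom{d}{i}.
\end{align*}
Since $d$ is odd, $\binom{d}{0}+(-1)^d\binom{d}{d}=1-1=0$, which is exactly the condition that the $A_d$-coefficient of $E_{\{0,d\}}$ vanishes.

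Next I would compute the $A_j$-coefficient of $E_{\{0,d\}}=E_0+E_d$ for every $j$. Since $E_0=\frac{1}{n^d}J=\frac{1}{n^d}\sum_j A_j$ and $E_d=\sum_j(-1)^j\frac{(n-1)^{d-j}}{n^d}A_j$ (by the same collapsing computation applied to each row, using the formula \eqref{eqn:scheme_idemp}), the coefficient of $A_j$ is
\begin{align*}
\frac{1+(-1)^j(n-1)^{d-j}}{n^d}.
\end{align*}
For $j=d$ this is zero as noted. For $1\le j\le d-1$, since $n\ge 3$ gives $(n-1)^{d-j}\ge 2$, the quantity $1\pm (n-1)^{d-j}$ is never zero. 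Thus $E_{\{0,d\}}$ has zero/nonzero off-diagonal pattern identical to that of $A(\overline{H(d,n,d)})$, so $E_{\{0,d\}}\in\mathcal{S}(\overline{H(d,n,d)})$, and being an idempotent it has only the two eigenvalues $0$ and $1$. Therefore $q(\overline{H(d,n,d)})=2$.

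The computations are all elementary; the only real step requiring care is the identification of the right subset $I$. The parity of $d$ is precisely what forces the $A_d$-coefficient to cancel between $E_0$ and $E_d$, and the hypothesis $n\ge 3$ (so that $n-1\ne 1$) is exactly what prevents accidental cancellations at the intermediate indices $j=1,\ldots,d-1$. No deeper obstacle is anticipated.
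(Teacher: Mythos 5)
Your proposal is correct and follows essentially the same route as the paper: both apply Theorem \ref{thm:sym_scheme_idem} to the Hamming scheme with $I=\{0,d\}$ and $J=\{1,\ldots,d-1\}$, reducing everything to the computation $1+(-1)^j(n-1)^{d-j}$, which vanishes exactly at $j=d$ when $d$ is odd and is nonzero for $1\le j\le d-1$ when $n\ge 3$. The only cosmetic difference is that you write out the idempotent $E_0+E_d$ explicitly, whereas the paper computes $P_j(0)$ and $P_j(d)$ and cites the theorem directly.
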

\begin{proof}
By using Equation \ref{eq:hamming_eigenmatrix}, we obtain that $P_j(d) = (-1)^j\binom{d}{j}$ and $P_j(0) = (n-1)^j\binom{d}{j}$ for all $0\leq j \leq d$. Thus, $\frac{m_0P_j(0)}{P_j(0)} + \frac{m_dP_j(d)}{P_j(0)} = 1 + (-1)^j(n-1)^{d-j}$ for all $0\leq j \leq d$. If 
$n\geq 3$ and $d$ is odd, then $1 + (-1)^j(n-1)^{d-j} \neq 0$ exactly when $0\leq j\leq d-1$. Therefore, the statement follows by applying Theorem \ref{thm:sym_scheme_idem} with $I = \{0,d\}$ and $J = \{1,2,\ldots,d-1\}$.    
\end{proof}

We now turn our attention to specific graphs in the Hamming scheme $H(d,2)$. Let $d\geq 3$, $0\leq j\leq d$, and $t \in \{0,1,2\}$. We need to evaluate the sum given by
\begin{align}\label{eqn_binomial_sum_for_hypercubes}
    \zeta(d,j,t)\ := \sum_{\substack{i=0 \\ i\ \equiv\ t\hspace{-7pt}\pmod{3}}}^d \binom{d}{i}\sum_{h=0}^{j}(-1)^h\binom{i}{h}\binom{d-i}{j-h}.
\end{align}
Let $\omega\neq 1$ be a cube root of unity. Then
\begin{align*}
    \zeta(d,0,0) &= \sum_{\substack{i=0 \\ i\ \equiv\ 0\hspace{-7pt}\pmod{3}}}^d \binom{d}{i} = \frac{(1+1)^d+(1+\omega)^d+(1+\omega^2)^d}{3} = \frac{2^d+(-1)^d(\omega^{2d}+\omega^d)}{3},\\
    \zeta(d,0,1) &= \sum_{\substack{i=0 \\ i\ \equiv\ 1\hspace{-7pt}\pmod{3}}}^d \binom{d}{i} = \frac{(1+1)^d+\omega^2(1+\omega)^d+\omega(1+\omega^2)^d}{3}= \frac{2^d+(-1)^d(\omega^{2d+2}+\omega^{d+1})}{3},\\
    \zeta(d,0,2) &= \sum_{\substack{i=0 \\ i\ \equiv\ 2\hspace{-7pt}\pmod{3}}}^d \binom{d}{i} = \frac{(1+1)^d+\omega(1+\omega)^d+\omega^2(1+\omega^2)^d}{3} = \frac{2^d+(-1)^d(\omega^{2d+1}+\omega^{d+2})}{3}.
\end{align*}
For $j\geq 1$, by using Pascal's binomial identity $\binom{n}{k} = \binom{n-1}{k-1} + \binom{n-1}{k}$, we obtain the following recursion: 
\begin{align*}
    \zeta(d,j,t) = \zeta(d-1,j,t)+\zeta(d-1,j-1,t)+\zeta(d-1,j,t-1)-\zeta(d-1,j-1,t-1).
\end{align*}
Note that in the above recursion $t-1$ is modulo $3$. Using the equations above and applying the method of induction, we arrive at the following lemma. To maintain brevity, we omit the standard, yet lengthy, proof.

\begin{lemma}\label{lem_binomial_sum_for_hypercubes}
    Let $d\geq 3$, $0\leq j\leq d$, and $t \in \{0,1,2\}$. Suppose $d = 3r+s$ where $r\geq 1$ and $s \in \{-2,0,2\}$. Let us define $\kappa(d,j) = (-3)^{\left\lceil\frac{j}{2}\right\rceil-1}\binom{d}{j}$. Then, the following statements hold: 
    \begin{enumerate}
        \item If $(s,t) \in \{(-2,1),(2,0),(0,2)\}$, then $\zeta(d,0,t) = (2^d-(-1)^d)/3$ and $\zeta(d,j,t) = (-1)^r\kappa(d,j)$ for all $j\geq 1$.
        \item If $(s,t) \in \{(-2,0),(2,2),(0,1)\}$, then $\zeta(d,0,t) = (2^d-(-1)^d)/3$ and $\zeta(d,j,t) = (-1)^{r+j}\kappa(d,j)$ for all $j\geq 1$.
        \item If $(s,t) \in \{(-2,2),(2,1),(0,0)\}$ and $j$ is even, then $\zeta(d,0,t) = (2^d+(-1)^d 2)/3$ and $\zeta(d,j,t) = (-1)^{r+1}2\kappa(d,j)$ for all $j\geq 2$.
        \item If $(s,t) \in \{(-2,2),(2,1),(0,0)\}$ and $j$ is odd, then $\zeta(d,j,t) = 0$. \qed
    \end{enumerate}
\end{lemma}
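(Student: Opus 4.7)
The plan is to induct on $d$, holding $j$ variable and using the four-term recursion
\[
\zeta(d,j,t) \;=\; \zeta(d-1,j,t)+\zeta(d-1,j-1,t)+\zeta(d-1,j,t-1)-\zeta(d-1,j-1,t-1)
\]
already derived in the text from Pascal's identity. The base case $j=0$ is handled directly by the three roots-of-unity expressions computed just above the statement: one checks case by case which of the values $(2^d-(-1)^d)/3$ or $(2^d+2(-1)^d)/3$ appears as a function of $d\bmod 3$ and $t\bmod 3$, which is precisely the split recorded in parts (1)–(3) at $j=0$ (part (4) being vacuous there).

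For the inductive step I would fix $d\geq 3$, write $d = 3r+s$ with $s\in\{-2,0,2\}$, and analyze how the residues shift: if $d\equiv 0\pmod 3$ then $d-1 = 3(r-1)+2$, if $d\equiv 1$ then $d-1 = 3r$, and if $d\equiv 2$ then $d-1 = 3r-2$. For each of the nine combinations of $d\bmod 3$ and $t\bmod 3$, the four terms on the right of the recursion each fall into one of cases (1)–(4) at $d-1$, and substituting the inductive hypothesis rewrites the recursion as a linear combination of $\kappa(d-1,j)$ and $\kappa(d-1,j-1)$ (with appropriate signs $(-1)^{r}$ or $(-1)^{r\pm 1}$). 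One then collapses the result using the Pascal-type identities
\[
\kappa(d,j) = \kappa(d-1,j)+\kappa(d-1,j-1) \quad (j\text{ even}),\qquad \kappa(d,j) = \kappa(d-1,j) - 3\,\kappa(d-1,j-1) \quad (j\text{ odd}),
\]
which follow from $\binom{d}{j} = \binom{d-1}{j}+\binom{d-1}{j-1}$ together with the behavior of the ceiling $\lceil j/2\rceil$ in the exponent of $-3$.

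I expect the main obstacle to be the bookkeeping at $j=1$, since the recursion then calls on $\zeta(d-1,0,t)$, whose value has the exceptional form $(2^{d-1}\pm(-1)^{d-1})/3$ rather than the clean $\pm\kappa$ expression. Here one must verify that the $2^{d-1}$ contributions from the two $\zeta(d-1,0,t)$ terms and the two $\zeta(d-1,0,t-1)$ terms cancel (this happens because they enter the recursion with a $+,-$ sign pattern) and that the $(-1)^{d-1}$ contributions combine to give exactly $(-1)^r \kappa(d,1) = (-1)^r d$ or $(-1)^{r+1}d$ or $0$ as dictated by the case. A secondary subtlety is the diagonal cases (3)–(4), where two of the four inductive terms yield $0$ (from case (4) at the previous step) and the other two yield the doubled value $(-1)^{r+1}2\kappa(d-1,\cdot)$ from case (3); one must check the parity of $j$ carefully so that the doubling is preserved or collapses to $0$ according to whether the new $j$ is even or odd. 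The remaining non-diagonal cases (1) and (2) are mechanical: two terms of the recursion come from case (1) or (2) and two from case (3) or (4), and after substitution the zeros from case (4) drop out and the Pascal identity for $\kappa$ closes the argument.
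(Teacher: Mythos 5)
Your plan coincides with the paper's (omitted) proof: the lemma is proved by induction on $d$ via the stated four-term recursion, with the $j=0$ roots-of-unity evaluations as base cases, and your Pascal-type identities $\kappa(d,j)=\kappa(d-1,j)+\kappa(d-1,j-1)$ ($j$ even) and $\kappa(d,j)=\kappa(d-1,j)-3\kappa(d-1,j-1)$ ($j$ odd), together with the case bookkeeping governed by $(t+d)\bmod 3$ and your treatment of the $j=1$ cancellation, do close the induction (I verified representative cases). The only defects are two arithmetic slips in the residue shift --- if $d\equiv 1\pmod 3$ then $d-1=3(r-1)$, not $3r$, and if $d\equiv 2\pmod 3$ then $d-1=3(r+1)-2$, not $3r-2$ --- which must be corrected, since otherwise the signs $(-1)^{r'}$ pulled from the inductive hypothesis come out wrong in those branches.
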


We use Equation \ref{eq:hamming_eigenmatrix}, Lemma \ref{lem_binomial_sum_for_hypercubes} and Theorem \ref{thm:sym_scheme_idem} in the following theorem to prove that certain graphs in the Hamming scheme $H(d,2)$ has two distinct eigenvalues. 

\begin{theorem}\label{thm_q=2_for_hypercube_scheme}
    Let $d\geq 3$. Suppose $d = 3r+s \geq 3$ where $r\geq 1$. Then the following statements hold:
    \begin{enumerate}
        \item If $r$ is even and $s\in \{0,1,2\}$, then $q\left(\overline{H(d,2)}\right) = 2$.
        \item If $r$ is odd and $s\in \{0,1,2\}$, then $q\left(\overline{H(d,2,2)}\right) = 2$.
        \item If $r$ is odd and $s\in \{-2,-1,0\}$, then $q\left(\overline{H(d,2)\cup H(d,2,2)}\right) = 2$.
    \end{enumerate}
\end{theorem}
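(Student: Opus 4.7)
The plan is to apply Theorem \ref{thm:sym_scheme_idem} to the Hamming scheme of $H(d,2)$, exhibiting in each case an idempotent of the Bose--Mesner algebra with two distinct eigenvalues that lies in $\mathcal{S}(G)$ for the graph $G$ in question. Writing $I_t := \{i \in \{0,1,\ldots,d\} : i \equiv t \pmod 3\}$ for $t = 0, 1, 2$, the key observation is that Lemma \ref{lem_binomial_sum_for_hypercubes} evaluates $\zeta(d,j,t) = \sum_{i \in I_t} m_i P_j(i)$ in closed form, and the sums for our candidate subsets $I$ can all be written as $\zeta(d,j,t)$ plus a boundary correction of either $\binom{d}{j}$ (from adjoining $i=0$ when $0 \notin I_t$) or $-(-1)^j\binom{d}{j}$ (from removing $i=d$ when $d \in I_t$).

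In each of the nine combinations of (part, $d \bmod 3$) we take $I$ of one of the two forms $\{0\} \cup I_t$ or $I_t \setminus \{d\}$ for a specific $t$. Concretely, for part (1) the choice is $\{0\} \cup I_1$ if $d \equiv 0 \pmod 3$, $I_1 \setminus \{d\}$ if $d \equiv 1 \pmod 3$, and $\{0\} \cup I_2$ if $d \equiv 2 \pmod 3$. For part (2) the same three subsets appear, now under the opposite parity of the integer $r'$ coming from the lemma's representation $d = 3r' + s'$ with $s' \in \{-2, 0, 2\}$. For part (3) we use $\{0\} \cup I_2$, $\{0\} \cup I_1$, and $I_2 \setminus \{d\}$, respectively, for the three residues of $d \bmod 3$. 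This $r'$ agrees with the theorem's $r$ up to a shift of $\pm 1$ depending on $s$, and this shift is exactly the reason for the particular parity hypotheses in the three parts.

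After substituting the formulas from Lemma \ref{lem_binomial_sum_for_hypercubes} together with $\kappa(d,j) = (-3)^{\lceil j/2 \rceil - 1}\binom{d}{j}$, the sum $\sum_{i \in I} m_i P_j(i)$ for $j \geq 1$ becomes $\binom{d}{j}$ times a factor of the form $1 \pm (-1)^{r'}(-3)^{\lceil j/2 \rceil - 1}$ or $(-1)^j \pm (-1)^{r'}(-3)^{\lceil j/2 \rceil - 1}$. The decisive numerical fact is that $(-3)^k \in \{-1, 1\}$ only at $k = 0$, so any such factor can vanish at most at $j = 1$ (odd branch, from $\lceil j/2 \rceil - 1 = 0$) or $j = 2$ (even branch, from $j/2 - 1 = 0$). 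The parity conditions on $r$ in the three parts translate exactly into the parity of $r'$ required to force the vanishing set to be $\{1\}$, $\{2\}$, or $\{1,2\}$, which give precisely the zero-patterns corresponding to $J = \{2,\ldots,d\}$, $J = \{1,3,\ldots,d\}$, and $J = \{3,\ldots,d\}$.

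The main obstacle is the bookkeeping across the nine cases: one must track the correspondence between the theorem's $(r,s)$ and the lemma's $(r',s')$, identify in each residue of $d \bmod 3$ which of the two non-special mod-$3$ classes (corresponding to cases (1) and (2) of Lemma \ref{lem_binomial_sum_for_hypercubes}) is to play the role of $I_t$, and verify that the parity of $r'$ lines up so that the vanishing of $\sum_{i\in I} m_i P_j(i)$ occurs on exactly $\{1,\ldots,d\} \setminus J$. Once these alignments are in place, Theorem \ref{thm:sym_scheme_idem} produces the required idempotent $E_I \in \mathcal{S}(G)$ with spectrum $\{0,1\}$, and hence $q(G) = 2$ in all three parts.
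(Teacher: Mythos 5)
Your proposal is correct and follows essentially the same route as the paper: apply Theorem \ref{thm:sym_scheme_idem} with $I$ built from a residue class modulo $3$ (adjusted by a boundary element at $i=0$ or $i=d$), evaluate $\sum_{i\in I} m_i P_j(i)/P_j(0)$ via Lemma \ref{lem_binomial_sum_for_hypercubes}, and observe that the factor can only vanish when $\lceil j/2\rceil - 1 = 0$. The only deviation is cosmetic: in a couple of the nine subcases you remove $i=d$ from $I_1$ or $I_2$ where the paper removes $i=0$ from $I_0$, and both choices yield the required vanishing set.
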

\begin{proof}
    Define the sets $I_t := \{i\ |\ 0\leq i \leq d,\ i \equiv t\hspace{-3pt} \pmod{3}\}$ for $t\in \{0,1,2\}$. From Equation \ref{eq:hamming_eigenmatrix}, it follows that $\sum_{i\in I_t}\frac{m_iP_{j}(i)}{P_j(0)} = \frac{\zeta(d,j,t)}{\binom{d}{j}}$ for any $0\leq j\leq d$. In what follows, we explicitly list the sets $I$ and $J$ to apply Theorem \ref{thm:sym_scheme_idem}.
    \begin{enumerate}
        \item For $s \in \{0, 1, 2\}$, the corresponding $I$ sets are given by $I_1 \cup \{0\}$, $I_0 \setminus \{0\}$, and $I_2 \cup \{0\}$, respectively, and the set $J$ is defined as $J = [d]/\{1\}$. Now, use the part $(2)$ of Lemma \ref{lem_binomial_sum_for_hypercubes}.
        \item For $s \in \{0, 1, 2\}$, the corresponding $I$ sets are given by $I_1 \cup \{0\}$, $I_0 \setminus \{0\}$, and $I_2 \cup \{0\}$, respectively, and the set $J$ is defined as $J =[d]/\{2\}$. Now, use the part $(2)$ of Lemma \ref{lem_binomial_sum_for_hypercubes}.
        \item For $s \in \{-2, -1, 0\}$, the corresponding $I$ sets are given by $I_1 \cup \{0\}$, $I_0 \setminus \{0\}$, and $I_2 \cup \{0\}$, respectively, and the set $J$ is defined as $J =[d]/\{1,2\}$. Now, use the part $(1)$ of Lemma \ref{lem_binomial_sum_for_hypercubes}.
    \end{enumerate}
    Hence, the proof is complete.
\end{proof}

\section{Summary and further questions}\label{sec:summary}
In summation, our aim in this work was to present a careful study of the minimum number of distinct eigenvalues (or $q(G)$) for strongly-regular and distance-regular graphs $G$. The parameter $q(G)$ has become one of the most studied parameters under the umbrella of the inverse eigenvalue problem for graphs. A particularly well-studied aspect concerning this graph parameter is the case when $q(G)=2$, where a formal characterization is still unresolved. Furthermore, research on the parameter $q$ while imposing certain structural constraints on a graph (for e.g., regular, bipartite, edge density, etc.) is not only a natural progression but is also beneficial to the broader inverse eigenvalue problem for graphs community. To this end, we have analyzed a wealth of strongly-regular and distance-regular graphs and in many cases, have computed the minimum number of distinct eigenvalues allowed by such graphs. 

Our work is essentially divided into three parts. We begin our investigation by signaling out such graphs with specific properties, including graphs derived from association schemes, certain distance regular graphs on a small number of vertices and conclude this topic with a reflection on simplicial complexes, which play a role in later sections. The next phase of our work concentrates on the family of Johnson graphs, $J(n,d)$. Our main observation in this section is that $q(J(n,d))=\dot{q}(J(n,d))=2$, but we also consider the minimum rank for such graphs also. In addition, we also lay out a myriad of related interesting advances and discussion involving weighing matrices, linear ternary codes, and tight frame graphs. Finally we consider the class of Hamming graphs, $H(d,n)$. For such graphs we verify cases where $q=2$ and when $q$ is necessarily larger than two. We also include an intriguing discussion when the complements of such graphs have $q$ equal to two. We conclude by presenting several questions that we believe deserve further investigation.

\textbf{Questions related to strongly-regular graphs:} A strongly-regular graph $G$ is called \emph{primitive} if both $G$ and its complement are connected. The Hamming $H(2,n)$ is strongly-regular with $q=3$, and its complement $H(2,n,2)$, has $q=2$ for even $n\geq 6$ (see Theorem \ref{thm_q=2_for_tensor_of_complete_even_graph}). Both the $5$-cycle and the Hamming $H(2,3)$ are self-complementary with $q=3$. The question of whether a primitive strongly-regular graph $G$ exists with $q(G)=q(\overline{G})=2$ remains open.
 
Let $t\equiv 3 \pmod{4}$ be a prime power. The Paley graph $P(t)$ is the graph whose vertices are the elements of the finite field $\mathbb{F}_t$ and where two vertices $a,b$ are adjacent if and only if $a-b$ is a non-zero square in $\mathbb{F}_t$. The Paley graph $P(t)$ is self-complementary strongly-regular graph. The graphs $P(5)$ and $P(9)$ are $C_5$ and $H(2,3)$, respectively. A open question is whether $q(P(t))$ is $2$ or $3$ for $t\geq 13$.
 
There are seven known examples of triangle-free strongly regular graphs: the $5$-cycle, the Petersen graph, the Clebsch graph, the Hoffman-Singleton graph, the Gewirtz graph, the $77$-graph, and the Higman-Sims graph. For the $5$-cycle, the Petersen graph, and the Hoffman-Singleton graph, it is known that $q = 3$, as for all these graphs, $c_2 = \mu = 1$. For the Clebsch graph, $q = 2$, and for the 77-graph, $q = 3$ (see Table \ref{table1}). The open question remains: to find the $q$-values for the Gewirtz and the Higman-Sims graphs?

The three Chang graphs are strongly-regular graphs, with the same parameters as that of Johnson $J(8,2)$ (see, for e.g., \cite[Section 9.2]{brouwer2011spectra}). We think it would be interesting to determine the $q$-values for the Chang graphs. 

\textbf{Questions related to Johnson graphs:} The general question is to determine the $q$-values of other graphs in the Johnson scheme. Specifically, we have shown that $q(\overline{J(3d - 2, d)}) = 2$, but determining $q(\overline{J(n, d)})$ for general $n$ and  $d$ remains unresolved.

An important family of graphs is the \emph{Kneser graph} $K(n, d)$, which is the distance-$d$ graph of the Johnson graph $J(n, d)$. The Kneser graph $K(n, d)$ is triangle-free if and only if $2d < n < 3d$. Hence, if $\binom{n}{d}$ is odd and $2d < n < 3d$, then $q(K(n, d)) \geq 3$. Moreover, it is known that $K(2d + 1, d)$, the \emph{odd graph}, is a distance-regular graph of diameter $d$ with $c_2 = 1$, so $q(K(2d+1, d)) \geq 3$. This raises the broader question of determining $q(K(n, d))$ for general $n$ and $d$.  

The structural and spectral properties of the Johnson graph $J(n, d)$ have been studied extensively from various perspectives. Therefore, it is also valuable to investigate the graphs $J_{+}(n, d)$, $J_{-}(n, d)$, and the 2-lift associated with $\dot{J}(n, d)$ (see Section \ref{section_Johnson} for definitions) from different viewpoints.

\textbf{Questions related to Hamming graphs:}
We have already proposed Conjecture \ref{conj_about_hamming}. The next general question is to determine the $q$-values of other graphs in the Hamming scheme. Note that $q(H(3, 2)) = q(K_4 \square K_2) = 2$ follows from \cite[Corollary 6.8]{ahmadi2013minimum}. However, this result is not covered by Theorem \ref{thm_q=2_for_hypercube_scheme}. A natural question that arises is whether $q = 2$ for the complements of all hypercube graphs $H(d, 2)$. Additionally, another intriguing question is whether $q = 2$ holds for some of the remaining cases not addressed by Theorem \ref{thm_q=2_for_tensor_of_complete_even_graph} and Theorem \ref{thm_q=2_for_complement_of_H(d,q,d)}.

\section*{Acknowledgements}
This project started in January of 2024 as part of the Discrete Mathematics Research Group at the University. The authors thank the other members of this group: Karen Meagher, Seyed Ahmad Mojallal, Shahla Nassersar, Venkata Pantangi for their thoughtful discussions. H.\ Gupta would like to express gratitude to Ferdinand Ihringer, Jack Koolen, and Akihiro Munemasa for their insightful discussions regarding this work during the ``Graphs and Groups, Complexity and Convexity (G2C2) 2024'' conference, held at Hebei Normal University, Shijiazhuang, China.

S.M.\ Fallat was supported in part by an NSERC Discovery Research Grant, Application No.: RGPIN-2019-03934. The work of the PIMS Postdoctoral Fellow H.\ Gupta leading to this publication was supported in part by the Pacific Institute for the Mathematical Sciences. A.\ Herman was supported in part by an NSERC Discovery Development Grant, Application No.: DDG-2023-00014.

\end{document}